\newcommand*\patchAmsMathEnvironmentForLineno[1]{%
  \expandafter\let\csname old#1\expandafter\endcsname\csname #1\endcsname
  \expandafter\let\csname oldend#1\expandafter\endcsname\csname end#1\endcsname
  \renewenvironment{#1}%
     {\linenomath\csname old#1\endcsname}%
     {\csname oldend#1\endcsname\endlinenomath}}%
\newcommand*\patchBothAmsMathEnvironmentsForLineno[1]{%
  \patchAmsMathEnvironmentForLineno{#1}%
  \patchAmsMathEnvironmentForLineno{#1*}}%
\newtheorem{theorem}{Theorem}[section]
\newtheorem{lemma}[theorem]{Lemma}
\theoremstyle{remark}
\newcommand{\bbP}{\mathbb{P}}
\newcommand{\bfk}{\mathbf{k}}
\newcommand{\bfn}{\mathbf{n}}
\newcommand{\bfh}{\mathbf{h}}
\newcommand{\bfII}{\mathbf{2}}
\newcommand{\bfV}{\mathbf{5}}
\newcommand{\cgB}{\mathcal{B}}
\newcommand{\cgL}{\mathcal{L}}
\newcommand{\cgM}{\mathcal{M}}
\newcommand{\cgP}{\mathcal{P}}
\newcommand{\cgR}{\mathcal{R}}
\newcommand{\cgU}{\mathcal{U}}
\newcommand{\Inc}{\operatorname{Inc}}
\newcommand{\Int}{\operatorname{Int}}
\newcommand{\Max}{\operatorname{Max}}
\newcommand{\Min}{\operatorname{Min}}
\newcommand{\ldim}{\operatorname{ldim}}
\newcommand{\bdim}{\operatorname{bdim}}
\newcommand{\st}{\operatorname{stack}}
\newcommand{\ple}{\operatorname{ple}}
\newcommand{\Ram}{\operatorname{Ram}}
\newcommand{\PRam}{\operatorname{PRam}}
\newcommand{\BTRam}{\operatorname{BTRam}}
\newcommand{\tw}{\operatorname{tw}}
\newcommand{\pw}{\operatorname{pw}}
\newcommand{\dist}{\operatorname{dist}}
\let\emptyset\varnothing
\begin{document}
\reversemarginpar

\title[Comparing Dimension and Two Variations]{Comparing Dushnik-Miller Dimension,\\
  Boolean Dimension and Local Dimension}

\author[Barrera-Cruz]{{Fidel Barrera-Cruz$^*$}}
\address[Barrera-Cruz]{Sunnyvale, CA}
\email{fidel.barrera@gmail.com}
  
 \author[PRAG]{{Thomas Prag}}
\address[Prag, Trotter]{School of Mathematics, Georgia Institute of Technology, Atlanta, Georgia 30332}
\email{\{tprag3,trotter\}@math.gatech.edu}
  
 \author[Smith]{{Heather C. Smith$^*$}}
\address[Smith]{Department of Mathematics and Computer Science, Davidson College, Davidson, NC 28035}
\email{hcsmith@davidson.edu} 

\author[TAYLOR]{Libby Taylor$^*$}
\address[Taylor]{Department of Mathematics\\
Stanford University\\
Stanford, CA 94305}
\email{libbytaylor@stanford.edu}

\author[TROTTER]{William T. Trotter}
  
\thanks{$^*$Much of the research was completed while affiliated with the Georgia Instutite of Technology.}  
  
\date{June 19, 2019}

\subjclass[2010]{06A07, 05C35, 05D10}

\keywords{dimension, Boolean dimension, local dimension, 
tree-width, path-width, Ramsey theory}

\begin{abstract}
  The original notion of dimension for posets is due to Dushnik and
  Miller and has been studied extensively in the literature.  Quite
  recently, there has been considerable interest in two variations of
  dimension known as \textit{Boolean dimension} and \textit{local
    dimension}.  For a poset $P$, the Boolean dimension of $P$ and the
  local dimension of $P$ are both bounded from above by the dimension
  of $P$ and can be considerably less.  Our primary goal will be to
  study analogies and contrasts among these three parameters.  As one
  example, it is known that the dimension of a poset is bounded as a function of its
  height and the tree-width of its cover graph. The Boolean dimension
  of a poset is bounded in terms of the tree-width of its cover graph,
  independent of its height. We show that the local dimension of a
  poset cannot be bounded in terms of the tree-width of its cover
  graph, independent of height. We also prove that the local dimension
  of a poset is bounded in terms of the path-width of its cover
  graph. In several of our results, Ramsey theoretic methods will be
  applied.
\end{abstract}

\maketitle

\section{Introduction}

We investigate combinatorial problems for finite posets. As has become
standard in the literature, we use the terms \textit{elements} and
\textit{points} interchangeably in referring to the members of the
ground set of a poset.  We write $x\parallel y$ in $P$ when $x$ and
$y$ are incomparable in a poset $P$, and we let $\Inc(P)$ denote the
set of all ordered pairs $(x,y)$ with $x\parallel y$ in $P$.  As a
binary relation, $\Inc(P)$ is symmetric.  Recall that a non-empty
family $\cgR$ of linear extensions of $P$ is called a
\textit{realizer} of $P$ when $x<y$ in $P$ if and only if $x<y$ in $L$
for each $L\in\cgR$.  Clearly, a non-empty family $\cgR$ of linear
extensions of $P$ is a realizer of $P$ if and only if for each
$(x,y)\in \Inc(P)$, there is some $L\in\cgR$ for which $x>y$ in $L$.
The \textit{dimension} of a poset $P$, denoted $\dim(P)$, as defined
by Dushnik and Miller in their seminal paper~\cite{bib:DusMil}, is the
least positive integer $d$ for which $P$ has a realizer $\cgR$ with
$|\cgR|=d$.

For an integer $n\ge2$, the \textit{standard example} $S_n$ is the\
height~$2$ poset with minimal elements $A=\{a_1,a_2,\dots,a_n\}$ and
maximal elements $B=\{b_1,b_2,\dots,b_n\}$. Furthermore, $a_i<b_j$ in
$S_n$ if and only if $i\neq j$. As noted in~\cite{bib:DusMil},
$\dim(S_n)=n$, for all $n\ge2$.  Also, dimension is clearly a
monotonic parameter, i.e., if $Q$ is a subposet of $P$, then
$\dim(Q)\le\dim(P)$.  Accordingly, a poset which contains a large
standard example as a subposet has large dimension.  On the other
hand, there are posets which do not contain the standard example $S_2$
as a subposet and nevertheless have large dimension.  This observation
is the poset analogue to the fact that there are triangle-free graphs
which have large chromatic number.

Quite recently, researchers have been investigating combinatorial
problems for two variations of the Dushnik-Miller concept for
dimension, known as \textit{Boolean dimension} and \textit{local
  dimension}.  The concept of Boolean dimension was introduced by
Gambosi, Ne\v{s}et\v{r}il and Talamo in a 1987 conference
paper~\cite{bib:GaNeTa-1}, with the full version~\cite{bib:GaNeTa-2}
appearing in journal form in 1990.  However, we use here the
definition of Boolean dimension which appears in a 1989 paper by
Ne\v{s}et\v{r}il and Pudl\'{a}k~\cite{bib:NesPud}.  This paper was
first presented in conference form in 1987.  Later, we will comment on
the distinction between the two definitions.
 
On the other hand, the quite new notion of local dimension is due to
Torsten Ueckerdt~\cite{bib:Ueck} who shared his ideas with
participants of the workshop on \textit{Order and Geometry} held in
Gu{\l}towy, Poland, September 14--17, 2016. Ueckerdt's new concept
resonated with researchers at the workshop, and it served to kindle
renewed interest in Boolean dimension as well.

Here is the definition for Boolean dimension.  For a positive integer
$d$, let $\bfII^d$ denote the set of all $0$--$1$ strings of
length~$d$. Such strings are also called \textit{bit strings}.  Let
$P$ be a poset and let $\cgB=\{L_1,L_2,\dots,L_d\}$ be a non-empty
family of linear orders on the ground set of $P$ (these linear orders
need not be linear extensions of $P$).  Also, let
$\tau:\bfII^{d}\rightarrow \{0,1\}$ be a function.  For each pair
$(x,y)$ of distinct elements of $P$, we form a bit string
$q(x,y,\cgB)$ of length~$d$ which has value $1$ in coordinate $i$ if
and only if $x<y$ in $L_i$.  The pair $(\cgB,\tau)$ is a
\textit{Boolean realizer}\footnote{In~\cite{bib:GaNeTa-2}, a pair
  $(\cgB,\tau)$ with $\cgB=\{L_1,L_2,\dots,L_d\}$, is considered a
  Boolean realizer only when there is some $i$ for which $L_i$ is a
  linear extension of $P$ and $\tau(x,y)=1$ implies $x<y$ in $L_i$.
  We prefer to drop both these restrictions, as is done
  in~\cite{bib:NesPud}.}  when for each pair $x,y$ of distinct
elements of $P$, $x<y$ in $P$ if and only if $\tau(q(x,y,\cgB))=1$.
The \textit{Boolean dimension of $P$}, denoted $\bdim(P)$, is the
least positive integer $d$ for which $P$ has a Boolean realizer
$(\cgB,\tau)$ with $|\cgB|=d$.  Clearly, $\bdim(P)\le\dim(P)$, since
if $\cgR=\{L_1,L_2,\dots, L_d\}$ is a realizer of $P$, we simply take
$\tau$ as the function which maps $(1,1,\dots,1)$ to $1$ while all
other bit strings of length~$d$ are mapped to~$0$.

Trivially, $\bdim(P)=1$ if and only if $P$ is either a chain or an
antichain\footnote{In~\cite{bib:FeMeMi}, Felsner, M\'{e}sz\'{a}ros and
  Micek consider pairs $x,y$ of not necessarily distinct elements of
  $P$ so a query $q(x,y,\cgB)$ has coordinate~$i$ set to~$1$ if and
  only if $x\le y$ in $L_i$.  With this restriction, the function
  $\tau$ is constrained to send the constant string $(1,1,\dots,1)$
  to~$1$, so that a non-trivial antichain has Boolean dimension~$2$.
  For all other posets, their definition and ours give exactly the
  same value for Boolean dimension.}.  Also, $\bdim(Q)\le \bdim(P)$
when $Q$ is a subposet of $P$. Clearly, $\bdim(P)=\bdim(P^*)$ where $P^*$
denotes the dual of $P$.  It is an easy exercise to show that if
$\bdim(P)=2$, then $\dim(P)=2$.  In~\cite{bib:GaNeTa-2}, Gambosi,
Ne\v{s}et\v{r}il and Talamo show that $\dim(P)=3$ if and only if
$\bdim(P)=3$.  However, their proof uses a more restrictive definition
of Boolean dimension.  In~\cite{bib:TroWal}, Trotter and Walczak
simplify the proof given in~\cite{bib:GaNeTa-2} and show that it
actually works for the more general notion of Boolean dimension we are
studying in this paper.  It is an easy exercise to show that all
standard examples have Boolean dimension at most~$4$.  In fact,
$\bdim(S_n)=n$ when $2\le n\le 4$ and $\bdim(S_n)=4$ when $n\ge4$.

Here is the definition for local dimension.  Let $P$ be a poset.  A
\textit{partial linear extension}, abbreviated $\ple$, of $P$ is a
linear extension of a subposet of $P$.  Whenever $\cgL$ is a family of
$\ple$'s of $P$ and $u\in P$, we set
$\mu(u,\cgL)=|\{L\in\cgL:u\in L\}|$.  In turn, we set
$\mu(\cgL)=\max\{\mu(u,\cgL):u\in P\}$.  A non-empty family $\cgL$ of
$\ple$'s of a poset $P$ is called a \textit{local realizer} of $P$ if
the following two conditions are satisfied:
\begin{enumerate}
\item If $x\le y$ in $P$, there is some $L\in\cgL$ for which $x\le y$
  in $L$;

\item if $(x,y)\in \Inc(P)$, there is some $L\in\cgL$ for which $x>y$
  in $L$.
\end{enumerate}
The \textit{local dimension of $P$}, denoted $\ldim(P)$, is defined as
\[\ldim(P)=\min\{\mu(\cgL):\cgL\text{ is a local realizer of }P\}.\]
Clearly, $\ldim(P)\le\dim(P)$ for all posets $P$, since any realizer
is also a local realizer. Also, $\ldim(P)=1$ if and only if $P$ is a
chain; $\ldim(Q)\le\ldim(P)$ if $Q$ is a subposet of $P$; and if $P^*$
is the dual of $P$, then $\ldim(P^*)=\ldim(P)$.  It is an easy
exercise to show that if $\ldim(P)=2$, then $\dim(P)=2$.  In
presenting his concept to conference participants,
Ueckerdt~\cite{bib:Ueck} noted that the local dimension of a standard
example is at most~$3$.  In fact, $\ldim(S_n)=n$ when $2\le n\le 3$
and $\ldim(S_n)=3$ when $n\ge3$.

In this paper, we give analogies and contrasts between
(Dushnik-Miller) dimension, Boolean dimension and local dimension.
Although our results touch on several other topics, we consider the
connections with structural graph theory, given in
Section~\ref{sec:sgt}, our main theorems.  A number of open problems
remain, and we give a summary listing in the closing section.

Our arguments will use the following notational conventions:
\begin{enumerate}
\item If $n$ is a positive integer, then we use the now standard
  notation $[n]$ to represent $\{1,2,\dots,n\}$.
\item Let $\cgL=\{L_1,L_2,\dots,L_t\}$ be a family of $\ple$'s of a
  poset $P$. If $x\in P$, and $\mu(x,\cgL)=m$, then there are integers
  $j_1<j_2<\dots<j_m$ so that $x$ is in $L_{j_\alpha}$ for each
  $\alpha\in[m]$. In this case, we will say that \textit{occurrence
    $\alpha$ of $x$ is in $L_{j_\alpha}$}.
\item We will make use of the general form of Ramsey's theorem: 
  For every triple $(k,h,r)$ of positive integers with $h\ge k$, there
  is a least positive integer $\Ram(k,h;r)$ so that if
  $n\ge \Ram(k,h;r)$ and $\varphi$ is any coloring of the $k$-element
  subsets of $[n]$ using $r$ colors, then there is an $h$-element
  subset $H$ of $[n]$ so that $\varphi$ maps all $k$-element subsets of
  $H$ to the same color.
\end{enumerate}

\section{Forcing Large Boolean Dimension and Large Local Dimension}

Since standard examples have small Boolean dimension and small local
dimension, it is of interest to explore what can cause these two
parameters to be large.  We start with an example of a well known
family of posets where dimension, local dimension and Boolean
dimension all grow together.

When $n\ge2$, we let $I_n$ denote the \textit{canonical interval
  order} whose elements are the closed intervals of the form $[i,j]$
where $i$ and $j$ are integers with $1\le i<j\le n$. The partial order
on $I_n$ is defined by setting $[i,j]<[k,l]$ in $I_n$ when $j<k$. As
is well known, the poset $I_n$ does not contain the standard example
$S_2$, but the dimension of $I_n$ goes to infinity with $n$. In fact,
the value of $\dim(I_n)$ is now known to within an additive constant
(see the remarks in~\cite{bib:BHPT}).  We now explain why both
$\ldim(I_n)$ and $\bdim(I_n)$ tend to infinity. We start with the
result for local dimension.

\begin{theorem}\label{thm:ldim-grow}
  For each $s\ge1$, if $n\ge \Ram(4,7;s^2)$, then $\ldim(I_n)>s$.
\end{theorem}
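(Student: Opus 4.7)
The plan is to argue by contradiction. Suppose $\ldim(I_n) \le s$ and $n \ge \Ram(4,7;s^2)$, and fix a local realizer $\cgL$ of $I_n$ with $\mu(\cgL) \le s$. For each $4$-element subset $F = \{a,b,c,d\} \subseteq [n]$ with $a<b<c<d$, the intervals $[a,c]$ and $[b,d]$ are incomparable in $I_n$, so I can choose some $L_F \in \cgL$ with $[a,c] > [b,d]$ in $L_F$. Let $p_F \in [s]$ and $q_F \in [s]$ be the occurrences of $[a,c]$ and $[b,d]$ in $L_F$, respectively. This defines a coloring of the $4$-subsets of $[n]$ by $[s]\times[s]$, and Ramsey's theorem yields a monochromatic $7$-element subset $H = \{h_1 < h_2 < \dots < h_7\}$ of common color $(p,q)$.

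The key observation is a sharing property: if two $4$-subsets $F, F'$ of $H$ satisfy $[a_F, c_F] = [a_{F'}, c_{F'}]$, then $L_F = L_{F'}$, because both must contain the unique occurrence $p$ of this common interval; symmetrically, $[b_F, d_F] = [b_{F'}, d_{F'}]$ forces $L_F = L_{F'}$ via the unique occurrence $q$. Iterated applications will pin many $4$-subsets of $H$ to a single $\ple$ $L^*$, and the contradiction will emerge inside $L^*$. The argument splits on whether $p = q$.

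\emph{Case $p = q$.} Take $F_i = \{h_i, h_{i+1}, h_{i+2}, h_{i+3}\}$ for $i=1,2,3$. The right interval $[h_{i+1}, h_{i+3}]$ of $F_i$ equals the left interval of $F_{i+1}$, and with $p = q$ the sharing property gives $L_{F_1} = L_{F_2} = L_{F_3} =: L^*$. Chaining the three reversed pairs yields $[h_1, h_3] > [h_2, h_4] > [h_3, h_5] > [h_4, h_6]$ in $L^*$, so $[h_1, h_3] > [h_4, h_6]$ in $L^*$ by transitivity. But $[h_1, h_3] < [h_4, h_6]$ in $I_n$ (since $h_3 < h_4$), and $L^*$ as a $\ple$ must respect this comparability. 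Contradiction.

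\emph{Case $p \ne q$.} I will force the element $[h_2, h_6]$ to lie in a single $\ple$ $L^*$ at both occurrence $p$ and occurrence $q$, which is impossible. The four $4$-subsets $\{h_1, h_3, h_5, h_6\}, \{h_1, h_3, h_5, h_7\}, \{h_2, h_3, h_5, h_6\}, \{h_2, h_3, h_5, h_7\}$ are pairwise linked by shared intervals $[h_1, h_5], [h_2, h_5]$ on the left (at occurrence $p$) and $[h_3, h_6], [h_3, h_7]$ on the right (at occurrence $q$), so all four reversing $\ple$s coincide with a common $L^*$. Then $\{h_1, h_2, h_5, h_6\}$ shares its left interval $[h_1, h_5]$ with $L^*$, so its reversing $\ple$ is $L^*$, placing $[h_2, h_6]$ into $L^*$ at occurrence $q$; while $\{h_2, h_3, h_6, h_7\}$ shares its right interval $[h_3, h_7]$ with $L^*$, so its reversing $\ple$ is $L^*$ as well, placing $[h_2, h_6]$ into $L^*$ at occurrence $p$. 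Since $p \ne q$, this delivers the contradiction. The main obstacle is orchestrating the six $4$-subsets so that the sharing property propagates on both the ``left'' and ``right'' sides and closes on the single element $[h_2, h_6]$.
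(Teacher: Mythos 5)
Your proof is correct and follows essentially the same strategy as the paper: color the $4$-subsets by the occurrence pair of a reversing $\ple$, extract a monochromatic $7$-set via Ramsey, and exploit the fact that a fixed occurrence of a fixed element pins down a unique $\ple$ to force several $4$-subsets into one linear order. The only difference is organizational — the paper runs a single chain of four subsets that derives $\alpha=\beta$ en route to a transitivity contradiction, whereas you split into the cases $p=q$ (transitivity contradiction) and $p\neq q$ (two distinct occurrences of $[h_2,h_6]$ in one $\ple$) — and both versions check out.
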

\begin{proof} 
  Suppose to the contrary that for some $s\ge1$, and
  $n\ge \Ram(4,7;s^2)$ we have $\ldim(I_n)\le s$. Let
  $\cgL=\{L_i:1\le i\le t\}$ be a local realizer for $I_n$ with
  $\mu(I_n,\cgL)\le s$. Consider a $4$-element subset $\{a,b,c,d\}$ of
  $[n]$ with $a<b<c<d$. Then there is some least positive integer
  $m\in[t]$ so that $[a,c]>[b,d]$ in $L_m$. We then set
  $\varphi(\{a,b,c,d\})=(\alpha,\beta)$ where occurrence $\alpha$ of
  $[a,c]$ is in $L_m$ and occurrence $\beta$ of $[b,d]$ is in
  $L_m$. Now, we have a coloring of the $4$-element subsets of $[n]$
  using $s^2$ colors.

  In view of our choice for the size of $n$, we know there is some
  $7$-element subset $H=\{a,b,c,d,e,f,g\}$ of $[n]$ and a color
  $(\alpha,\beta)$ so that all $4$-element subsets of $H$ are mapped
  to $(\alpha,\beta)$. We may assume, without loss of generality, that
  $a<b<c<d<e<f<g$. Now consider the subset $\{a,c,d,g\}$. Then let $m$
  be the least positive integer so that $[a,d]>[c,g]$ in $L_m$. Then
  occurrence $\alpha$ of $[a,d]$ is in $L_m$ as is occurrence $\beta$
  of $[c,g]$.

  Now consider the set $\{b,c,f,g\}$. Since occurrence $\beta$ of
  $[c,g]$ is in $L_m$, then the least $m'$ such that $[b,f] > [c, g]$ in $L_{m'}$ is $m'=m$ and occurrence $\alpha$ of $[b,f]$ is
  also in $L_m$.

  Now consider the set $\{b,e,f,g\}$. Since occurrence $\alpha$ of
  $[b,f]$ is in $L_m$, we know that occurrence $\beta$ of $[e,g]$ is
  also in $L_m$. Furthermore, we know that $[b,f]>[e,g]$ in $L_m$.
  Finally, we consider the set $\{a,b,d,f\}$ and conclude that
  $[a,d]>[b,f]$ in $L_m$. In particular $\alpha=\beta$. However, we
  have now shown that $[a,d]>[b,f]>[e,g]$ in $L_m$. This is a
  contradiction since $[a,d]<[e,g]$ in $I_n$.
\end{proof}

Here is the analogous result for Boolean dimension.

\begin{theorem}\label{thm:bdim-grow}
  For each $d\ge 1$, if $n\ge\Ram(4,6;2^d)$, then $\bdim(I_n)>d$.
\end{theorem}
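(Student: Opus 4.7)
The plan is to mirror the Ramsey argument of Theorem~\ref{thm:ldim-grow} in the Boolean setting. The key observation to exploit is that a single coloring will force all ``crossing'' incomparable pairs of intervals on a Ramsey block to share one bit string $\sigma\in\bfII^d$, and by chaining three such crossings inside each linear order $L_m$ we should be able to force a \emph{comparable} pair of intervals to have the same bit string $\sigma$, contradicting the function $\tau$. Specifically, I would assume for contradiction that $\bdim(I_n)\le d$ for some $n\ge\Ram(4,6;2^d)$, fix a Boolean realizer $(\cgB,\tau)$ with $\cgB=\{L_1,\ldots,L_d\}$, and color each $4$-subset of $[n]$ by
\[
\varphi(\{x_1,x_2,x_3,x_4\})=q\bigl([x_1,x_3],[x_2,x_4],\cgB\bigr)\in\bfII^d\qquad(x_1<x_2<x_3<x_4).
\]
This uses $2^d$ colors, so Ramsey produces a monochromatic $6$-subset $H=\{x_1<x_2<x_3<x_4<x_5<x_6\}$ of common color $\sigma$.

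The heart of the argument, and what I expect to be the main thing to get right, is the claim that $q([x_1,x_3],[x_4,x_6],\cgB)=\sigma$. I would verify this coordinate by coordinate. Fixing $m\in[d]$ and assuming $\sigma_m=1$, monochromaticity applied to $\{x_1,x_2,x_3,x_4\}$, $\{x_2,x_3,x_4,x_5\}$, and $\{x_3,x_4,x_5,x_6\}$ should give $[x_1,x_3]<[x_2,x_4]<[x_3,x_5]<[x_4,x_6]$ in $L_m$, so that transitivity of $L_m$ yields $[x_1,x_3]<[x_4,x_6]$ in $L_m$ and hence coordinate $m$ of $q([x_1,x_3],[x_4,x_6],\cgB)$ equals $1=\sigma_m$; the case $\sigma_m=0$ is symmetric.

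To finish, since $x_3<x_4$ gives $[x_1,x_3]<[x_4,x_6]$ in $I_n$, we would get $\tau(\sigma)=1$; on the other hand $\sigma=q([x_1,x_3],[x_2,x_4],\cgB)$ and $[x_1,x_3]\parallel[x_2,x_4]$ in $I_n$, forcing $\tau(\sigma)=0$. The main design choice is the length of the chain: it has to be long enough that its endpoints are comparable in $I_n$, yet short enough to fit inside a monochromatic set of the promised size. The three-crossing chain $[x_1,x_3]$--$[x_2,x_4]$--$[x_3,x_5]$--$[x_4,x_6]$ meets both constraints using exactly $6$ elements, which explains the Ramsey parameter $6$ in the statement.
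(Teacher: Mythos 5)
Your proposal is correct and follows essentially the same route as the paper: color $4$-subsets by the query string of the crossing pair, extract a monochromatic $6$-set, chain crossings within each $L_m$ to force a comparable pair to receive the same string $\sigma$, and derive a contradiction via $\tau$. The only (immaterial) difference is that the paper chains just two crossings, $[a,c]<[b,e]<[d,f]$, using the subsets $\{a,b,c,e\}$ and $\{b,d,e,f\}$, whereas you use three consecutive windows; both fit inside the $6$-element monochromatic set.
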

\begin{proof}
  Suppose to the contrary that for some $d\ge1$, and
  $n\ge \Ram(4,6;2^d)$ we have $\bdim(I_n)\le d$.  Let $(\cgB,\tau)$
  be a Boolean realizer for $I_{n}$ with $\cgB=\{L_1,L_2,\dots,L_d\}$.
  Then for each $4$-element subset $\{a,b,c,d\}$ of $[n]$ with
  $a<b<c<d$, we define the coloring $\varphi$ by setting
  $\varphi(\{a,b,c,d\})= q([a,c],[b,d],\cgB)$.

  In view of our choice for $n$, we may assume that there is some
  binary string $\sigma$ of length $d$ and a $6$-element subset
  $H=\{a,b,c,d,e,f\}$ of $[n]$, such that $a<b<c<d<e<f$, so that
  $\varphi$ maps all $4$-element subsets of $H$ to $\sigma$.  In
  particular, $\varphi$ assigns the color $\sigma$ to the $4$-element
  subsets $\{a,b,c,e\}$ and $\{b,d,e,f\}$, that is,
  $q([a,c],[b,e],\cgB)=\sigma=q([b,e],[d,f],\cgB)$.

  Now let $i\in[d]$.  If $\sigma(i)=1$, then $[a,c]<[b,e]<[d,f]$ in
  $L_i$.  If $\sigma(i)=0$, then $[d,f]<[b,e]<[a,c]$ in $L_i$.
  However, this shows that
  $\sigma=q([a,c],[b,e],\cgB)=q([a,c],[d,f],\cgB)$. This is a
  contradiction since $[a,c]<[d,f]$ in $I_n$, so
  $\tau(q([a,c],[d,f],\cgB))=1$, but $[a,c]$ and $[b,e]$ are
  incomparable, so $\tau(q([a,c],[b,e],\cgB))=0$.
\end{proof}

Next, we present a family for which dimension and local dimension are
unbounded but Boolean dimension is bounded.  For a pair $(d,n)$ of
integers with $2\le d < n$, let $P(1,d;n)$ denote the poset consisting
of all $1$-element and $d$-element subsets of $[n]$ partially ordered
by inclusion.  We abbreviate the dimension, Boolean dimension and
local dimension of $P(1,d;n)$ as $\dim(1,d;n)$, $\bdim(1,d;n)$ and
$\ldim(1,d;n)$, respectively.  Dushnik~\cite{bib:Dush} calculated
$\dim(1,d;n)$ exactly when $d\ge 2\sqrt{n}$, and
Spencer~\cite{bib:Spen} showed that for fixed $d$,
$\dim(1,d;n)=\Theta(\log\log n)$.  Historically, there has been
considerable interest in the case where $d=2$.  Combining results of
Ho\c{s}ten and Morris~\cite{bib:HosMor} with estimates of Kleitman and
Markovsky~\cite{bib:KleMar}, the following theorem follows easily (see
the comments in~\cite{bib:BHPT}).

\begin{theorem}\label{thm:12n}
  For every $\epsilon>0$, there is an integer $n_0$ so that if $n>n_0$
  and
  \[
    s= \lg\lg n + 1/2 \lg\lg\lg n +1/2\lg{\pi} + 1/2,
  \]
  then $s-\epsilon < \dim(1,2;n)< s +1 +\epsilon$.
\end{theorem}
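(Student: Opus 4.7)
The theorem is not proved from scratch; as the paper indicates, it is an elementary consequence of combining two earlier results, with the derivation alluded to in the remarks of~\cite{bib:BHPT}. My plan is to use the Hoşten--Morris characterization~\cite{bib:HosMor} in tandem with the Kleitman--Markovsky counting estimate~\cite{bib:KleMar}, and then to invert the resulting doubly-exponential asymptotic relation to solve for $\dim(1,2;n)$ in terms of $n$.

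First, I would invoke the Hoşten--Morris theorem, which gives an exact combinatorial identity: there is a function $g\colon\NN\to\NN$, counting a specific family of set systems on $[d]$ (built from an antichain in the Boolean lattice $2^{[d]}$), such that $\dim(1,2;n)\le d$ if and only if $n\le g(d)$. Equivalently, $\dim(1,2;n)$ is the smallest $d$ with $g(d)\ge n$. Second, I would invoke the Kleitman--Markovsky asymptotic estimate, which shows that $g(d)$ is doubly exponential in $d$; after applying Stirling's formula to the central binomial coefficient appearing in the bound, one obtains an expansion of the shape $\lg\lg g(d) = d - \tfrac{1}{2}\lg d - c + o(1)$ for an explicit constant $c$.

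Third, to extract the claimed bounds I would invert. Setting $\dim(1,2;n)=d$ forces $g(d-1)<n\le g(d)$, hence $\lg\lg n = d - \tfrac{1}{2}\lg d - c + o(1)$. Substituting $\lg d = \lg\lg\lg n + o(1)$ (which is legitimate because $d = \lg\lg n + o(\lg\lg n)$) gives $d = \lg\lg n + \tfrac{1}{2}\lg\lg\lg n + c + o(1)$, and matching constants through the Stirling expansion produces $c = \tfrac{1}{2}\lg\pi+\tfrac{1}{2}$, which is exactly the value $s$ appearing in the statement. The additive gap of $1$ between the upper bound $s+1+\epsilon$ and the lower bound $s-\epsilon$ reflects the length of the interval $(g(d-1),g(d)]$ on which the integer $\dim(1,2;n)$ remains constant while the continuous parameter $s$ drifts; the $\epsilon$ slack absorbs the $o(1)$ error terms provided $n>n_0$ for $n_0$ sufficiently large.

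The main obstacle is purely technical: carefully tracking the precise constants from the Kleitman--Markovsky asymptotic through Stirling's formula and the double-logarithm inversion so as to land exactly on the coefficient $\tfrac{1}{2}\lg\pi+\tfrac{1}{2}$. There is no new combinatorial idea to discover; once the two cited results are in hand, the derivation reduces to a careful asymptotic calculation of the type alluded to in~\cite{bib:BHPT}, and the remainder of the argument is automatic.
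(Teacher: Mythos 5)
Your proposal follows exactly the route the paper itself indicates: the paper gives no proof of Theorem~\ref{thm:12n} beyond the remark that it "follows easily" by combining the Ho\c{s}ten--Morris characterization of $\dim(1,2;n)$ with the Kleitman--Markovsky (Dedekind-number) asymptotics, as detailed in~\cite{bib:BHPT}, and your outline---exact threshold function $g(d)$, doubly exponential growth via the central binomial coefficient, Stirling, and inversion of the double logarithm with the additive gap of $1$ coming from the plateau $(g(d-1),g(d)]$---is precisely that derivation. The only caveat is the one you already flag: the constant $\tfrac12\lg\pi+\tfrac12$ depends on the exact ground set in the Ho\c{s}ten--Morris count (it works out with $[d-1]$ rather than $[d]$), so the bookkeeping there must be done carefully.
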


As a consequence, for almost all large values of $n$, we can compute
the value of $\dim(1,2;n)$ \textit{exactly}; for the remaining small
fraction of values, we are able to compute two consecutive integers
and say that $\dim(1,2;n)$ is one of the two.

We are not able to compute the value of $\ldim(1,2;n)$ as accurately,
but at least we can show that $\ldim(1,2;n)$ goes to infinity with
$n$.

\begin{theorem}\label{thm:ldim-grow-1}
  For each $s\ge1$, if $n\ge \Ram(3,4;s^2)$, then $\ldim(1,2;n)>s$.
\end{theorem}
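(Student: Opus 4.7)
The plan is to mirror the argument used for Theorem~\ref{thm:ldim-grow}: assume for contradiction that $\ldim(1,2;n)\le s$, fix a local realizer $\cgL=\{L_1,\ldots,L_t\}$ of $P(1,2;n)$ with $\mu(\cgL)\le s$, color the $3$-element subsets of $[n]$ with $s^2$ colors, and force a monochromatic $4$-element subset to contradict the definition of a local realizer. The key design choice is \emph{which} incomparable pair to assign to each triple; the one I would use is the ``middle-versus-extremes'' pair $\{b\}\parallel\{a,c\}$ for a triple $\{a,b,c\}$ with $a<b<c$.

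Explicitly, for $\{a,b,c\}\subseteq[n]$ with $a<b<c$, let $m=m(\{a,b,c\})$ be the least index with $\{b\}>\{a,c\}$ in $L_m$, and color the triple by $(\alpha,\beta)\in[s]^2$, where occurrence $\alpha$ of $\{b\}$ and occurrence $\beta$ of $\{a,c\}$ both lie in $L_m$. Since $n\ge\Ram(3,4;s^2)$, Ramsey's theorem produces a $4$-subset $H=\{a,b,c,d\}$ with $a<b<c<d$ on which all four $3$-subsets receive the same color $(\alpha,\beta)$.

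Next I would chase indices across the four triples of $H$. The triples $\{a,b,c\}$ and $\{a,b,d\}$ share the middle singleton $\{b\}$, so occurrence $\alpha$ of $\{b\}$ forces a common witness $L_m$; the triples $\{a,b,d\}$ and $\{a,c,d\}$ share the outer pair $\{a,d\}$, so occurrence $\beta$ of $\{a,d\}$ matches them; finally $\{a,c,d\}$ and $\{b,c,d\}$ share the middle $\{c\}$, matched through occurrence $\alpha$ of $\{c\}$. Therefore a single $L_m\in\cgL$ contains $\{a\},\{b\},\{c\},\{a,c\},\{a,d\},\{b,d\}$ and realizes
\[
\{b\}>\{a,c\},\quad \{b\}>\{a,d\},\quad \{c\}>\{a,d\},\quad \{c\}>\{b,d\}.
\]

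To close the loop I would invoke the poset cover relations $\{c\}<\{a,c\}$ and $\{b\}<\{b,d\}$, both of which must hold in $L_m$. Chaining them against the inequalities above yields
\[
\{b\}>\{a,c\}>\{c\}>\{b,d\}>\{b\}\quad\text{in }L_m,
\]
an impossible cycle. The one substantive obstacle is selecting this particular incomparable pair: more obvious choices such as $\{a\}\parallel\{b,c\}$ produce aligned inequalities in $L_m$ that remain mutually consistent, whereas the middle-versus-extremes choice lines up exactly the two relevant cover relations in $P(1,2;n)$ so that the cycle closes.
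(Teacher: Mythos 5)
Your proposal is correct and takes essentially the same route as the paper's proof: the same coloring of a triple $\{a,b,c\}$ by the occurrence indices witnessing the reversal $\{b\}>\{a,c\}$, the same occurrence-chasing across the four triples of the monochromatic $4$-set to land all four reversals in a single $L_m$, and the same closing cycle obtained from the cover relations $\{c\}<\{a,c\}$ and $\{b\}<\{b,d\}$. The only (immaterial) slip is the claim that $\{a\}$ lies in $L_m$, which is neither forced by the argument nor used in the contradiction.
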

\begin{proof}
  Fix $s\ge1$ and let $n\ge\Ram(3,4;s^2)$. We assume that 
  $\cgL=\{L_1,L_2,\dots,L_t\}$ is a local realizer for $P=P(1,2;n)$
  with $\mu(P,\cgL)\le s$ and argue to a contradiction. In the
  argument, we abbreviate the singleton sets in $P(1,2;n)$ by omitting
  braces, i.e., the singleton set $\{a\}$ will just be written as $a$.
  Now the partial order is that an integer $a\in[n]$ is less than a
  $2$-element set $S$ in $P(1,2;n)$ when $a\in S$.

  Now let $T=\{a,b,c\}$ be a $3$-element subset of $[n]$. We may
  assume without loss of generality that $a<b<c$. Since
  $b\not\in\{a,c\}$, there is some least integer $m\in[t]$ with
  $b>\{a,c\}$ in $L_m$. Then there is an ordered pair
  $(\alpha,\beta)\in[s]\times[s]$ of (not necessarily distinct)
  integers so that occurrence $\alpha$ of $b$ is in $L_m$ and
  occurrence $\beta$ of $\{a,c\}$ is in $L_m$. We then have a coloring
  $\varphi$ of the $3$-element subsets of $[n]$ using $s^2$
  colors. Since $n\ge\Ram(3,4;s^2)$, there is some color
  $(\alpha,\beta)$ and a $4$-element subset $H=\{a,b,c,d\}$ so that
  all $3$-element subsets of $H$ are assigned color
  $(\alpha,\beta)$. Again, we may assume without loss of generality
  that $a<b<c<d$.

  We consider first the $3$-element subset $\{a,b,d\}$ and note that
  there is some $m\in[t]$ for which $b>\{a,d\}$ in $L_m$.
  Furthermore, occurrence $\alpha$ of $b$ is in $L_m$ while occurrence
  $\beta$ of $\{a,d\}$ is in $L_m$. Now consider the subset
  $\{a,c,d\}$. Since occurrence $\beta$ of $\{a,d\}$ is in $L_m$, we
  must have occurrence $\alpha$ of $c$ in $L_m$ with $c>\{a,d\}$ in
  $L_m$.

  Now consider the subset $\{a,b,c\}$. Since occurrence $\alpha$ of
  $b$ is in $L_m$, we must then have $b>\{a,c\}$ in $L_m$. On the
  other hand, if we consider the subset $\{b,c,d\}$, since occurrence
  $\alpha$ of $c$ is in $L_m$, we must have $c>\{b,d\}$ in $L_m$. We
  then have $\{b,d\}<c<\{a,c\}<b$ in $L_m$, which is a contradiction
  to the fact that $b<\{b,d\}$ in every ple of $P(1,2;n)$ where $b$
  and $\{b,d\}$ appear.
\end{proof}

Since $P(1,2;n)$ is a subposet of $P(1,d;n+d-2)$, it follows that for
fixed $d\ge2$, $\ldim(1,d;n)$ tends to infinity with $n$.  However, as
we will soon see $\bdim(1,d;n)$ is bounded in terms of $d$.

For the family $P(1,d;n)$, every maximal element is comparable with
exactly $d$ elements.
A careful reading of the proof of Theorem 3.6 on page~259
in~\cite{bib:GaNeTa-2} shows that they have actually established the
following result.

\begin{theorem}\label{thm:bdim-Delta}
  Let $P$ be a poset of height~$2$.  If there is some positive integer
  $d$ so that each maximal element of $P$ is comparable with at most
  $d$ minimal elements, then $\bdim(P)\le 2d$.
\end{theorem}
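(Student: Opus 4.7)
The plan is to build an explicit Boolean realizer $(\cgB,\tau)$ of $P$ with $|\cgB|=2d$, where each pair $(L_{2j-1},L_{2j})$ is engineered to isolate the $j$-th down-neighbour of every maximal element. For each maximal $b$, fix an enumeration $D(b)=\{a_1^b,\dots,a_{k_b}^b\}$ of its down-neighbours (so $k_b\le d$), and for each $j\in[d]$ define a partial function $f_j$ on the maximal elements by $f_j(b)=a_j^b$ when $j\le k_b$ and leaving $f_j(b)$ undefined otherwise; also fix an auxiliary linear order $\pi$ on the minimal elements of $P$.

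For each $j\in[d]$, I build two linear orders on the ground set of $P$. In $L_{2j-1}$, list the minimal elements in the order $\pi$ and, for every maximal $b$ with $f_j(b)$ defined, insert $b$ immediately before $f_j(b)$; then place the remaining maximal elements (those with $f_j(b)$ undefined) as a block at the very top. Build $L_{2j}$ in mirror form using the reverse of $\pi$: insert each maximal $b$ with $f_j(b)$ defined immediately before $f_j(b)$ in this reversed order, and place the maximal elements with $f_j(b)$ undefined as a block at the very bottom. Whenever tiebreaking is required --- several maximal elements share the same $f_j$-value, or several lie together in the undefined block --- order them oppositely in $L_{2j-1}$ and $L_{2j}$. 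Finally, set $\tau(q)=1$ exactly when some coordinate pair $(q_{2j-1},q_{2j})$ equals $(0,0)$.

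The verification is a case analysis on an ordered pair $(x,y)$ of distinct elements. If $a$ is minimal, $b$ is maximal, and $a<b$ in $P$, then the unique $j$ with $f_j(b)=a$ places $b$ immediately before $a$ in both $L_{2j-1}$ and $L_{2j}$, giving $(q_{2j-1}(a,b,\cgB),q_{2j}(a,b,\cgB))=(0,0)$, so $\tau$ returns $1$. For every other pair type --- min--min, max--max, max--min, and min--max $(a,b)$ with $a\notin D(b)$ --- a direct check from the construction shows that no coordinate pair can equal $(0,0)$, so $\tau$ returns $0$. The step I expect to be most delicate is the max--max case, because two maximal elements can lie adjacent in each order when they share an $f_j$-value or both sit in the undefined block; here the opposite-tiebreak convention between $L_{2j-1}$ and $L_{2j}$ is precisely what prevents a spurious $(0,0)$ from appearing. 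Once this convention is in place, all remaining verifications reduce to reading off positions in the constructed orders.
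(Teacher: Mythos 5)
Your construction is correct. Note that the paper itself offers no proof of Theorem~\ref{thm:bdim-Delta}: it only remarks that the statement can be extracted from a careful reading of Theorem~3.6 of Gambosi, Ne\v{s}et\v{r}il and Talamo~\cite{bib:GaNeTa-2}. Your argument is therefore a self-contained substitute, and it is the natural one: each of the $d$ ``slots'' gets a pair of mutually mirrored orders, and a comparability $a<b$ is certified by the unique coordinate pair in which $b$ sits immediately below $a=f_j(b)$ in \emph{both} orders, while every other ordered pair of distinct elements splits as $(0,1)$ or $(1,0)$ in every coordinate pair because the minimal elements (and the tie-broken maximal elements, and the defined-versus-undefined blocks) are arranged oppositely in $L_{2j-1}$ and $L_{2j}$. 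The one clean lemma that makes all your case checks one-liners is worth isolating: if $f_j(b)=a'$, then for any minimal $a\ne a'$ the relative order of $b$ and $a$ in $L_{2j-1}$ is determined by whether $a<_\pi a'$, hence is reversed in $L_{2j}$; similarly for two maximal elements with distinct $f_j$-values. Two minor points to make explicit in a polished write-up: in a height-$2$ poset an isolated element is both minimal and maximal, so fix a partition of the ground set into the two roles before defining the orders (isolated points can be assigned arbitrarily); and observe that for the ordered pair $(b,a)$ with $a=f_j(b)$ the $j$-th coordinate pair is $(1,1)$, not $(0,0)$, so $\tau$ correctly returns $0$ in that direction.
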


The inequality in Theorem~\ref{thm:bdim-Delta} is obviously tight for
$d=1$.  We will now show that it is tight for $d\ge2$.  To accomplish,
we will show that $\bdim(1,d;n)=2d$, provided $n$ is sufficiently
large in terms of $d$.  The argument will make use of the following
elementary observation.  When $(\cgB,\tau)$ is a Boolean realizer of a
poset $P$, it is easy to see that a linear order $L_i$ in $\cgB$ can
be replaced\footnote{This statement does not apply for the definition
  of Boolean dimension used in~\cite{bib:FeMeMi}.}  with $L_i^*$, the
\textit{dual} of $L_{i}$, i.e., $x<y$ in $L_i$ if and only if $x>y$ in
$L_i^{*}$.  Of course, we must also make the obvious modification to
the map $\tau$.

\begin{theorem}\label{thm:bdim-delta-tight}
  For each $d\ge 2$, there is some positive integer $n_0$ so that if
  $n\ge n_0$, then $\bdim(1,d;n)=2d$.
\end{theorem}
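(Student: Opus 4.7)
The upper bound $\bdim(1,d;n)\le 2d$ is immediate from Theorem~\ref{thm:bdim-Delta}, since each $d$-subset of $[n]$ is comparable in $P(1,d;n)$ with exactly $d$ minimal elements. For the matching lower bound I assume for contradiction that $(\cgB,\tau)$ with $\cgB=\{L_1,\ldots,L_{2d-1}\}$ is a Boolean realizer of $P(1,d;n)$, and aim to derive a contradiction via Ramsey theory once $n$ is an iterated Ramsey number depending on $d$.

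The plan is to apply Ramsey's theorem twice in succession. First, color each $2$-subset $\{x,y\}\subseteq[n]$ by the $(2d-1)$-bit vector whose $i$-th entry is $1$ iff $x<_{L_i}y$. On a large Ramsey-monochromatic subset, each $L_i$ either agrees with or reverses the natural order on singletons; using the freedom to swap any $L_i$ for its dual (with $\tau$ updated, as noted just before the theorem statement), I arrange each $L_i$ to be natural on singletons. Second, color each $(d+1)$-subset $T=\{t_1<\cdots<t_{d+1}\}$ of the surviving set by the $(d+1)^{2}$-tuple of query bit strings $q(t_i,T\setminus\{t_j\},\cgB)$, and apply Ramsey again to extract a still-large monochromatic subset $H$ of size at least $2d+1$. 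Since any pair $(x,S)$ with $x\in H$ and $S\in\binom{H}{d}$ lies inside some $(d+1)$-subset of $H$, on $H$ the bit string $q(x,S,\cgB)$ depends only on the position of $x$ relative to the sorted elements of $S$.

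I then assign each such pair a \emph{type} $t\in\{0,1,\ldots,2d\}$: the comparable case $x=s_j$ is type $2j-1$, and the incomparable case $s_k<x<s_{k+1}$ (with $s_0=-\infty$, $s_{d+1}=+\infty$) is type $2k$, so that $q(x,S,\cgB)=\sigma_t$ for a well-defined $\sigma_t\in\{0,1\}^{2d-1}$. The key structural claim is that for each coordinate $i$ the set $P_i=\{t:(\sigma_t)_i=1\}$ is a prefix $\{0,1,\ldots,K_i-1\}$ of the type order, for some $K_i\in\{0,1,\ldots,2d+1\}$. Since $L_i$ is natural on singletons, for each $S\in\binom{H}{d}$ there is a threshold $t_{S,i}\in H\cup\{-\infty\}$ with $q(x,S,\cgB)_i=1$ iff $x\le t_{S,i}$. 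Choosing $S^\star\subseteq H$ to be spaced every other element of $H$ (possible since $|H|\ge 2d+1$) ensures every gap of $S^\star$ is non-empty, so every type is realized in $S^\star$; within $S^\star$ the bit-$1$ types must form a prefix because they are exactly those whose representatives lie at or below $t_{S^\star,i}$ in the natural order, and by Ramsey this prefix equals $P_i$.

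The contradiction is now a straightforward count. For each $t\in\{0,1,\ldots,2d-1\}$, types $t$ and $t+1$ have opposite parities, hence opposite $\tau$-values, so $\sigma_t\ne\sigma_{t+1}$; under the prefix property the only coordinate in which they can differ is an $i$ with $K_i=t+1$. Hence every value in $\{1,2,\ldots,2d\}$ must be realized as some $K_i$, demanding $2d$ distinct integers among the $2d-1$ values $K_1,\ldots,K_{2d-1}$, an impossibility. The main obstacle is the careful bookkeeping through the two nested Ramsey reductions so that $|H|\ge 2d+1$ survives and every type is realized in $H$, since only then does the prefix argument force the coordinate count that drives the contradiction; this dictates how $n_0$ grows in $d$, but only through iterated Ramsey numbers and no deeper estimate.
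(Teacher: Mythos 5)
Your proof is correct, and its skeleton matches the paper's: the upper bound via Theorem~\ref{thm:bdim-Delta}, then Erd\H{o}s--Szekeres/Ramsey on pairs plus the dualization remark to normalize all $2d-1$ linear orders to the natural order on a large subset, then a pigeonhole showing $2d-1$ orders cannot distinguish $2d$ adjacent ``positions'' of a singleton relative to a $d$-set. Where you diverge is the route to that pigeonhole. The paper needs no second Ramsey application: inside a normalized $(2d+1)$-element set $A=\{1,\dots,2d+1\}$ it fixes the single witness $S=\{2,4,\dots,2d\}$ and observes that each $L_j$ places $S$ into at most one of the $2d$ gaps $(i,i+1)$ of the chain $1<\dots<2d+1$; with only $2d-1$ orders some gap is unseparated, so $q(i,S,\cgB)=q(i+1,S,\cgB)$ even though exactly one of $i,i+1$ lies in $S$. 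Your thresholds $K_i$ are precisely ``which gap $S$ occupies in $L_i$,'' and your $2d$ adjacent type pairs are the paper's $2d$ gaps, so the final count is identical. What your extra Ramsey application on $(d+1)$-subsets buys is uniformity of $q(x,S,\cgB)$ over \emph{all} pairs in $H$, together with the prefix structure of the $\sigma_t$; this is more information than is needed, since a single well-chosen $S$ already forces the contradiction, and it costs you an extra layer of iterated Ramsey numbers in $n_0$. Both arguments are valid; the paper's is shorter and gives a (slightly) better bound on $n_0$, while yours makes explicit the structural fact that on a homogeneous set the query string depends only on the type of the pair, which is a reusable observation.
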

\begin{proof}
  We already know that $\bdim(1,d;n)\le 2d$ for all $d\ge2$.  We fix a
  value of $d\ge2$, suppose that $\bdim(1,d;n)< 2d$ for each $n>d$
  and argue to a contradiction.

  Let $(\cgB,\tau)$ be a Boolean realizer for $P(1,d;n)$ with
  $\cgB=\{L_1,L_2,\dots,L_s\}$ such that $s<2d$.  As before, we
  take $\Min(P)=[n]$ with $\Max(P)$ the family of all
  $d$-element subsets of $[n]$.

  First, we apply Erd\H{o}s-Szekeres to the set $[n]$ of minimal
  elements of $P$ relative to the order of these elements in the
  linear orders in $\cgB$ to obtain a subset $A$ of $[n]$ that 
  appears either in increasing order or decreasing order for each
  $L_{i}\in\cgB$.  Using our previous remarks concerning duals of
  linear orders in $\cgB$, if $n$ is sufficiently large, we may assume
  there is a subset $A$ of $[n]$ with $|A|=2d+1$ so that the
  restriction of $L_j$ to $A$ is exactly the same as the restriction
  of $L_k$ to $A$ whenever $1\le j<k\le s$.  After relabeling, we may
  assume $A=\{1,2,\dots,2d+1\}$ so that $1<2<3<\dots<2d+1$ in $L_j$
  for each $j=1,2,\dots,s$.

  There are $2d$ ``gaps'' between consecutive elements of $A$ of the
  form $(i,i+1)$.  One of $i$ and $i+1$ is even and the other is odd.
  Now consider the maximal element $S=\{2,4,6,\dots,2d\}$. There 
  are $2d$ gaps and at most $2d-1$ linear orders in $\cgB$.  It follows 
  that there is some gap $(i,i+1)$ for which there is no integer $j$ with
  $ j\in [s]$ so that $i<S<i+1$ in $L_j$.  This implies that
  $q(i,S,\cgB)=q(i+1,S,\cgB)$ so that
  $\tau(q(i,S,\cgB))=\tau(q(i+1,S,\cgB))$.  This is a contradiction since
  one of $i$ and $i+1$ is in $S$ while the other is not.
\end{proof}

We comment in closing that Theorem~\ref{thm:bdim-Delta} can be easily
strengthened to yield the following result.

\begin{theorem}\label{thm:max-down}
  For every $d\ge1$, there is a constant $c_d$ so that if $P$ is poset
  and every maximal element of $P$ is comparable with at most $d$
  elements of $P$, then $\bdim(P)\le c_d$.
\end{theorem}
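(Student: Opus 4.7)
The plan is to induct on $d$, using Theorem~\ref{thm:bdim-Delta} as the engine of a single peeling step. For the base case $d=1$, each maximal element is comparable with at most one other, so $P$ has height at most~$2$, and Theorem~\ref{thm:bdim-Delta} directly gives $\bdim(P)\le 2$; take $c_1=2$.

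For the inductive step, let $P$ satisfy the hypothesis for $d\ge 2$, let $M_1$ be the set of maximal elements, and set $P'=P\setminus M_1$. I would first verify that $P'$ satisfies the hypothesis with parameter $d-1$: if $y$ is maximal in $P'$, then $y<x$ for some $x\in M_1$, the down-set of $y$ in $P'$ coincides with its down-set in $P$ (since nothing in $M_1$ sits below $y$), and together with $\{y\}$ it embeds into the down-set of $x$ in $P$, which has at most $d$ elements. Hence the maximal elements of $P'$ each have at most $d-1$ elements below them, and by induction $\bdim(P')\le c_{d-1}$. Next I would flatten $P$ to the height-$2$ poset $Q$ on the same ground set obtained by keeping only the comparabilities between $M_1$ and $P'$; the down-sets of the maximal elements of $Q$ are unchanged from $P$, so Theorem~\ref{thm:bdim-Delta} yields $\bdim(Q)\le 2d$.

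The last step is to stitch $(\cgB_{P'},\tau_{P'})$ and $(\cgB_Q,\tau_Q)$ into one Boolean realizer of $P$. Extend each $L\in\cgB_{P'}$ to $P$ by placing $M_1$ on top in any fixed way, and append three ``class-indicator'' linear orders whose bit patterns distinguish the four possibilities for an ordered pair $(x,y)$: both in $M_1$; both in $P'$; $x\in P'$ and $y\in M_1$; or $x\in M_1$ and $y\in P'$. A workable choice is $L_1$: $P'$ in some order $\sigma$, then $M_1$; $L_2$: $P'$ in the reverse of $\sigma$, then $M_1$; and $L_3$: $M_1$, then $P'$ in $\sigma$. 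A short case check shows the triple of bits at $(L_1,L_2,L_3)$ identifies the case. The combined $\tau$ reads these three bits to pick a case, outputs~$0$ for pairs entirely in $M_1$, applies $\tau_{P'}$ to the $\cgB_{P'}$-bits for pairs in $P'$, and applies $\tau_Q$ to the $\cgB_Q$-bits for cross-pairs. This yields the recurrence $c_d\le c_{d-1}+2d+3$, which is certainly bounded, and in fact gives $c_d=O(d^2)$.

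The main obstacle is the stitching: because $\tau$ sees only bits and not element identities, it must recover the class of a pair purely from the bit string before choosing which sub-realizer to consult. The three class-indicator orders dispose of this with constant overhead per induction step; if that trick failed, one could instead spend $O(d)$ orders per step to binary-encode layer membership and still obtain a bounded $c_d$.
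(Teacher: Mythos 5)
Your argument is correct, and there is nothing to compare it against: the paper gives no proof of Theorem~\ref{thm:max-down}, only the remark that it follows by ``easily strengthening'' Theorem~\ref{thm:bdim-Delta}, and your layer-peeling induction is a complete realization of exactly that idea. In particular your stitching step is sound --- pairs inside $\Max(P)$ produce the pattern $(b,b,b)$ on the three indicator orders, pairs inside $P-\Max(P)$ produce $(b,\bar b,b)$, and the two cross cases produce $(1,1,0)$ and $(0,0,1)$, so the case is recoverable from the bits alone --- with only the trivial degenerate case of $P$ an antichain (empty $P'$) left unmentioned.
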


Furthermore, we note that Trotter and Walczak~\cite{bib:TroWal} proved
that if $P$ is a poset and $\ldim(P)\le 3$, then $\bdim(P)\le 8443$.
However, they also proved that for every $d\ge1$, there is a poset $P$
with $\bdim(P)\ge d$ and $\ldim(P)\le 4$.  Accordingly, in general,
neither Boolean dimension nor local dimension is bounded in terms of
the other.

\section{Basic Inequalities for Dimension}

Dimension, local dimension and Boolean dimension are all monotonic
parameters.  But, it is natural to ask whether they are
``continuous'', i.e., if $Q$ is a subposet of $P$ obtained by removing
a single point from $P$, are the values for $Q$ close to the
corresponding values for $P$?

For dimension, the following elementary result was proved by
Hiraguchi~\cite{bib:Hira}.  We include a short proof as the basic idea
will be important in the discussion to follow.

\begin{theorem}\label{thm:dim-1pt}
  Let $P$ be a poset on two or more points and let $x$ be an element
  of $P$. Then $\dim(P)\le 1+\dim(P-\{x\})$.
\end{theorem}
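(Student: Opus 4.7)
The plan is to take a realizer $\{L_1, L_2, \ldots, L_d\}$ of $P - \{x\}$ of size $d = \dim(P - \{x\})$, extend each $L_i$ to a linear extension $L_i^+$ of $P$ by inserting $x$ at some valid position, and adjoin one extra linear extension $L_{d+1}$ of $P$, producing a realizer of $P$ of size $d + 1$.

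For each $i \in [d]$, the valid insertion positions for $x$ in $L_i$ are exactly those lying above every $y \in L_i$ with $y < x$ in $P$ and below every $y \in L_i$ with $y > x$ in $P$; any such choice yields a linear extension of $P$. Because $L_i^+$ preserves the relative order of elements of $P - \{x\}$, and $\{L_1, \ldots, L_d\}$ reverses every incomparable pair of $P - \{x\}$, the family $\{L_1^+, L_2^+, \ldots, L_d^+\}$ already reverses every incomparable pair $(u, v) \in \Inc(P)$ with $u, v \neq x$.

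The remaining incomparable pairs, all of the form $(x, y)$ with $y \parallel x$ in $P$, are to be handled by the extra extension $L_{d+1}$, chosen together with the insertion positions in the $L_i^+$'s so that for every $y$ incomparable with $x$ we have $x > y$ in at least one of the $d + 1$ extensions and $y > x$ in at least one other. Since no $P$-relation forces an incomparable-to-$x$ element to lie on a specific side of $x$, there is ample freedom to construct $L_{d+1}$ with the required structure.

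The main obstacle is the coordination of these choices: the position of $x$ inside $L_i^+$ simultaneously fixes the relative order of $x$ with many elements of $P - \{x\}$ at once. Inserting $x$ in a fixed ``extreme'' position across every $L_i^+$ (for instance, always as high as possible, just below the lowest element of $\{y : y > x\}$) can fail to reverse one direction of certain $(x, y)$ pairs, so one needs either a more delicate insertion strategy across the $L_i^+$'s or a careful construction of $L_{d+1}$ to cover whatever remains. Either way, the single additional extension $L_{d+1}$ turns out to be sufficient because, unlike the $L_i$'s in the original realizer, it is free of any prior commitment regarding $x$ and can be tailored to the handful of critical pairs involving $x$.
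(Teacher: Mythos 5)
There is a genuine gap: you never actually specify the insertion positions for $x$ or construct $L_{d+1}$, and the closing assertion that one extra extension ``turns out to be sufficient'' is exactly the step that needs proof. Worse, the claim is false if the insertions are not coordinated. Take $P$ with ground set $\{x,y,y'\}$ where $y<y'$ and $x$ is incomparable to both. Then $\dim(P-\{x\})=1$ with realizer $L_1=y<y'$. If you insert $x$ in the middle, $L_1^+=y<x<y'$, then the pairs $(y,x)$ and $(x,y')$ both remain unreversed, and no single linear extension of $P$ can reverse both, since $y'<x<y$ contradicts $y<y'$ in $P$. So ``ample freedom'' is not enough; a wrong choice of insertion position forces a second extra extension, and your argument gives no reason why some choice avoids this.

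The missing idea, which is the whole content of the paper's proof, is to use the two \emph{extreme} placements and to note that each one handles \emph{all} incomparable pairs involving $x$ in one direction at once. Writing $D(x)$ and $U(x)$ for the strict down-set and up-set of $x$ and $Y$ for the ground set of $Q=P-\{x\}$, the paper takes one of the original extensions, say $L_d$, and forms
\begin{align*}
M_d&=L_d(D(x))<x<L_d(Y-D(x)),\\
M_{d+1}&=L_d(Y-U(x))<x<L_d(U(x)).
\end{align*}
Both are linear extensions of $P$ because $D(x)$ is a down-set and $U(x)$ is an up-set. Every $y$ incomparable to $x$ lies in $Y-D(x)$ and in $Y-U(x)$, so $y>x$ in $M_d$ and $x>y$ in $M_{d+1}$: both directions of every pair involving $x$ are covered by these two orders alone, and their common restriction to $Q$ is still $L_d$, so together with arbitrary extensions $M_1,\dots,M_{d-1}$ of $L_1,\dots,L_{d-1}$ one gets a realizer of size $d+1$. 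Your sketch circles this construction (you even consider the ``as high as possible'' placement) but stops at naming the obstacle rather than resolving it.
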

\begin{proof}
  Let $Q=P-\{x\}$, let $d=\dim(Q)$ and let $\{L_1,L_2,\dots,L_d\}$ be
  a realizer of $Q$. For an integer $i\in [d-1]$, let $M_i$ be any
  linear extension of $P$ such that the restriction of $M_i$ to $Q$ is
  $L_i$.  Let $Y$ be the ground set of $Q$ and let $D(x)$ consist of
  all points of $Q$ which are less than $x$ in $P$.  Dually, let
  $U(x)$ consist of all points of $Q$ which are greater than $x$ in
  $P$. Define $M_{d}$ and $M_{d+1}$ by:

  \begin{align*}
    M_d&=L_d(D(x))<x<L_d(Y-D(x))\quad\text{and}\\
    M_{d+1}&=L_d(Y-U(x))<x<L_d(U(x)).
  \end{align*}
  Clearly, $\{M_1,M_2,\dots,M_{d+1}\}$ is a realizer of $P$.
\end{proof}

We now prove the analogous inequality for local dimension, although
the argument is a bit more complex.

\begin{theorem}\label{thm:ldim-1pt}
  Let $P$ be a poset on two or more points and let $x$ be an element
  of $P$.  Then $\ldim(P)\le 1+\ldim(P-\{x\})$.
\end{theorem}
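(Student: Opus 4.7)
The plan is to mirror the Hiraguchi-style construction in the proof of Theorem~\ref{thm:dim-1pt}, with added care to control multiplicities. Write $Q = P - \{x\}$ and $\ell = \ldim(Q)$, and let $\cgL = \{L_1, \ldots, L_t\}$ be a local realizer of $Q$ with $\mu(\cgL) \le \ell$. Let $D = D(x)$, $U = U(x)$, and $I = Q \setminus (D \cup U)$ denote the down-set, up-set, and incomparability set of $x$ in $P$.

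The key observation is that, for any ple $L_i \in \cgL$, there is a non-empty interval of positions at which $x$ may be inserted to obtain a ple of $P$: namely, above every element of $D \cap L_i$ and below every element of $U \cap L_i$ in the order of $L_i$. Such an interval exists because the relation $D < U$ in $P$ forces $D \cap L_i$ to precede $U \cap L_i$ in $L_i$. By varying the insertion point within this interval, the resulting extension can be made to witness $x > y$ or $y > x$ for any $y \in I \cap L_i$, depending on whether $y$ falls below or above the insertion point.

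Following the template of Hiraguchi's argument, I would designate a specific ple $L_t$ and replace it with two ples $M_t^-$ and $M_t^+$ obtained by inserting $x$ at the lowest and highest admissible positions of $L_t$ respectively, analogously to the replacement of $L_d$ by $M_d, M_{d+1}$ in the proof of Theorem~\ref{thm:dim-1pt}. Together, $M_t^-$ and $M_t^+$ witness both directions of $x \parallel y$ for every $y \in I \cap L_t$, as well as the comparabilities of $x$ with the elements of $(D \cup U) \cap L_t$. To handle the elements of $Q \setminus L_t$, I would add a linear extension $L^*$ of $P$ (with $I$ partitioned as $I_1 < x < I_2$), which witnesses all comparabilities involving $x$ and one direction of each remaining incomparability $x \parallel y$.

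The main obstacle is bookkeeping: the naive family $\{L_1,\ldots,L_{t-1}\} \cup \{M_t^-, M_t^+, L^*\}$ can increase the multiplicity of an element $y \in L_t$ by as much as $2$ (one extra copy from the duplication of $L_t$, another from $L^*$), which threatens the bound $\ell+1$. To remedy this, I expect the argument will either select $L_t$ carefully so that the elements it contains have enough multiplicity-slack in $\cgL \setminus \{L_t\}$, or avoid using $L^*$ in full generality by extending a small number of other $L_i$'s (adding only $x$) to cover the missing ``other direction'' witnesses for $y \in I \setminus L_t$. The delicate step—and the main difference from the proof of Theorem~\ref{thm:dim-1pt}—is showing that such a selection always exists, exploiting the fact that each $y \in I$ appears in at most $\ell$ ples of $\cgL$ and that inserting $x$ into $L_i$ does not change the multiplicity of any $y \ne x$.
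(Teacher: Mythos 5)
Your proposal stops exactly where the real work begins. You correctly diagnose that the naive family $\{L_1,\dots,L_{t-1}\}\cup\{M_t^-,M_t^+,L^*\}$ raises the multiplicity of elements of $L_t$ by $2$, but the two remedies you then gesture at are left unexecuted, and the first of them cannot work: one may assume from the outset that $\mu(y,\cgL)=\ell$ for \emph{every} $y\in Q$ (duplicate ple's if necessary), so there is never any ``multiplicity-slack'' to exploit. The second remedy---inserting $x$ into several of the $L_i$ rather than adding a global linear extension $L^*$---is the right direction, but it leaves two problems you do not address: (i) an element $y\in I(x)$ may receive the \emph{same} orientation relative to $x$ in every ple into which $x$ is inserted, so one direction of the reversal is still missing; and (ii) once $L^*$ is dropped, some elements of $Q$ may lie in none of the chosen ple's at all. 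There is also a smaller error: inserting $x$ at the lowest and highest \emph{admissible positions} of $L_t$ does not witness both directions of $x\parallel y$ for every $y\in I\cap L_t$ (a $y$ sitting below the top element of $D(x)\cap L_t$ in $L_t$ lies below every admissible insertion point, so $y>x$ is never achieved); one must rearrange $L_t$ into blocks as in Hiraguchi's proof, not merely insert.

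The paper closes these gaps as follows. After normalizing so that $\mu(y,\cgL)=\ell$ for all $y$, fix $y_0$ and let $L_1,\dots,L_\ell$ be the ple's containing $y_0$; since every $u\in Q$ must co-occur with $y_0$ in some ple (to satisfy the realizer conditions for the pair $(u,y_0)$), every element of $Q$ lies in at least one of $L_1,\dots,L_\ell$. Insert $x$ into $L_1,\dots,L_{\ell-1}$ arbitrarily to get $M_1,\dots,M_{\ell-1}$, and let $W$ (resp.\ $Z$) be the set of elements of $I(x)$ outside $L_\ell$ that end up below (resp.\ above) $x$ in every $M_i$ containing them; note $W\cap Z=\emptyset$. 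Then replace $L_\ell$ by \emph{two} ple's $N_1,N_2$ in Hiraguchi block form, appending $W$ to the upper block of $N_1$ and $Z$ to the lower block of $N_2$. Each element of $L_\ell$ trades one appearance for two (net $+1$), each element of $W\cup Z$ gains exactly one appearance, and $x$ appears $\ell+1$ times, giving $\mu\le\ell+1$. Without this mechanism, or an equivalent one, your argument does not establish the bound.
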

\begin{proof}
  Let $Q=P-\{x\}$.  We show that if $d=\ldim(Q)$, then
  $\ldim(P)\le d+1$.

  Now let $\cgL$ be a local realizer of $Q$. Clearly, we may assume
  that $\mu(y,\cgL)=d$ for every $y\in Q$. Let $y_0\in Q$ and relabel
  the $\ple$'s in $\cgL$ as $\{L_1,L_2,\dots,L_t\}$ so that
  $y_0\in L_i$ when $i\in[d]$. For each $i\in[d]$, let $Q_i$ be the
  subposet of $P$ determined by the ground set of $L_i$. It follows
  that if $u\in Q$, then $u\in Q_{i}$ for some $i\in[d]$.  Then for
  each $i\in[d]$, let $M_i$ be a linear extension of the subposet of
  $P$ determined by elements of $Q_{i}$ and $x$ for which the
  restriction of $M_i$ to $Q_i$ is $L_i$.

  Let $I(x)=\{u\in P:x\parallel u$ in $P\}$. If $I(x)=\emptyset$,
  then \[\{M_i:i\in[d]\}\cup\{L_j:d+1\le j\le t\}\] is a local
  realizer for $P$ and this would imply that $\ldim(P)=d$. So we may
  assume that $I(x)\neq\emptyset$.

  Let $W=\{w\in I(x): w\not\in Q_d, x>w$ in $M_i$ for all $i\in[d-1]$
  with $w\in Q_i\}$. Also, set $Z= \{z\in I(x): z\not\in Q_d, x<z$ in
  $M_i$ for all $i\in[d-1]$ with $z\in Q_i\}$. Note that
  $W\cap Z=\emptyset$.

  The ple $L_d$ has the block form $A<\{x\}<B$. Then let
  $A'= A\cap I(x)$ and $B'=B\cap I(x)$. We then define $\ple$'s $N_1$
  and $N_2$ as follows: The ground set of $N_1$ is
  $\{x\}\cup Q_d\cup W$ and the ground set of $N_2$ is
  $\{x\}\cup Q_d\cup Z$. These two $\ple$'s will have the following
  block form:
  \begin{align*}
    N_1&=A-A'<\{x\}<A'\cup B\cup W,\\
    N_2&=A\cup B'\cup Z<\{x\}< B-B'.
  \end{align*}
  Note that no element in $W$ is less than an element in $A-A'$, or
  else it would be comparable to $x$. The analogous assertion holds
  for elements in $Z$ and $B-B'$. Furthermore, the ordering of
  elements of $A-A'$ in $N_1$ is equal to the ordering of $A-A'$ in
  $L_{d}$. A similar assertion holds for elements of $A'\cup B$ in
  $N_{1}$, $B-B'$ in $N_{2}$, and $A\cup B'$ in $N_{2}$ when comparing
  to the ordering in $L_{d}$. It follows that:
  \[
    \cgL'=\{M_i:1\le i<d\}\cup\{L_j:d<j\le t\}\cup\{N_1,N_2\}
  \]
  is a local realizer for $P$ with $\mu(P,\cgL')=d+1$.
\end{proof}

We do not know whether the analogous result holds for Boolean
dimension.  In fact, here is the best inequality we have been able to
obtain concerning the removal of a single point.

\begin{theorem}\label{thm:bdim-1pt}
  Let $P$ be a poset on two or more points and let $x$ be an element
  of $P$.  Then $\bdim(P)\le 3+\ldim(P-\{x\})$.
\end{theorem}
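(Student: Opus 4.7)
The plan is to adapt the single-point-removal strategy of Theorems~\ref{thm:dim-1pt} and \ref{thm:ldim-1pt}: start with a local realizer of $Q = P - \{x\}$ and extend it to a Boolean realizer of $P$ by adding a small number of auxiliary linear orders that handle the new point. Set $d = \ldim(Q)$ and fix a local realizer $\cgL = \{L_1, L_2, \ldots, L_t\}$ of $Q$ with $\mu(\cgL) = d$. After appending isolated copies of elements if necessary, we may assume every $u \in Q$ satisfies $\mu(u, \cgL) = d$, so the occurrences of each element are indexed $\alpha = 1, \ldots, d$. The goal is to produce a Boolean realizer $(\cgB, \tau)$ of $P$ with $|\cgB| = d + 3$.

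First, I would build $d$ linear orders $M_1, \ldots, M_d$ on the ground set of $P$ from $\cgL$. For each $\alpha \in [d]$, assemble $M_\alpha$ by gathering the positional data of $u$'s $\alpha$-th occurrence for each $u \in Q$: elements whose $\alpha$-th occurrences share a ple $L_j$ inherit the relative $L_j$-order, while elements whose $\alpha$-th occurrences lie in different ple's are placed into blocks indexed by ple index. Insert $x$ uniformly in every $M_\alpha$, for example at the top. Next, to encode $x$'s comparabilities, set $D(x) = \{y \in Q : y < x\}$, $U(x) = \{y \in Q : y > x\}$, and $I(x) = Q - D(x) - U(x)$, and construct three auxiliary linear orders
\[
  N_1 = D(x) < \{x\} < (Q - D(x)), \qquad N_2 = (Q - U(x)) < \{x\} < U(x),
\]
together with a flagging order $N_3$ in which $x$ sits at the top above $Q$ in a fixed linear order, chosen so the $N_3$-bit helps distinguish $x$-involving queries from purely $Q$-queries. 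Then define $\tau : \bfII^{d+3} \to \{0, 1\}$ by case analysis on the resulting $(d+3)$-bit string: the $N_1$- and $N_2$-bits determine, for an $x$-involving query, whether the $Q$-element lies in $D(x)$, $I(x)$, or $U(x)$; the $N_3$-bit flags which case the query belongs to; and the $M_\alpha$-bits (in conjunction with the $N$-bits) recover the $Q$-comparison for pairs $u, v \in Q$.

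The main obstacle is verifying that the $M_\alpha$ linear orders together with $\tau$ actually recover every $Q$-comparison correctly. Local dimension does not in general bound Boolean dimension: the paper noted earlier that Trotter and Walczak exhibited posets with $\ldim\le 4$ and $\bdim$ arbitrarily large, so the merging of ple's into the $M_\alpha$'s cannot suffice in isolation. The argument must leverage the auxiliary orders $N_1, N_2, N_3$ not merely to locate $x$ but also to supply supplementary information that disambiguates the $M_\alpha$-bit patterns for $Q$-pairs and prevents collisions with the bit patterns arising from $x$-involving pairs. Carrying this out amounts to a careful, case-by-case design of both the merging rule for the $M_\alpha$'s and the Boolean function $\tau$, making essential use of the fact that only a single new point is being added.
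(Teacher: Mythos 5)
Your proposal has a fatal gap, and in identifying the obstruction you have actually stumbled onto the key point without drawing the right conclusion: the inequality as literally printed, $\bdim(P)\le 3+\ldim(P-\{x\})$, is \emph{false}, and the occurrence of $\ldim$ in the theorem statement is evidently a typo for $\bdim$ (the paper's own proof begins ``Let $(\cgB,\tau)$ be a Boolean realizer for $Q=P-\{x\}$ with $|\cgB|=\bdim(Q)=d$,'' and the open-problem list asks how much \emph{Boolean} dimension can drop when one point is removed, noting the answer is $1$, $2$ or $3$). The Trotter--Walczak examples you yourself cite give, for every $d$, a poset $P$ with $\ldim(P)\le 4$ and $\bdim(P)\ge d$; since $\ldim(P-\{x\})\le\ldim(P)\le 4$ for any $x$, the printed inequality would force $\bdim(P)\le 7$ for all such posets, a contradiction. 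Consequently the step you defer to ``a careful, case-by-case design of both the merging rule for the $M_\alpha$'s and the Boolean function $\tau$'' is not merely delicate but impossible: $d+3$ linear orders manufactured from a local realizer of size $d=\ldim(Q)$ would bound $\bdim(P)$ by a function of $\ldim(P-\{x\})$, which cannot happen. Your instinct that merging the partial linear extensions into the $M_\alpha$'s ``cannot suffice in isolation'' was correct, but the remedy is to strengthen the hypothesis, not to search for a cleverer $\tau$.

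For comparison, the paper's proof establishes $\bdim(P)\le 3+\bdim(P-\{x\})$ as follows: starting from a Boolean realizer $(\cgB,\tau)$ of $Q=P-\{x\}$ with $\cgB=\{L_1,\dots,L_d\}$ and $d=\bdim(Q)$, set $M_i=x<L_i$ for $i\in[d]$ and $M_{d+1}=x<L_1^*$, and append the two linear extensions $M_{d+2}$, $M_{d+3}$ from the Hiraguchi argument (your $N_1$ and $N_2$). The bits contributed by $M_1$ and $M_{d+1}$ agree exactly when one of the two queried elements is $x$ --- this dual pair is the flag that your single order $N_3$, with $x$ simply placed on top, cannot provide --- after which $M_{d+2}$ and $M_{d+3}$ decide whether the other element lies in $D(x)$, $U(x)$ or $I(x)$, and for pairs avoiding $x$ the original $\tau$ applied to the first $d$ coordinates answers the query. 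The remainder of your outline matches the paper once the local realizer is replaced by a Boolean realizer.
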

\begin{proof}
  Let $(\cgB,\tau)$ be a Boolean realizer for $Q=P-\{x\}$, with
  $|\cgB|=\bdim(Q)=d$.  Label the linear orders in $\cgB$ as
  $\{L_1,L_2,\dots,L_d\}$.  For each $i\in[d]$, let $M_i$ be the
  linear order on the ground set of $P$ defined by setting
  $M_i=x<L_i$.  Next, we set $M_{d+1}=x<L_1^*$.

  Now let $L$ be any linear extension of $P$.  With a shift in
  subscripts and letting $Y$ be the ground set of $Q$, we follow the proof of Theorem~\ref{thm:dim-1pt} and
  set:
  \begin{align*}
    M_{d+2}&=L(D(x))<x<L(Y-D(x))\quad\text{and}\\
    M_{d+3}&=L(Y-U(x))<x<L(U(x)).
  \end{align*}
  Note that $M_{d+2}$ and $M_{d+3}$ are linear extensions of $P$.

  Then set $\hat{\cgB}=\{M_1,M_2,\dots, M_{d+3}\}$.  For a pair
  $(u,v)$ of distinct points of $P$, we claim that we can always
  determine whether $u$ is less than $v$ in $P$ based on the bits in
  the string $q(u,v,\hat{\cgB})$.  First, we consider the bits
  associated with the linear orders in
  $\{M_1,M_2,\dots,M_{d}, M_{d+1}\}$.  If one of $u$ and $v$ is $x$,
  these bits are constant; otherwise they are not.  Furthermore, if
  one of $u$ and $v$ is $x$, we can tell whether $u<v$ in $P$ from the
  bits associated with the linear extensions $M_{d+2}$ and $M_{d+3}$.
  If neither $u$ nor $v$ is $x$, then we can tell whether $u$ is less
  than $v$ in $P$ by applying $\tau$ to the bits associated with
  $\{M_1,M_2,\dots,M_d\}$.
\end{proof}

\subsection{Inequalities involving Width}

In his classic 1950 paper~\cite{bib:Dilw}, Dilworth observed in a
first page footnote that an immediate consequence of his chain
partitioning theorem is that the Dushnik-Miller dimension of a poset
is at most its width.  The standard examples show that this elementary
inequality is best possible.  To date, we have not been able to
determine the maximum local dimension of a poset of width $w$
($w\geq4$). While it is bounded above by $w$, we do not know if this
is a tight upper bound. The analogous question for Boolean dimension
also remains open.

Although it may seem surprising, we have been able to settle analogous
questions for more complex inequalities involving width.  As one such
example, the following inequality was proved by
Trotter~\cite{bib:Trot-2}.

\begin{theorem}\label{thm:P-max(P)}
  Let $P$ be a poset and let $A=\Max(P)$. If $P-A$ is non-empty and
  has width $w$, then $\dim(P)\le w+1$ and this is sharp.
\end{theorem}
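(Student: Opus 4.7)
The plan is to apply Dilworth's theorem to $Q = P - A$, promote each of the $w$ chains to form a linear extension of $P$, and add one more extension placing $A$ on top in reverse order to handle pairs among maximal elements.

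By Dilworth's theorem, partition $Q$ as a disjoint union of chains $C_1, C_2, \ldots, C_w$. For each $i \in [w]$, form a binary relation on $P$ from $P$'s order by adjoining $u < x$ for every $x \in C_i$ and every $u \in P$ with $u \parallel x$ in $P$, and let $L_i$ be any linear extension of its transitive closure. To see that this closure is acyclic, observe that for every $a \in P$ the set $\{c \in C_i : a > c \text{ in } P\}$ is a down-set in the chain $C_i$, by $P$'s transitivity. A hypothetical minimal cycle must use at least one adjoined edge and hence visit $C_i$. Between consecutive $C_i$-elements $v_j, v_{j'}$ in cyclic order of the cycle, all intermediate edges are $P$-edges since adjoined edges target $C_i$; whether the last edge into $v_{j'}$ is itself a $P$-edge or an adjoined one, the down-set property forces $v_j < v_{j'}$ in the $C_i$-order. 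Stringing these around the cycle produces a cycle in $C_i$, which is impossible.

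The resulting extension $L_i$ satisfies: for every $x \in C_i$ and every $y \in P - \{x\}$, $y < x$ in $L_i$ if and only if $y \not> x$ in $P$. Hence, for any incomparable pair $\{x, y\}$ in $P$ with $x \in C_i$, the extension $L_i$ realizes the ordering $x > y$. It follows that $L_1, \ldots, L_w$ collectively reverse every incomparable pair $\{x, y\}$ with $x, y \in Q$ (using $L_i$ for $x \in C_i$ and $L_j$ for $y \in C_j$) as well as every incomparable pair $\{x, a\}$ with $x \in Q$ and $a \in A$ in the direction $x > a$.

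The only incomparable pairs not yet reversed are pairs $(a, x)$ with $a \in A, x \in Q, a \parallel x$ in the direction $a > x$, and pairs within $A$ in both directions. Define $L_{w+1}$ to be any linear extension of $P$ in which all of $A$ sits above all of $Q$, with the order on $A$ being the reverse of $L_1|_A$; this is well-defined since $A$ is an antichain of maximal elements and so any order on $A$ extends to a valid linear extension of $P$. In $L_{w+1}$, every $a \in A$ lies above every $x \in Q$, yielding $a > x$; and for any distinct $a, a' \in A$, one of $L_1$ and $L_{w+1}$ orders them one way and the other orders them the other way. Therefore $\{L_1, L_2, \ldots, L_{w+1}\}$ is a realizer of $P$, so $\dim(P) \le w + 1$.

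The main obstacle is the acyclicity check for $L_i$, which depends essentially on the down-set structure described above. Sharpness---an example where $P - A$ has width $w$ and $\dim(P) = w + 1$---is established in \cite{bib:Trot-2}.
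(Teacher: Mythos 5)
Your proof is correct. The paper does not prove this theorem itself but cites Trotter~\cite{bib:Trot-2} for both the inequality and its sharpness; your argument (Dilworth decomposition of $P-\Max(P)$, one linear extension per chain placing that chain over its incomparabilities, plus one extension with $\Max(P)$ on top in reversed order) is the standard proof of that cited result, and the acyclicity verification via the down-set property is sound.
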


In~\cite{bib:Trot-2}, a family $\{P_w:w\ge 2\}$ of posets is
constructed to show that the inequality in Theorem~\ref{thm:P-max(P)}
is tight for Dushnik-Miller dimension. These posets are shown in
Figure~\ref{fig:P-max(P)}.

\begin{figure}
\centering
\begin{tikzpicture}[scale=0.9]

\begin{scope}[nodes={circle,fill=white,draw=black,line width=1pt,inner sep=0pt, minimum size=7pt}]
\foreach \i in {1,2,3,4}{
	\node (x\i) at (2*\i,0) {};
	\node (y\i) at (2*\i,2) {};
	\node (z\i) at (2*\i,4) {};
}
\node[label={[label distance=2.5mm]below:$a_1$}] (a1) at (5,6) {};
\node (a2) at (10,2) {};
\end{scope}

\node[label=above:$a_{2}$] at (a2) {};

\node[label=below:$x_{1}$] at (x1) {};
\node[label=below:$x_{2}$] at (x2) {};
\node[label=below:$x_{3}$] at (x3) {};
\node[label=below:$x_{4}$] at (x4) {};

\node[label=above:$y_{1}$] at (y1) {};
\node[label=above:$y_{2}$] at (y2) {};
\node[label=above:$y_{3}$] at (y3) {};
\node[label=above:$y_{w}$] at (y4) {};

\node[label=below:$z_{1}$] at (z1) {};
\node[label=below:$z_{2}$] at (z2) {};
\node[label=below:$z_{3}$] at (z3) {};
\node[label=below:$z_{w}$] at (z4) {};

\path[name=xx] (x3) -- (x4) node [midway] {\Large$\cdots$};
\path[name=yy] (y3) -- (y4) node [midway] {\Large$\cdots$};
\path[name=zz] (z3) -- (z4) node [midway] {\Large$\cdots$};

\begin{scope}[line width=1pt]
\foreach \i in {1,2,3,4}{
\draw (x\i) edge (y\i);
\draw (x\i) edge (a2);
\draw (z\i) edge (a1);
	\foreach \j in {1,2,3,4}{
		\ifthenelse{\NOT \i = \j}{\draw (y\i) edge (z\j);}{}
	}
}
\end{scope}

\node (P) at (5,-1) {\Large $P_{w}$};

\end{tikzpicture}
\caption{Showing the Inequality is Tight}
\label{fig:P-max(P)}
\end{figure}
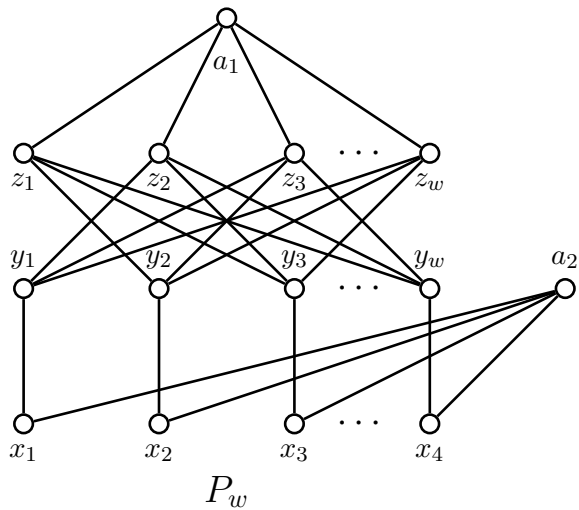

As noted in~\cite{bib:Trot-2}, it is straightforward to verify that
for each $w\ge3$, the poset $P_w$ is $(w+1)$-irreducible. However, it
is an easy exercise to show that all the posets in this family have
local dimension at most~$4$, and they have Boolean dimension at
most~$4$.  Therefore it remains to answer the following: Is the
inequality in Theorem~\ref{thm:P-max(P)} tight for local dimension or
for Boolean dimension? We will explain why the answer for both
parameters is yes, but we elect to postpone the argument until we have
discussed a second inequality involving width.

The following inequality was also proved in~\cite{bib:Trot-2}.

\begin{theorem}\label{thm:P-A}
  Let $A$ be an antichain in a poset $P$ with $P-A$ non-empty.  If the
  width of the subposet $P-A$ is $w$, then $\dim(P)\le 2w+1$.
\end{theorem}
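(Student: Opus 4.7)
The plan is to construct a realizer of $P$ consisting of $2w+1$ linear extensions. Set $Q = P - A$. Since $Q$ has width $w$, Dilworth's theorem yields $\dim(Q) \le w$; fix a realizer $\{L_1, L_2, \ldots, L_w\}$ of $Q$. I aim to produce, for each $i \in [w]$, two linear extensions $M_i^+$ and $M_i^-$ of $P$ that restrict to $L_i$ on $Q$, together with one auxiliary extension $L^*$.

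For each $i \in [w]$, $M_i^+$ will be formed from $L_i$ by greedily inserting each $a \in A$ at its highest admissible position --- namely, immediately below the $L_i$-minimum of $U(a) \cap Q$, where $U(a)$ denotes the up-set of $a$ in $P$; if $U(a) \cap Q = \emptyset$, place $a$ at the very top of $L_i$. Dually, $M_i^-$ will insert each $a$ at its lowest admissible position, just above the $L_i$-maximum of $D(a) \cap Q$. When several elements of $A$ compete for the same insertion slot, ties will be broken in $M_i^+$ using a fixed linear order $L_A$ on $A$ and in $M_i^-$ using its reverse $L_A^*$. The auxiliary extension $L^*$ will be any linear extension of $P$ in which the elements of $A$ appear in the order $L_A^*$; such an extension exists because $A$ is an antichain.

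To verify the realizer property, I would split $\Inc(P)$ into three cases. First, any incomparable pair with both endpoints in $Q$ is reversed by some $L_i$, and hence by both $M_i^+$ and $M_i^-$, which restrict to $L_i$ on $Q$. Second, for a mixed pair $(a, z) \in \Inc(P)$ with $a \in A$ and $z \in Q$, a direct check shows that $a > z$ in $M_i^+$ exactly when $z$ precedes the $L_i$-minimum of $U(a) \cap Q$ in $L_i$; applying the realizer property of $\{L_1, \ldots, L_w\}$ to suitable witnesses in $Q$ should yield both an $M_i^+$ with $a > z$ and an $M_i^-$ with $a < z$. Third, pairs $(a, a')$ lying entirely in $A$ will be reversed either by the tiebreaking choice in some $M_i^\pm$, or else by $L^*$ --- and this is precisely where the extra linear extension enters the count.

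The hard part will be the mixed case: verifying that the greedy ``high'' and ``low'' placements of all of $A$ in each $L_i$ really do suffice to reverse every incomparable pair with one endpoint in $A$ and one in $Q$. The key will be to relate, inside a single $L_i$, the position of $z$ to the $L_i$-positions of the extremal elements of $U(a) \cap Q$ and $D(a) \cap Q$, and to exploit both the fact that $\{L_1, \ldots, L_w\}$ realizes $Q$ and the antichain hypothesis on $A$, which ensures that the up-sets and down-sets of distinct elements of $A$ interact predictably enough across the $w$ orderings for the greedy placements to cover every required reversal.
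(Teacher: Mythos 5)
The paper does not actually prove this theorem---it is stated as a known result with a citation to Trotter's 1975 paper---so there is no in-paper argument to compare against; judged on its own, your proposal has a genuine gap at exactly the step you flag as ``the hard part,'' and the gap is fatal to the construction as described. Building the $M_i^{\pm}$ from a \emph{realizer} of $Q=P-A$ rather than from a Dilworth \emph{chain partition} of $Q$ asks too much of the realizer property. To get $a>z$ in $M_i^+$ you need a single index $i$ for which $z$ lies below \emph{every} element of $U(a)\cap Q$ in $L_i$ simultaneously, whereas a realizer only guarantees, for each individual incomparable pair, \emph{some} $L_i$ reversing it, and different elements of $U(a)\cap Q$ may demand different indices. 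Concretely, let $P=\{a,z,u_1,u_2\}$ with $a<u_1$, $a<u_2$ and all other pairs incomparable, and take $A=\{a\}$, so that $Q$ is a $3$-element antichain of width $w=3$. The family $L_1=u_1<z<u_2$, $L_2=u_2<z<u_1$, $L_3=u_1<u_2<z$ is a realizer of $Q$ of size $3$, yet no $L_i$ places $z$ below both $u_1$ and $u_2$; since $D(a)\cap Q=\emptyset$, every $M_i^+$ and every $M_i^-$ inserts $a$ below $z$, and $L^*$ is unconstrained on this pair, so the incomparable pair $(a,z)$ is never reversed. A secondary, more easily repaired issue concerns pairs inside $A$: two elements $a,a'$ that never compete for the same insertion slot can appear in the same relative order in all $2w$ extensions, and reversing only the tie-breaking order $L_A$ in $L^*$ does not guarantee their reversal; $L^*$ should instead dualize the order in which $A$ actually appears in one fixed extension, say $M_1^+$.

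The standard argument avoids the first problem entirely. Partition $Q$ into $w$ chains $C_1,\dots,C_w$ by Dilworth's theorem, and for each $i$ take two linear extensions of $P$: one in which every element of $C_i$ is pushed as low as possible (so that everything placed below $c\in C_i$ lies in the down-set of $c$), and one in which it is pushed as high as possible. Because $C_i$ is a chain, every $x$ with $x\parallel c$ for some $c\in C_i$ satisfies $x>c$ in the first and $x<c$ in the second, so these $2w$ extensions reverse every incomparable pair having at least one coordinate in $Q$; the final extension, dualizing the order of $A$ induced by one of them, handles the pairs inside $A$, for a total of $2w+1$. Your ``insert $A$ greedily into a realizer of $Q$'' idea could only be rescued by proving that a size-$w$ realizer with the required simultaneous-reversal property exists, and the natural way to manufacture one is again via the chain partition, so the detour through $\dim(Q)\le w$ buys nothing.
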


The argument to show that this inequality is best possible is more
complex, and a construction to accomplish this task is given by
Trotter in a separate paper~\cite{bib:Trot-1}.  We now analyze a
``one-sided'' variation of that construction.

For a pair $(n,w)$ of positive integers, we define a poset $P=P(n,w)$
containing $nw+n^{w}$ points. The subposet $P-\Max(P)$ contains $nw$
elements that form a disjoint sum of $w$ chains each of size $n$:
$C_{1}+C_{2}+\ldots+C_{w}$. We label the points of $C_{i}$ as
$x_{i,1}<x_{i,2}<\ldots<x_{i,n}$. For each sequence
$\sigma= (j_1,j_2,\dots,j_w)\in [n]^{w}$ of positive integers taken
from $[n]$, there is a maximal element $a_\sigma$ of $P$ with
$a_\sigma$ covering $x_{i,j_i}$ in $P$ for each $i\in[w]$.  Note that
there are $n^w$ maximal elements in $P$, and in the argument below, we
will denote the set $\Max(P)$ of maximal elements of $P$ just as $A$.

We also require a special case of a result which has become known as
the ``Product Ramsey Theorem,'' given in the classic
text~\cite{bib:GrRoSp} as Theorem~5 on page~113. However, we will use
slightly different notation in discussing this result.

When $A_1,A_2,\dots,A_t$ are $k$-element subsets of
$B_1,B_2,\dots,B_t$, respectively, we refer to the Cartesian product
$A_1\times A_2\times\dots\times A_t$ as a \textit{$\bfk^t$-grid} in
$B_1\times B_2\times\dots\times B_t$. Here is a formal statement of
the version of the Product Ramsey Theorem we will use.

\begin{theorem}\label{thm:prod-ramsey}
  Let $(k,t,h,r)$ be a $4$-tuple of positive integers with $h\geq
  k$. There exists a least positive integer $n_0=\PRam(k,t,h,r)$ such
  that if $n\ge n_0$, $g$ is an $\bfn^{t}$-grid and $\varphi$ is a
  coloring of all $\bfk^{t}$-grids in $g$ with $r$ colors, then there
  exists an $\bfh^{t}$-grid $g'$ in $g$ such that all $\bfk^{t}$-grids
  in $g'$ are mapped to the same color by $\varphi$.
\end{theorem}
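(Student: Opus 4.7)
The plan is to prove the theorem by induction on $t$, with the base case handled by classical Ramsey and the inductive step combining the inductive hypothesis (applied with an enlarged color palette) together with one additional application of Ramsey.

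For the base case $t=1$, a $\bfk^{1}$-grid in $B_1$ is simply a $k$-subset of $B_1$, and an $\bfh^{1}$-sub-grid is just an $h$-subset. The desired conclusion is therefore exactly Ramsey's theorem as recorded in the introduction, so we may take $\PRam(k,1,h,r)=\Ram(k,h;r)$.

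For the inductive step, suppose the result is known for $t-1$. I would set $q=\Ram(k,h;r)$ and $R=r^{\binom{q}{k}}$, and declare $n_0=\PRam(k,t-1,h,R)$, whose existence is provided by the inductive hypothesis. Given an $\bfn^{t}$-grid $g=B_1\times\cdots\times B_t$ with $n\ge n_0$ and a coloring $\varphi$ of its $\bfk^{t}$-grids by $r$ colors, first fix an arbitrary subset $B_t'\subseteq B_t$ with $|B_t'|=q$. To each $\bfk^{t-1}$-grid $G=A_1\times\cdots\times A_{t-1}$ in $B_1\times\cdots\times B_{t-1}$ assign as ``meta-color'' the function $\rho(G)$ sending each $k$-subset $A_t\subseteq B_t'$ to $\varphi(A_1\times\cdots\times A_{t-1}\times A_t)$. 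There are at most $R=r^{\binom{q}{k}}$ such functions, so $\rho$ is a legitimate coloring of the $\bfk^{t-1}$-grids in $B_1\times\cdots\times B_{t-1}$ by $R$ colors, and the inductive hypothesis produces $h$-subsets $H_1\subseteq B_1,\dots,H_{t-1}\subseteq B_{t-1}$ on which $\rho$ is constant. Inside $H_1\times\cdots\times H_{t-1}\times B_t'$, the color of any $\bfk^{t}$-grid then depends only on its last factor, so a final application of the ordinary Ramsey theorem to the induced $r$-coloring of the $k$-subsets of $B_t'$ yields a monochromatic $h$-subset $H_t\subseteq B_t'$, and $H_1\times\cdots\times H_t$ is the required $\bfh^{t}$-grid.

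The only delicate step is the setup of the meta-coloring: $B_t'$ must be fixed in advance of the recoloring, so that the number of possible values of $\rho(G)$ is the bounded quantity $r^{\binom{q}{k}}$ rather than something growing with $n$. Once that is in place, the inductive hypothesis absorbs the blow-up in colors and the last coordinate is dispatched by a single application of ordinary Ramsey, so no further obstacle arises.
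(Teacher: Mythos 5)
Your argument is correct. The paper does not prove this statement---it is quoted from Graham, Rothschild and Spencer's text---and what you have written is precisely the standard induction on $t$ found there: treat the first $t-1$ coordinates with an exponentially enlarged palette of ``meta-colors'' indexed by functions on the $k$-subsets of a pre-selected $q$-set in the last coordinate, then finish with one application of ordinary Ramsey. The only point worth making explicit is that $n_0$ must also be at least $q=\Ram(k,h;r)$ so that $B_t'$ can be chosen; this follows from the monotonicity $\PRam(k,t-1,h,R)\ge\Ram(k,h;R)\ge q$ (or one simply takes $n_0$ to be the maximum of the two quantities), so there is no gap.
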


With these preparatory remarks in hand, here is the result we will
prove.  This theorem shows that the inequality in
Theorem~\ref{thm:P-max(P)} is best possible for both local dimension
and Boolean dimension.

\begin{theorem}\label{thm:local:P-max(P)}
  For every $w\ge1$, there is an integer $n_0$ so that if $n\ge n_0$,
  then $\ldim(P(n,w))=\bdim(P(n,w))=w+1$. Note that $w$ is the width
  of $P(n,w)-\Max(P(n,w)).$
\end{theorem}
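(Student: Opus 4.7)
The upper bound $\bdim(P(n,w)),\ldim(P(n,w))\le w+1$ is immediate from Theorem~\ref{thm:P-max(P)}: the subposet $P(n,w)-\Max(P(n,w))$ is a disjoint union of $w$ chains (hence has width $w$), so $\dim(P(n,w))\le w+1$, and both parameters are dominated by Dushnik--Miller dimension. The content of the theorem is the matching lower bounds for $n$ sufficiently large in terms of $w$. In both cases the plan is to combine an Erd\H{o}s--Szekeres reduction with the Product Ramsey Theorem (Theorem~\ref{thm:prod-ramsey}) and a pigeonhole step that pits $2w$ necessary ``active coordinates'' against only $w$ available ones.

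For the Boolean lower bound, assume $\bdim(P(n,w))\le w$ with realizer $(\cgB=\{L_1,\dots,L_w\},\tau)$. Apply Erd\H{o}s--Szekeres to each pair $(L_j,C_i)$ to pass to large subsets $A_i\subseteq[n]$ on which $C_i$ is ordered monotonically by every $L_j$, with direction $\epsilon_{i,j}\in\{+,-\}$. Fix $\sigma\in A_1\times\cdots\times A_w$ and $i\in[w]$; because each coordinate $j$ contributes a single monotone step, the bit string $q(x_{i,k},a_\sigma,\cgB)$ takes at most $w+1$ distinct values as $k$ varies in $A_i$, exactly one of which (at $k=\sigma_i$) must be mapped to $1$ by $\tau$ and all others to $0$. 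When $\sigma_i$ is interior to $A_i$, the bit string must change on both sides of $\sigma_i$, and since a single coordinate can contribute only one such change, two distinct coordinates $j^-_i(\sigma),j^+_i(\sigma)\in[w]$ are responsible. Color each $\sigma$ by the tuple $((j^-_i(\sigma),j^+_i(\sigma)))_{i\in[w]}$ and apply Product Ramsey with $k=1$ to pass to a sub-grid $A'_1\times\cdots\times A'_w$, of whatever size is needed, on which these pairs are constant, $(j^-_i,j^+_i)$.

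Across the $w$ chains there are $2w$ active-coordinate incidences in only $w$ coordinates, and the two active coordinates within a single chain are distinct, so by pigeonhole some coordinate $j^*$ is active for two distinct chains $i_1\neq i_2$. The boundary position for chain $i_\ell$ in $L_{j^*}$ is on the one hand a monotone function of $a_\sigma$'s position in $L_{j^*}$ (with sign determined by $\epsilon_{i_\ell,j^*}$), and on the other hand coincides, up to an additive shift of at most one, with the $A'_{i_\ell}$-index of $\sigma_{i_\ell}$. Hence in the linear order $L_{j^*}$ induces on $\{a_\sigma\}$, each of $\sigma_{i_1}$ and $\sigma_{i_2}$ must be weakly monotone; choose $\sigma,\sigma'\in A'_1\times\cdots\times A'_w$ that agree outside coordinates $i_1,i_2$ and swap their values there---using the pattern $(1,2)\leftrightarrow(2,1)$ when the two monotonicities have the same sign and the diagonal $(1,1)\leftrightarrow(2,2)$ when they have opposite signs---to obtain a pair for which neither $a_\sigma$ nor $a_{\sigma'}$ can come first in $L_{j^*}$, a contradiction. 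This yields $\bdim(P(n,w))\ge w+1$.

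For the local lower bound, run the analogous argument with a local realizer $\cgL$ (with $\mu(P(n,w),\cgL)\le w$) in place of the Boolean one: each $a_\sigma$ lies in at most $w$ $\ple$'s of $\cgL$, each such $\ple$ orders the chain-$i$ elements it contains in the natural order of $C_i$, so the $k\neq\sigma_i$ whose incomparabilities a given $\ple$ of $a_\sigma$ witnesses form a suffix of the chain-$i$ elements present. Isolating $k=\sigma_i$ once more forces two distinct $\ple$'s of $a_\sigma$ to fire near $\sigma_i$ for each chain $i$; Product Ramsey uniformizes this assignment---with an enriched coloring that additionally records, for each of $a_\sigma$'s $\ple$'s, which chain elements and which occurrence indices are present---the same $2w$-versus-$w$ pigeonhole picks out a $\ple$ $L^*$ active for two chains $i_1,i_2$, and tracking the positions of the $a_\sigma$'s inside $L^*$ produces the same double-monotonicity contradiction. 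The step I expect to be the main obstacle is this local case, because a $\ple$---unlike a linear order in a Boolean realizer---may omit elements entirely, so the ``active'' $\ple$ for chain $i$ at $\sigma$ need not contain $x_{i,\sigma_i\pm 1}$ and the analog of an Erd\H{o}s--Szekeres direction must be tracked per $\ple$ rather than per linear order; this complication is absorbed into the (still finite) Product Ramsey color set, but care is needed to re-derive the two-transition requirement and the monotonicity step in the $\ple$ setting.
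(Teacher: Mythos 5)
The upper bound and the overall architecture (Erd\H{o}s--Szekeres plus Product Ramsey plus a counting argument over coordinates) are in the right spirit, but your lower-bound argument has a genuine gap at its pivot point. You claim that, for fixed $\sigma$ and $i$, exactly one value of $k$ (namely $k=\sigma_i$) gives $\tau\bigl(q(x_{i,k},a_\sigma,\cgB)\bigr)=1$, and you deduce that the query string must change on \emph{both} sides of $\sigma_i$, yielding $2w$ forced transitions to pit against $w$ coordinates. This is false for $P(n,w)$: since $a_\sigma$ covers $x_{i,\sigma_i}$ and $C_i$ is a chain, $a_\sigma$ lies above the entire initial segment $x_{i,1}<\dots<x_{i,\sigma_i}$, so $\tau$ maps the query string to $1$ for \emph{every} $k\le\sigma_i$ and to $0$ for every $k>\sigma_i$. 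Only one transition per chain is forced (between $\sigma_i$ and its successor in $A_i$); nothing forces a change between $\sigma_i-1$ and $\sigma_i$. Your pigeonhole then becomes $w$ active incidences against $w$ coordinates, which is consistent with a bijection $i\mapsto j_i$ and produces no collision. The same defect undermines the local-dimension half, which is built on the same ``two ple's must fire near $\sigma_i$'' count.

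The situation where each chain is tracked by its own coordinate is not a dead end but the actual configuration the paper exploits; you are missing the step that turns it into a contradiction. In the Boolean case, once Ramsey forces each $C_i$ to appear as a block of $L_i$ with every maximal element $a$ sandwiched between $x_{i,\sigma_i}$ and $x_{i,\sigma_i+1}$, the relative order of two maximal elements $a_\sigma,a_{\sigma'}$ in $L_i$ is determined by comparing $\sigma_i$ with $\sigma'_i$; hence \emph{every} bit string of length $w$ arises as $q(a_\sigma,a_{\sigma'},\cgB)$ for some pair of (necessarily incomparable) maximal elements, including one that $\tau$ sends to $1$. In the local case, the paper colors $\mathbf{2}^w$-grids by occurrence indices, passes to a homogeneous $\mathbf{5}^w$-grid, proves the $w$ relevant ple's $L_{m_1},\dots,L_{m_w}$ are pairwise distinct (using cross-chain comparabilities such as $x_{1,4}<a(g_2)$ and $x_{2,2}<a(g_1)$, not a pigeonhole), and then observes that $x_{i,2}<a_{(2,\dots,2)}<x_{i,3}<a_{(3,\dots,3)}<x_{i,4}$ in each $L_{m_i}$ exhausts all $w$ occurrences of both maximal elements with the wrong orientation, so the incomparable pair $(a_{(3,\dots,3)},a_{(2,\dots,2)})$ is never reversed. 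You would need to replace your transition count with an argument of this kind.
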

\begin{proof}
  We give full details of the proof for local dimension, which is
  slightly more complicated.  At the end, we will outline how an
  argument for Boolean dimension can be structured.

  Since $\ldim(P(n,w)\le\dim(P(n,w))\le w+1$, we need only show
  that $\ldim(P(n,w))\ge w+1$, provided $n$ is sufficiently large.
  This assertion holds trivially when $w=1$, so we will fix a
  value $w\ge2$, assume that $\ldim(P(n,w))\le w$ for all $n$ and
  argue to a contradiction.

  We consider a large, but unspecified value of $n$, and we let
  $\cgL=\{L_i:i\in[t]\}$ be a local realizer of $P(n,w)$ with
  $\mu(P,\cgL)\le w$. Clearly, we may assume $\mu(z,\cgL)=w$ for every
  $z\in P(n,w)$.

  Next, we describe a coloring $\varphi$ of the $\bfII^w$ grids in
  $C_1\times C_2\times\dots\times C_w$. For each $i\in[w]$, consider a
  $2$-element subset $S_i=\{j_i,j'_i\}$ of $C_i$ with $j_i<j'_i$. Note
  that $g=S_1\times S_2\times\dots\times S_w$ is a $\bfII^{w}$
  grid. With the grid $g$, we associate the antichain
  $\{x_{i,j'_i}:i\in[w]\}$ and an element $a(g)$ of $\Max(P)$. We set
  $a(g)=a_{\sigma}$, where $\sigma=(j_{1},j_{2},\ldots,j_{w})$.
  Clearly, $a(g)$ is incomparable with each element of the antichain.
  Therefore for each $i\in[w]$, there is a least positive integer
  $m_i\in[t]$ so that $a(g)<x_{i,j'_i}$ in $L_{m_i}$.  Then we set
  $\varphi(g)= ((\alpha_1,\beta_1),(\alpha_2,\beta_2),
  \dots,(\alpha_w,\beta_w))$, where occurrence $\alpha_i$ of $a(g)$
  and occurrence $\beta_i$ of $x_{i,j'_i}$ is in $L_{m_i}$, for each
  $i\in[w]$.

  The number of colors used by $\varphi$ is $w^{2w}$, thus we take
  $n\geq \PRam(5,w,2,w^{2w})$. Theorem~\ref{thm:prod-ramsey} implies
  that there exists a $\bfV^{w}$-grid
  $H_{1}\times H_{2}\times \ldots\times H_{w}$ such that every
  $\bfII^{w}$-grid within it is assigned the same color:
  \[((\alpha_1,\beta_1),(\alpha_2,\beta_2),\dots,
    (\alpha_w,\beta_w)).\] We relabel the elements of $P$ so that
  $H_{i}=\{x_{i,1}, x_{i,2},x_{i,3},x_{i,4},x_{i,5}\}$ with
  $x_{i,1}< x_{i,2}<x_{i,3}<x_{i,4}<x_{i,5}$, for each $i\in[w]$.

  Consider the $\bfII^{w}$-grids of the form
  $g_{a}=S_1\times S_2\times\dots\times S_w$ where
  $S_i=\{x_{i,1},x_{i,a}\}$ with $a\in\{2,3,4,5\}$, for each
  $i\in[w]$. These grids show that there is a sequence
  $(m_1,m_2,\dots,m_w)$ of not necessarily distinct integers so that
  for each $i\in[w]$, occurrence $\beta_i$ of $x_{i,2}$, $x_{i,3}$,
  $x_{i,4}$ and $x_{i,5}$ all occur in $L_{m_i}$.

  Let us show that the elements of the sequence $(m_1,m_2,\dots,m_w)$
  are pairwise distinct. Suppose, for a contradiction, that
  $m_{1}=m_{2}$, noting that this argument can be applied for the case
  where any other two elements of the sequence are equal. Let
  $S_{i}=\{x_{i,4},x_{i,5}\}$ for $i\in[w]$ and consider the
  $\bfII^{w}$-grids
  $g_{1}=\{x_{1,1},x_{1,2}\}\times S_{2}\times S_{3}\times\ldots\times
  S_{w}$, and
  $g_{2}=S_{1}\times\{x_{2,1},x_{2,2}\}\times S_{3}\times\ldots\times
  S_{w}.$ We must have $a(g_{1})<x_{1,4}$ in $L_{m_{1}}$, and
  $a(g_{2})<x_{2,2}$ in $L_{m_{2}}$. As $m_{1}=m_{2}$ this implies
  $a(g_{1}),x_{1,4},a(g_{2}),$ and $x_{2,2}$ appear in $L_{m_{1}}$.
  Since $x_{1,4}<a(g_{2})$ in $P$, it follows that
  $a(g_{1})<x_{1,4}<a(g_{2})<x_{2,2}$ in $L_{m_{1}}$. This is not
  possible as $x_{2,2}<a(g_{1})$ in $P$. Therefore the integers in the
  sequence $(m_1,m_2,\dots,m_w)$ are all distinct.
  
  Now let $\sigma=(2,2,2,\dots,2)$ and $\sigma'=(3,3,3,\dots,3)$. It
  follows that $x_{i,2}<a_\sigma<x_{i,3}<a_{\sigma'}<x_{i,4}$ in
  $L_{m_i}$ for each $i\in[w]$. This accounts for all $w$ of the
  occurrences of $a_\sigma$ and $a_{\sigma'}$. As a consequence, there
  is no $\ple$ $L$ in $\cgL$ with $a_\sigma>a_{\sigma'}$ in $L$. Since
  $a_{\sigma}$ is incomparable to $a_{\sigma'}$ in $P$ this implies
  that $\cgL$ is not a local realizer for $P$. The contradiction
  completes the proof of the theorem for local dimension.

  Here is an outline of the argument for Boolean dimension.  As
  before, suppose that $P=P(n,w)$ has a Boolean realizer $(\cgB,\tau)$
  with $|\cgB|=w$ and argue to a contradiction when $n$ is
  sufficiently large. First, use the Product Ramsey theorem to assume
  that, after relabeling of the chains in $P$ and the linear orders in
  $\cgB$, we have the following two properties:
\begin{itemize}
\item[(1)] for each $(i,j)\in[w]\times[w]$, the elements of $C_i$
  appear as a block in $L_j$, and

\item[(2)] for each $a\in\Max(P)$, and for each $i\in[w]$, if $a$
  covers a point $x\in C_i$ and $x<x'$ in $C_i$, then $a$ is between
  $x$ and $x'$ in $L_i$.
\end{itemize}
Once this structure has been identified, it is easy to see that for
\textit{every} bit-string $\sigma$ of length~$w$, there is some pair
$(a,a')$ of distinct maximal elements such that $q(a,a',\cgB)=\sigma$.
Clearly, this results in a contradiction if we simply choose $\sigma$
such that $\tau(\sigma)=1$.
\end{proof}

The original construction given in~\cite{bib:Trot-1} has an antichain
$A$ with $n$ chains $C_1+C_2+\dots+C_n$ below $A$ and $n$ chains
$D_1+D_2+\dots+D_n$ above $A$.  Now the size of $A$ is $n^{2w}$, where
each element in $A$ covers exactly one element from each $C_{i}$ and
is covered by exactly one element from each $D_{j}$, $i,j\in[n]$.
Using this construction, it is straightforward to modify the argument
given above to show that the inequality in Theorem~\ref{thm:P-A} is
best possible for both local dimension and Boolean dimension.
 
\subsection{Dimension and Size}

The following well known inequality is due to
Hiraguchi~\cite{bib:Hira}.

\begin{theorem}\label{thm:dim-size}
  If $n\ge2$ and $|P|\le 2n+1$, then $\dim(P)\le n$.
\end{theorem}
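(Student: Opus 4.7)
The theorem is Hiraguchi's classical bound, equivalent to $\dim(P) \le \lfloor |P|/2 \rfloor$ for $|P| \ge 4$. My plan is to prove it by induction on $n$, with the inductive step resting on a \emph{removable pair lemma}: for any poset $P$ with $|P| \ge 4$ there exist distinct $x, y \in P$ such that $\dim(P) \le \dim(P - \{x,y\}) + 1$. Granted this lemma, the induction runs cleanly: starting from $|P| \le 2n+1$, removing the pair gives a poset on at most $2n - 1 = 2(n-1) + 1$ elements, which by the inductive hypothesis has dimension at most $n-1$, so $\dim(P) \le n$.

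For the base case $n = 2$, one needs $\dim(P) \le 2$ for every $P$ with $|P| \le 5$. I would invoke the fact — verifiable by direct enumeration of posets on at most five points — that every $3$-irreducible poset has at least six elements, with $S_3$ being the smallest. Any poset of dimension at least three contains a $3$-irreducible subposet, so $|P| \le 5$ forces $\dim(P) \le 2$.

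To prove the removable pair lemma in the inductive step, I would dispose of the antichain case trivially ($\dim \le 2 \le n$) and otherwise take $x$ to be a minimal element of $P$ and $y$ a maximal element with $x < y$; such a $y$ exists because the singleton chain $\{x\}$ extends to a maximal chain whose top is maximal. Given a realizer $L_1, \ldots, L_d$ of $Q = P - \{x,y\}$, I would build $d+1$ linear extensions of $P$ as follows: for $i < d$, set $M_i = x < L_i < y$, which is a linear extension of $P$ because $x$ is minimal and $y$ is maximal; from $L_d$, construct two linear extensions via the block-decomposition trick of Theorem~\ref{thm:dim-1pt} applied to both $x$ and $y$, for instance
\[
M_d = L_d|_{Q - U(x)} < x < L_d|_{U(x) \cap Q} < y
\quad \text{and} \quad
M_{d+1} = x < L_d|_{D(y) \cap Q} < y < L_d|_{Q - D(y)}.
\]

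The main obstacle is verifying that $\{M_1, \ldots, M_{d+1}\}$ is a realizer of $P$. Incomparable pairs involving $x$ or $y$ are handled in both directions: for $u \in Q$ with $u \parallel x$ in $P$ (equivalently $u \in Q - U(x)$), $M_d$ places $x$ after $u$, while $M_1$ places $x$ before $u$, and the symmetric argument works for $y$ using $M_{d+1}$. The delicate point is incomparable pairs $(a, b) \in \Inc(Q)$ whose only $L_i$-reversal is $L_d$ and which lie in different blocks of the $M_d$-decomposition — say $a \in Q - U(x)$ and $b \in U(x) \cap Q$ with $a > b$ in $L_d$. Here one uses that $a \not> x$ and $b > x$ in $P$ to conclude $a \parallel b$ in $P$, and then checks that $M_{d+1}$ reverses this pair in the remaining cases. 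The cases where neither $M_d$ nor $M_{d+1}$ reverses such a pair are handled by reshuffling which $L_i$ plays the role of $L_d$ in the block decomposition (choosing one that reverses maximally many cross-block pairs); this is the subtle combinatorial core of Hiraguchi's lemma and where the real work of the proof lies.
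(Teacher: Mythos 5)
Your inductive step rests on the claim that whenever $x$ is a minimal element and $y$ is a maximal element with $x<y$, the pair $\{x,y\}$ is removable, i.e.\ $\dim(P)\le\dim(P-\{x,y\})+1$. This is false, and the standard examples refute it. In $S_4$ take $x=a_1$ (minimal) and $y=b_2$ (maximal), so that $x<y$. In $S_4-\{a_1,b_2\}$ the element $a_2$ lies below every remaining maximal element and $b_1$ lies above every remaining minimal element, so the only critical pairs left are $(a_3,b_3)$ and $(a_4,b_4)$; one checks directly that $\dim(S_4-\{a_1,b_2\})=2$, while $\dim(S_4)=4$. More generally $\dim(S_n-\{a_1,b_2\})=n-2$ while $\dim(S_n)=n$, so removing such a pair can drop the dimension by two. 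Consequently no family $M_1,\dots,M_{d+1}$ built from a $d$-element realizer of $P-\{x,y\}$ can realize $P$, and the step you defer at the end (``reshuffling which $L_i$ plays the role of $L_d$'') is not a technical detail to be filled in later: it is exactly the point at which the argument provably cannot be completed for this choice of pair. Hiraguchi's genuine removable-pair lemmas impose more restrictive hypotheses on $\{x,y\}$ and require a considerably more delicate verification than the two block decompositions you write down; your base case (every $3$-irreducible poset has at least six points, so $|P|\le 5$ forces $\dim(P)\le2$) is fine, but the inductive engine is broken.

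The paper does not prove the theorem by induction at all. It observes, immediately after Theorem~\ref{thm:|P-A|}, that Hiraguchi's inequality follows from two facts already in hand: Dilworth's remark that $\dim(P)$ is at most the width of $P$, and the bound $\dim(P)\le\max\{2,|P-A|\}$ for an antichain $A$. Let $A$ be a maximum antichain of $P$ with $|P|\le 2n+1$. If $|A|\le n$, the width bound gives $\dim(P)\le n$; if $|A|\ge n+1$, then $|P-A|\le n$ and Theorem~\ref{thm:|P-A|} gives $\dim(P)\le\max\{2,n\}=n$. This dichotomy avoids removable pairs entirely, and replacing your inductive step with it is the most direct repair.
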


The family of standard examples shows that the preceding theorem is
best possible for Dushnik-Miller dimension.  Accordingly, it is of
interest to determine (or at least estimate) the maximum value of the
Boolean dimension and the maximum value of the local dimension of a
poset on $n$ points.

Resolving this question for Boolean dimension is the principal result
in Ne\v{s}et\v{r}il and Pudlak's 1989 paper~\cite{bib:NesPud}.

\begin{theorem}\label{thm:bdim-size}
  The maximum value of the Boolean dimension of a poset on $n$ points
  is $\Theta(\log n)$.
\end{theorem}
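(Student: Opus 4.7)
The plan is to prove the matching bounds $\bdim_{\max}(n) \le O(\log n)$ and $\bdim_{\max}(n) \ge \Omega(\log n)$ by two independent arguments.

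For the upper bound, I would use the probabilistic method. Given an $n$-element poset $P$, fix $d = \lceil C\log_2 n\rceil$ for a sufficiently large absolute constant $C$ and choose linear orders $L_1,\dots,L_d$ on the ground set of $P$ independently and uniformly at random, setting $\cgB=\{L_1,\dots,L_d\}$. The key observation is that if the resulting map $(x,y)\mapsto q(x,y,\cgB)$ is injective on ordered pairs of distinct elements of $P$, then one can define $\tau(\sigma)=1$ exactly when $\sigma = q(x,y,\cgB)$ for some $x<y$ in $P$, producing a valid Boolean realizer of $P$ with $d$ linear orders. Injectivity holds with positive probability by a union bound: for any two distinct ordered pairs $(x,y)\neq(x',y')$, a brief case analysis on $|\{x,y\}\cap\{x',y'\}|$ shows that on a single random $L_i$ the two queries agree with probability at most $2/3$ (the worst case being pairs sharing exactly one element, verified by enumerating the six orderings of three elements). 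Over $d$ independent orders a collision then occurs with probability at most $(2/3)^d$, and taking $C$ large enough makes the union bound over the $O(n^4)$ pairs of ordered pairs strictly less than $1$, so some choice of $\cgB$ works.

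For the lower bound, I would count. A Boolean realizer with $|\cgB|=d$ on the ground set $[n]$ is specified by a $d$-tuple of linear orders (at most $(n!)^d$ choices) together with a function $\tau:\bfII^d\to\{0,1\}$ (at most $2^{2^d}$ choices), and it uniquely determines the underlying poset. Hence the number of labeled $n$-element posets with $\bdim\le d$ is at most $(n!)^d\cdot 2^{2^d}\le 2^{dn\log_2 n + 2^d}$. On the other hand, declaring an arbitrary bipartite relation between the two halves of $[n]$ to be the comparability relation of a height-two poset yields at least $2^{\lfloor n/2\rfloor\lceil n/2\rceil}\ge 2^{(n^2-1)/4}$ distinct labeled posets on $[n]$. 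If every such poset satisfied $\bdim\le c\log_2 n$ for some fixed $c<2$, then for large $n$ the quantity $dn\log_2 n + 2^d = O(n\log^2 n) + n^c$ would be $o(n^2)$, contradicting the lower bound. Hence some $n$-element poset has Boolean dimension $\Omega(\log n)$.

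The main obstacle I expect is the probability computation in the upper bound: one must notice that pairs sharing exactly one element are the worst case (giving $2/3$ agreement probability per order, not $1/2$), and it is this constant strictly less than $1$ that bounds the required number of linear orders. Once that case check is done, the union bound is routine and the counting in the lower bound is standard, so the two directions combine to give $\bdim_{\max}(n)=\Theta(\log n)$.
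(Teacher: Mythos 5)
Your proof is correct, and it is worth noting that the paper itself only proves half of this theorem: it gives the counting lower bound in full and then simply cites Ne\v{s}et\v{r}il and Pudl\'{a}k for the upper bound, remarking that their argument ``is more complex.'' Your lower bound is essentially identical to the paper's (the paper counts the $2^{m^2}$ height-two posets on $2m$ points with prescribed minimal and maximal halves and compares against $(2m!)^d 2^{2^d}$; your version is the same computation with slightly different bookkeeping, and both correctly extract $d=\Omega(\log n)$ from the dominant $2^{2^d}$ term). Your upper bound, by contrast, is a genuinely different and pleasantly self-contained route: you observe that injectivity of the query map $(x,y)\mapsto q(x,y,\cgB)$ on ordered pairs of distinct elements suffices to define $\tau$ (set $\tau(\sigma)=1$ exactly when $\sigma$ is the query of a comparable pair in the right order), and then you get injectivity from $O(\log n)$ independent uniformly random linear orders via a union bound. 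I checked your key probability computation: for two ordered pairs sharing exactly one element the six orderings of the three underlying points give agreement probability exactly $2/3$ in the configurations $(x,y),(x,z)$ and $(x,y),(z,y)$, which is indeed the worst case (disjoint pairs give $1/2$, the ``chained'' configurations give $1/3$, and reversed pairs never collide), so the $(2/3)^d$ bound and the union bound over fewer than $n^4$ pairs of ordered pairs are sound. Note that this argument uses arbitrary linear orders rather than linear extensions, which is permitted under the paper's (Ne\v{s}et\v{r}il--Pudl\'{a}k style) definition of Boolean realizer but would not be available verbatim under the more restrictive Gambosi--Ne\v{s}et\v{r}il--Talamo definition; that restriction is one plausible reason the original argument is more involved. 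What your approach buys is a short, complete, probabilistic proof with an explicit (if unoptimized) constant of roughly $4/\log_2(3/2)$; what the cited approach presumably buys is a construction compatible with the stricter definition.
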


The lower bound for the preceding theorem results from a simple
counting argument.  Consider an integer $n=2m$ and the posets on $2m$
points with $\{a_1,a_2,\dots,a_m\}\subseteq \Min(P)$ and
$\{b_1,b_2,\dots,b_m\}\subseteq\Max(P)$.  Clearly, there are $2^{m^2}$
such posets.  If they all have Boolean dimension at most $d$, then we
must have
\[
  (2m!)^{d}2^{2^d}\ge 2^{m^2}.
\]

This implies that $d=\Omega(\log n)$.  The argument given
in~\cite{bib:NesPud} to show that the maximum Boolean dimension of a
poset on $n$ points is $O(\log n)$ is more complex.

Quite recently, Kim, Martin, Masa\v{r}\'ik, Shull, Smith, Uzzell and
Wang~\cite{bib:KMMSSUW} have settled the analogous question for local
dimension using clever probabilistic methods.  Both upper and lower
bounds of their proof are non-trivial.

\begin{theorem}\label{thm:ldim-size}
  The maximum value of the local dimension of a poset on $n$ points is
  $\Theta(n/\log n)$.
\end{theorem}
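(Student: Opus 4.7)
My plan is to establish the two bounds separately, using quite different techniques. For the lower bound, I would imitate the counting argument sketched above for Theorem~\ref{thm:bdim-size}. Consider the $2^{m^2}$ bipartite posets on $n=2m$ points with prescribed sides $\{a_1,\dots,a_m\}\subseteq\Min(P)$ and $\{b_1,\dots,b_m\}\subseteq\Max(P)$. Any local realizer $\cgL$ on ground set $[n]$ with $\mu(\cgL)\le d$ consists of ples whose total length is at most $nd$, so $\cgL$ can be encoded as a single string over the alphabet $[n]\cup\{\#\}$ (with $\#$ a separator between ples) of length $O(nd)$. This yields at most $2^{O(nd\log n)}$ distinct local realizers on $[n]$. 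Since each of the $2^{m^2}$ posets admits at least one local realizer with $\mu\le d$, we need $nd\log n\ge\Omega(m^2)$, which forces $d=\Omega(n/\log n)$.

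For the upper bound, the plan is a randomized construction. Fix parameters $k$ and $t$. Sample subsets $S_1,\dots,S_t$ of $P$ and for each choose a uniformly random linear extension $L_i$ of $P|_{S_i}$. The goal is to verify simultaneously that (i) every incomparable pair $(x,y)$ in $P$ is reversed in some $L_i$, and (ii) every element lies in at most $O(n/\log n)$ of the $L_i$. The subtlety is that a uniform random subset $S_i$ may induce a dense subposet (think of an induced $S_k$), in which case a random linear extension reverses almost no incomparable pairs. I would therefore first split $P$ into an antichain cover via Mirsky's theorem and sample subsets from the union of at most two antichains at a time; the induced subposets are then bipartite, the reversal probability for each pair is at least a positive constant, and a union bound over the $O(n^2)$ incomparable pairs combined with a Chernoff bound on per-element occurrence completes the argument.

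The main obstacle is the upper bound, specifically the parameter tuning. Balancing the covering condition (which requires roughly $tk^2/n^2 \gg \log n$) against the occurrence bound $tk/n \le O(n/\log n)$ is tight and leaves little slack, so $k$ has to be pushed up toward the critical scale using the antichain structure, and tall-thin posets may need a separate treatment. Once the right scale is identified, concentration is routine. The lower-bound argument, by contrast, is essentially routine counting, provided the encoding is set up carefully enough that the $\log n$ factor lands in the denominator of the final bound rather than being absorbed into lower-order terms.
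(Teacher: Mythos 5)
First, a point of reference: the paper does not prove Theorem~\ref{thm:ldim-size}; it is quoted from Kim, Martin, Masa\v{r}\'ik, Shull, Smith, Uzzell and Wang~\cite{bib:KMMSSUW}, so your proposal is being judged on its own merits rather than against an in-text argument. Your lower bound is correct and complete in outline. A family $\cgL$ of ple's determines the poset it realizes (for distinct $u,v$ one has $u<v$ in $P$ exactly when some ple places $u$ below $v$ and no ple places $u$ above $v$), the total length of the ple's is $\sum_u \mu(u,\cgL)\le nd$, and your encoding bound of $2^{O(nd\log n)}$ families against $2^{m^2}$ posets forces $d=\Omega(n/\log n)$. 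This is the same style of counting the paper uses for Theorem~\ref{thm:bdim-size}, correctly adapted to the local setting.

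The upper bound, however, has a genuine gap, located exactly at the step you flag and then claim to repair. Passing to the union of two antichains of a Mirsky decomposition does not make the per-pair reversal probability a positive constant: the standard example $S_k$ is itself the union of two antichains, and in any linear extension of $S_k$ at most one of the $k$ critical pairs can be reversed (if $b_i<a_i$ and $b_j<a_j$ with $i\ne j$, then $b_i<a_i<b_j<a_j<b_i$). Hence a uniformly random linear extension of an induced copy of $S_k$ reverses a given critical pair with probability at most $1/k$, which vanishes. Worse, the posets that actually drive the lower bound -- height-$2$ posets whose incomparability graph between the two antichains is a random bipartite graph -- are unions of two antichains for which a single linear order can reverse only a $2K_2$-free (``difference graph'') set of incomparable pairs; a random bipartite graph on $k+k$ vertices has $\Theta(k^2)$ incomparable pairs but no difference subgraph with more than $O(k\log k)$ edges, so the average reversal probability is $O(\log k/k)$. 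Plugging this into your own parameter balance gives $tk\gtrsim n^2\log n/\log k$, which contradicts the occurrence budget $tk\le n^2/\log n$; the scheme therefore fails quantitatively on precisely the extremal instances. The actual proof in~\cite{bib:KMMSSUW} does not sample uniform random linear extensions at all: it exploits the difference-graph characterization of the pairs reversible by one linear order and constructs an explicit local cover of all pairs of $[n]$ by difference graphs with each vertex in $O(n/\log n)$ of them, the ple's then being built as block orders. So the lower bound of your proposal stands, but the upper bound needs a different mechanism for producing the ple's.
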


\subsection{Dimension and the Complement of Antichains}

The following inequality was proved independently by
Trotter~\cite{bib:Trot-2} and Kimble~\cite{bib:Kimb}.

\begin{theorem}\label{thm:|P-A|}
  Let $A$ be an antichain in a poset $P$ and let $n=|P-A|$.  Then
  $\dim(P)\le \max\{2,n\}$.
\end{theorem}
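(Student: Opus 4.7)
The plan is to induct on $n=|P-A|$, using Theorem~\ref{thm:dim-1pt} for the inductive step. For $n=0$, the poset $P$ is itself an antichain, so $\dim(P)\le 2$. For $n\ge 3$, I would choose any $x\in P-A$ and note that $A$ remains an antichain in $P'=P-\{x\}$ with $|P'-A|=n-1\ge 2$; the inductive hypothesis gives $\dim(P')\le n-1$, and Theorem~\ref{thm:dim-1pt} then yields $\dim(P)\le 1+\dim(P')\le n$. This reduces the problem to the two base cases $n=1$ and $n=2$, in each of which the goal is to show $\dim(P)\le 2$.

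For $n=1$, write $P-A=\{x\}$. The antichain condition on $A$ forbids $x$ from having both a strict upper and a strict lower neighbor in $A$, since such neighbors would be comparable via $x$; by duality I may assume $x$ has no strict lower neighbor in $A$. Partition $A=A_1\cup A_2$ with $A_1=\{a\in A:x<a\}$ and $A_2=\{a\in A:x\parallel a\}$, and fix a linear order on $A$. Let $L_1$ be $A_2$ followed by $x$ followed by $A_1$, each block in the fixed order; let $L_2$ be $x$ followed by $A_1$ in reverse followed by $A_2$ in reverse. A routine check shows that $\{L_1,L_2\}$ reverses every incomparable pair of $P$ (those within $A_1$, within $A_2$, across $A_1$ and $A_2$, and between $x$ and $A_2$).

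The main work is the $n=2$ case. Write $P-A=\{x_1,x_2\}$ and, by duality, consider the two situations $x_1<x_2$ and $x_1\parallel x_2$ separately. Classify each $a\in A$ by its pair of relations to $x_1$ and $x_2$; the antichain condition on $A$ rules out several combinations. For instance, when $x_1<x_2$, we cannot simultaneously have $a_1\in A$ with $a_1<x_1$ and $a_2\in A$ with $a_2>x_2$, since transitivity would force $a_1<a_2$ in $A$. With this reduced collection of admissible types, I construct two linear extensions $L_1,L_2$ by assigning each type to a block on the ``spine'' $x_1,x_2$ and reversing within-block orders between $L_1$ and $L_2$. The subtle point, and the main obstacle, concerns elements of $A$ incomparable to both $x_1$ and $x_2$ when $x_1<x_2$: a naive ``reverse everything'' strategy fails here, because the order $x_1<x_2$ must be preserved in both extensions. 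The fix is to partition this doubly-incomparable type into two subgroups, placing one above $x_2$ and the other below $x_1$ in $L_1$, then swapping their roles in $L_2$; a simultaneous careful choice of block assignments and within-block orders then handles all remaining incomparabilities, completing the base case and hence the induction.
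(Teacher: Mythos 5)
The paper does not actually prove this theorem; it quotes it from Trotter and Kimble, whose arguments build an $n$-element realizer directly rather than by induction. Your reduction is nevertheless legitimate: Theorem~\ref{thm:dim-1pt} does bring everything down to $n\le 2$, and your treatment of $n=0$ and $n=1$ is correct and complete (in particular, the observation that $A$ cannot meet both the strict up-set and the strict down-set of $x$ is exactly the right structural fact, and the two extensions you give for $n=1$ check out).

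The gap is that under this reduction the entire content of the theorem is concentrated in the claim that $\dim(P)\le 2$ when $|P-A|=2$, and that claim is asserted rather than proved. You never exhibit the two linear extensions for any subcase, and there are several of them: for $x_1<x_2$ the antichain condition forces $A$ to be (up to duality) either $D\cup L\cup I$ or $M\cup L\cup R\cup I$, where $D=\{a:a<x_1\}$, $M=\{a:x_1<a<x_2\}$, $L=\{a:a\parallel x_1,\ a<x_2\}$, $R=\{a:a>x_1,\ a\parallel x_2\}$, $I=\{a:a\parallel x_1,\ a\parallel x_2\}$; and $x_1\parallel x_2$ has its own subcases according to which of the sets $A\cap D(x_i)$, $A\cap U(x_i)$ are empty. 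Each needs its own block pattern and its own check that the pattern can be globally inverted while both orders remain linear extensions. Moreover, the one concrete device you do describe — splitting $I$ into two groups straddling the spine — is not needed and suggests the construction was not actually carried out: placing all of $I$ below $x_1$ in $L_1$ and above $x_2$ in $L_2$ already works. For example, in the subcase $A=M\cup L\cup R\cup I$ the pair $L_1=I<x_1<R<M<L<x_2$ and $L_2=L<x_1<M<x_2<R<I$, with each block's internal order reversed between the two, realizes $P$; the other subcases admit similar but different patterns (e.g., for $A=D\cup L\cup I$ one must put $L$ below $D$ in one extension and above $x_1$ in the other, since $l<d$ and $x_1<l$ cannot coexist). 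Until these verifications are written out, the base case — and hence the whole proof — is incomplete.
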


The standard examples again show that the inequality in
Theorem~\ref{thm:|P-A|} is best possible.  Moreover, this inequality
coupled with the fact that $\dim(P)$ is at most the width of $P$
yields a simple proof of Hiraguchi's inequality.

For local dimension we have the following analogue, a result where
Theorem~\ref{thm:ldim-size} plays an important role.

\begin{theorem}\label{thm:ldim-|P-A|}
  The maximum value of the local dimension of a poset $P$ consisting
  of an antichain $A$ and $n$ other points is $\Theta(n/\log n)$.
\end{theorem}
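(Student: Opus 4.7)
For the lower bound, I will invoke Theorem~\ref{thm:ldim-size} directly: it supplies, for every $n$, a poset $Q$ on $n$ points with $\ldim(Q) = \Omega(n/\log n)$. Taking $P := Q$ with the empty antichain $A = \emptyset$ already realizes a poset of the claimed form with $|P - A| = n$ and $\ldim(P) = \Omega(n/\log n)$. If one insists that $A$ be non-empty, adjoining a single element incomparable to everything in $Q$ and letting $A$ be that singleton keeps $|P - A| = n$ and does not decrease local dimension, since $Q$ stays a subposet of $P$.

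For the upper bound, the goal is to show that $\ldim(P) = O(n/\log n)$ for every poset $P$ of the prescribed form, independently of how large $|A|$ is. My plan is to assemble a local realizer of $P$ in three pieces. First, apply Theorem~\ref{thm:ldim-size} to the subposet $P - A$ on $n$ points, obtaining a local realizer $\cgL_0$ of $P - A$ with $\mu(\cgL_0) \le c_1 n/\log n$; this takes care of every pair internal to $P - A$. Second, append two ples that list all of $A$ in opposite linear orders; these reverse every intra-$A$ incomparability, contribute $+2$ to the appearance count of each $a \in A$, and touch no element of $P - A$. Third, append further ples to reverse the remaining incomparabilities---those between $A$ and $P - A$---and to witness the associated comparable pairs.

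The main obstacle lies in the third phase. Because $|A|$ may be arbitrarily large, any naive scheme---say one ple per element of $A$, or one per profile $(D(a), U(a))$ with $D(a),U(a)\subseteq P-A$ (noting that the number of such profiles can be as large as $4^n$)---would force elements of $P - A$ into far more than $O(n/\log n)$ ples. To circumvent this I would batch elements of $A$ by profile, exploiting the fact that two elements of $A$ sharing a profile interact identically with $P - A$ and can be reversed against it in unison, and then adapt the probabilistic construction behind the upper bound in Theorem~\ref{thm:ldim-size} to the bipartite-style interaction between $A$ and $P - A$: sample random ples whose ground sets mix batches from $A$ with carefully chosen subsets of $P - A$, and run a union-bound argument to check that with positive probability every required reversal occurs while each $x \in P - A$ appears in only $O(n/\log n)$ of the sampled ples. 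Summing the three phases then gives $\ldim(P) = O(n/\log n)$.
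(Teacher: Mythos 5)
Your lower bound is fine (though the paper gets it more naturally by observing that the extremal height-$2$ posets behind Theorem~\ref{thm:ldim-size} already have the form ``antichain plus $n$ other points,'' with $A$ the set of minimal elements), and your decomposition of the upper bound into three phases matches the paper's plan. But the third phase is exactly where the entire difficulty of the theorem lives, and you have not actually constructed it: ``batch by profile, then adapt the probabilistic construction and run a union bound'' is a research plan, not an argument. As you yourself note, the number of profiles can be exponential in $n$, so batching by profile alone leaves elements of $P-A$ appearing in exponentially many ples; and it is not at all clear that a random sampling scheme can simultaneously (i) hit every incomparable pair $(x,a)$ with $x>a$, (ii) keep each $x\in P-A$ in only $O(n/\log n)$ ples, and (iii) ensure each sampled order is actually a partial linear \emph{extension} of $P$, i.e.\ consistent with the order on $P$. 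None of these is verified, so the proof has a genuine gap at its central step.

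The paper's resolution is deterministic and quite different from what you sketch. Working with $A$ a maximal antichain and $X$ the points below it, one partitions $X$ into $s$ blocks $X_1,\dots,X_s$ of size $m=n/s$. For each block $X_i$, the family of upsets of $X_i$, ordered by inclusion, is partitioned into at most $\binom{m}{\lceil m/2\rceil}$ chains (it sits inside a Boolean lattice), and one ple is built per chain: the upsets along a chain $S_1\subsetneq\cdots\subsetneq S_r$ are laid out via their successive differences, and an element $a\in A$ is inserted into the ple for the unique chain containing the upset $\{x\in X_i: x\parallel a\}$, immediately below that upset. This reverses every pair $x>a$ with $x\in X_i$, $x\parallel a$. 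The counting is then: each $x\in X$ lies in one block, hence in at most $\binom{m}{\lceil m/2\rceil}$ ples of this family, while each $a\in A$ enters at most one ple per block, hence at most $s$. Choosing $s=\binom{m}{\lceil m/2\rceil}$ forces $m=\Theta(\log n)$ and $s=\Theta(n/\log n)$, balancing both counts at $O(n/\log n)$. The key structural fact making the ples legitimate --- that the set of elements of $X_i$ incomparable to a fixed $a$ is an upset of $X_i$ --- is precisely the kind of consistency issue your probabilistic sketch would also have to confront, and is the reason the chain decomposition of the upset lattice, rather than profile-batching or random sampling, is the right tool here.
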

\begin{proof}
  The argument for the lower bound in Theorem~\ref{thm:ldim-size}
  results from considering height~$2$ posets with $n$ minimal elements
  and $n$ maximal elements and showing that among them, there is (at
  least) one whose local dimension is $\Omega(n/\log n)$.
  Accordingly, the same lower bound applies in this theorem as well.

  The upper bound is a bit more complicated \footnote{This part of the
    proof is a result of conversations in 2016 with S.~Felsner,
    P.~Micek and V.~Wiechert.}, and we find it convenient to prove a
  slightly stronger result, i.e., we show that the local dimension of
  a poset $P$ is $O(n/\log n)$ when the ground set of $P$ can be
  partitioned as $A\cup X\cup Y$ where
  \begin{itemize}
  \item[(1)] $A$ is a maximal antichain in $P$;

  \item[(2)] each point of $X$ is less than some point in $A$;

  \item[(3)] each point of $Y$ is greater than some point in $A$; and
  
  \item[(4)]$|X|=|Y|=n$.
  \end{itemize}
  We now build a local realizer
  $\cgL=\cgL_1\cup\cgL_2\cup\cgL_3\cup \cgL_4$ of $P$.  We start by
  setting $\cgL_1=\{L_1,L_2\}$ where $X<A<Y$ in $L_1$, $X<A<Y$ in
  $L_2$ and the restriction of $L_1$ to $A$ is the dual of the
  restriction of $L_2$ to $A$.  Using Theorem~\ref{thm:ldim-size}, we
  take the family $\cgL_2$ to be a local realizer of the subposet $Q$
  determined by $X\cup Y$ with $\mu(u,\cgL_2)$ being $O(n/\log n)$ for
  each point $u\in Q$.

  Next, we construct a family $\cgL_3$ of $\ple$'s of $X\cup A$ so
  that
  \begin{itemize}
  \item[(1)] for each incomparable pair $(x,a)$ with $x\in X$ and
    $a\in A$, there is some $L\in\cgL_3$ with $x>a$ in $L$; and
  \item[(2)]$\mu(u,\cgL_3)$ is $O(n/\log n)$ for each $u\in X\cup A$.
  \end{itemize}
  We begin by taking an arbitrary partition of $X$ as
  $X=X_1\cup X_2\cup\dots\cup X_s$ where each subposet $X_i$ has size
  $m=n/s$. As usual in arguments of this type, we are assuming $s$ and
  $m$ are integers.  For each $i\in[s]$, we let $\cgU_i$ denote the
  family of all upsets of $X_i$.  Considering $\cgU_i$ as partially
  ordered by inclusion, it is clear that $\cgU_i$ can be partitioned
  into at most $\binom{m}{\lceil m/2\rceil}$ chains, as $\cgU_{i}$ is
  a subposet of the Boolean lattice (or subset lattice).

  Now let
  $S_1\subsetneq S_2\subsetneq S_{3}\subsetneq\dots\subsetneq S_r$ be
  any chain in this partition of $\cgU_i$.  We build a $\ple$ $L$
  using the following recursion.  Set $D_1=S_1$ and let
  $D_i=S_i-S_{i-1}$ for $2\le i\le r$.  An element $x\in X$ will be in
  $L$ if and only if $x\in S_r$.  Second, we have $x>y$ in $L$ if
  there are integers $i$ and $j$ with $1\le i<j\le r$ so that
  $x\in D_i$ and $y\in D_j$. The order $L$ assigns to a pair
  $x,y\in S_r$ when there is some $i$ for which $x,y\in D_i$ is
  arbitrary.

  To complete the definition of $L$ we add those elements $a\in A$
  such that there is some $i$ for which $a$ is incomparable with all
  elements of $S_i$ and comparable with all elements of $X_i-S_i$.  Of
  course, we place $a$ immediately under the lowest element of $S_i$
  in $L$.

  Now we count frequencies.  Each element of $X$ is in a unique
  subposet $X_i$.  So, being generous
  $\mu(x,\cgL_3) \le\binom{m}{\lceil m/2\rceil}$.  On the other hand,
  for each $i\in [s]$, an element $a\in A$ appears in at most one
  $\ple$ associated with chains in the partition of $\cgU_i$.  It
  follows that $\mu(a,\cgL_3)\le s$.

  So to optimize the construction, we choose $s$ so that
  $s=\binom{m}{m/2}$.  This yields that $\mu(u,\cgL_3)$ is
  $O(n/\log n)$ for every $u\in X\cup A$.

  To complete the proof, the preceding construction is then repeated
  in a symmetric manner to obtain a family $\cgL_4$ for $Y\cup A$.
\end{proof}

For Boolean dimension, we have been able to show that there is a
constant $C$ such that $\bdim(P)\le \lceil 2n/3\rceil+C$ when $P$
contains an antichain $A$ and $n$ other points.  We do not include the
details as we feel the result is most likely far from best possible.

\subsection{Dimension and the Product of Chains}

For positive integers $k$ and $d$, let $\mathbf{k}^{d}$ denote the
Cartesian product of $d$ copies of a $k$-element chain. As is well
known, for all $k\ge2$, $\dim(\mathbf{k}^d)=d$.  It is an easy
application of the Product Ramsey Theorem to show that for each
$d\ge1$, there is an integer $k_d$ so that if $k\ge k_d$, then
$\bdim(\mathbf{k}^d)= \ldim(\mathbf{k}^d)=d$.  However, we are
completely unable to settle whether or not $k_d=2$ when $d\ge2$.  An
easy counting argument shows that
$\bdim(\mathbf{2}^d)=\Omega(d/\log d)$, but it might be the case that
$\bdim(\mathbf{2}^d)=d$.  We know even less about the situation with
local dimension.

\subsection{Components and Blocks}

We assume that the reader is familiar with basic concepts of graph
theory, including the following terms: connected and disconnected
graphs, components, cut vertices and $k$-connected graphs for an
integer $k\ge2$. Recall that when $G$ is a graph, a connected induced
subgraph $H$ of $G$ is called a \emph{block} of $G$ when $H$ is a
maximal subgraph with no cut
vertex.

Here are the analogous concepts for posets.  A poset $P$ is said to be
\emph{connected} if its cover graph is connected.  A subposet $B$ of
$P$ is said to be \emph{convex} if $y\in B$ whenever $x,z\in B$ and
$x<y<z$ in $P$.  Note that when $B$ is a convex subposet of $P$, the
cover graph of $B$ is an induced subgraph of the cover graph of $P$.
A convex subposet $B$ of $P$ is called a \emph{component} of $P$ when
the cover graph of $B$ is a component of the cover graph of $P$.  A
convex subposet $B$ of $P$ is called a \emph{block} of $P$, when the
cover graph of $B$ is a block in the cover graph of $P$.

As is well known, when $P$ is a disconnected poset with components
$C_1,C_2,\dots,C_t$, for some $t\ge2$,
$\dim(P)=\max(\{2\}\cup\{\dim(C_i):1\le i\le t\})$.  Readers may note
that the preceding observation is just a special case of the formula
for the dimension of a \emph{lexicographic sum} (see page~23
in~\cite{bib:Trot-Book}).  For local dimension, it is an easy exercise
to show that $\ldim(P)\le 2+\max\{\ldim(C_i):1\le i\le t\}$, but we do
not know whether this inequality is best possible.

The corresponding result for Boolean dimension is more complicated and
is due to M\'{e}sz\'{a}ros, Micek and Trotter~\cite{bib:MeMiTr}.

\begin{theorem}\label{thm:bdim-components}
  Let $P$ be a disconnected poset with components $C_1,C_2,\dots,C_t$,
  for some $t\ge2$.  If $d=\max\{\bdim(C_i): 1\le i\le t\}$, then
  $\bdim(P)\le2 + d+ 4\cdot 2^d$.
\end{theorem}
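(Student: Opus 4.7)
The plan is to construct a Boolean realizer $(\cgB,\tau)$ of $P$ of size $2+d+4\cdot 2^d$ by stitching a Boolean realizer of each component together with a bounded number of auxiliary linear orders which let the evaluator first detect whether a pair $(x,y)$ lies in a common component, and then, if it does, decode the comparability answer from that component's own realizer. As setup, fix for each component $C_i$ a Boolean realizer $(\cgB_i,\tau_i)$ with $|\cgB_i|=d$, padding with duplicate orders if necessary; call $\tau_i\colon\mathbf{2}^d\to\{0,1\}$ the \emph{type} of $C_i$, noting that there are at most $2^{2^d}$ possible types. Also fix, for each $i$, an arbitrary ``canonical'' linear order $K_i$ on the ground set of $C_i$, to be used as a reference inside the auxiliary orders.

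The realizer $\cgB$ will be assembled in three blocks. In block~(a), for each $j\in[d]$ I will take $M_j$ to be any linear order on the ground set of $P$ whose restriction to every $C_i$ is the $j$-th order of $\cgB_i$; the inter-component interleaving is immaterial. For a same-component pair $(x,y)\in C_i$, the bits of $q(x,y,\cgB)$ contributed by block~(a) form exactly $\sigma=q(x,y,\cgB_i)$, whose comparability answer is $\tau_i(\sigma)$. Block~(b) will contain two orders $N^+$ and $N^-$ designed as a same-vs-cross-component detector: enumerate the components $C_1,\dots,C_t$ and list the elements in that order, with each $C_i$ written internally via $K_i$ in $N^+$ and via $K_i^*$ in $N^-$. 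A direct check confirms that the two block~(b) bits are opposite when $x$ and $y$ share a component, and equal when they do not.

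Block~(c), the heart of the construction, will contain $4\cdot 2^d$ orders indexed by pairs $(\sigma,(p,q))\in\mathbf{2}^d\times\{+,-\}^2$. For each $\sigma$, partition the components as $\Gamma^+_\sigma=\{C_i:\tau_i(\sigma)=1\}$ and $\Gamma^-_\sigma=\{C_i:\tau_i(\sigma)=0\}$, and define $N_\sigma^{pq}$ to place all elements of $\Gamma^+_\sigma$ before all elements of $\Gamma^-_\sigma$; inside $\Gamma^+_\sigma$ each component interior is written via $K_i$ when $p=+$ and via $K_i^*$ when $p=-$, and inside $\Gamma^-_\sigma$ each component interior uses $K_i$ when $q=+$ and $K_i^*$ when $q=-$. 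A short case analysis will then show that for a same-component pair $(x,y)\in C_i$ the four $\sigma$-block bits form the signature $(b,b,\bar b,\bar b)$ when $\tau_i(\sigma)=1$ and $(b,\bar b,b,\bar b)$ when $\tau_i(\sigma)=0$, while for any cross-component pair the four $\sigma$-block bits are constant. The evaluator $\tau$ is then defined on a bit string as follows: if the two block~(b) bits agree, output $0$; otherwise read $\sigma$ from block~(a), inspect the four bits of the $\sigma$-block of block~(c), decode $\tau_i(\sigma)$ from the signature, and output that bit. The total count of orders is $2+d+4\cdot 2^d$, matching the claim.

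The main obstacle will be the correctness verification of block~(c): one must check that the three possible signatures --- ``same-component, type~$1$'', ``same-component, type~$0$'', and ``cross-component'' --- are genuinely pairwise distinguishable at the level of $\tau$'s decision procedure, for every $\sigma$ and every choice of the (arbitrary) inter-component orderings inside $\Gamma^+_\sigma$ and $\Gamma^-_\sigma$. The driving observation is that reversing a single component's interior order flips the relative order of every pair inside that component and leaves all cross-component relative orders untouched; hence each of the four bits in a $\sigma$-block is either the pair's ``canonical'' bit or its negation, with the pattern of negations determined precisely by whether $C_i\in\Gamma^+_\sigma$ or $C_i\in\Gamma^-_\sigma$, which in turn recovers $\tau_i(\sigma)$ and closes the argument.
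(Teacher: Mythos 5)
The paper does not actually prove Theorem~\ref{thm:bdim-components}; it is quoted from M\'{e}sz\'{a}ros, Micek and Trotter~\cite{bib:MeMiTr}, so there is no in-paper argument to compare against. Your construction, however, is correct and complete as a self-contained proof. The three blocks do exactly what they must: block~(b) is a perfect same-versus-cross-component classifier (same-component pairs get opposite bits from $K_i$ versus $K_i^*$, cross-component pairs get equal bits from the shared block order), so the evaluator can safely output $0$ on every cross-component pair without consulting the other blocks; block~(a) recovers $\sigma=q(x,y,\cgB_i)$ for same-component pairs; and in block~(c) the partition of components by the value $\tau_i(\sigma)$ makes the answer $\tau_i(\sigma)$ readable off the bit pattern alone, since the bits from $N_\sigma^{++}$ and $N_\sigma^{+-}$ agree exactly when $C_i\in\Gamma^+_\sigma$. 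Crucially, the decoding never needs to identify $i$, only the signature, so $\tau$ really is a function of the bit string, and the count $d+2+4\cdot 2^d$ matches the claim. Two minor remarks: the cross-component constancy of the block-(c) bits, which you flag as the main verification burden, is actually irrelevant, because $\tau$ never inspects block~(c) once block~(b) reports a cross-component pair; and your scheme only ever consults the two orders $N_\sigma^{++}$ and $N_\sigma^{+-}$, so it in fact yields the slightly stronger bound $\bdim(P)\le 2+d+2\cdot 2^d$. Neither point affects correctness.
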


The inequality in Theorem~\ref{thm:bdim-components} cannot be improved
dramatically, since it is shown in~\cite{bib:MeMiTr} that for large
$d$, there is a disconnected poset $P$ with $\bdim(P)=\Omega(2^d/d)$
and $\bdim(C)\le d$ for every component $C$ of $P$.

The situation with blocks is more complex, even for Dushnik-Miller
dimension.  In~\cite{bib:TrWaWa}, Trotter, Walczak and Wang prove the
following result for Dushnik-Miller dimension.

\begin{theorem}\label{thm:dim-blocks}
  If $d\ge1$ and $\dim(B)\le d$ for every block of a poset $P$, then
  $\dim(P)\le d+2$.  Furthermore, this inequality is best possible.
\end{theorem}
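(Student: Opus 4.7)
The plan is to build a realizer of size $d+2$ by combining block realizers with two ``traversal'' extensions, and then to establish tightness by an explicit family. For the upper bound, root the block--cut tree of the cover graph at an arbitrary block $B_0$. For each non-root block $B$, its unique cut vertex $c_B$ shared with its parent will serve as an ``attachment point.'' Fix a realizer $\{L_1^B,\dots,L_d^B\}$ of each block $B$ (each has size at most $d$). Process blocks in BFS order from the root: to construct the $i$-th global extension $M_i$, start with $L_i^{B_0}$, and when visiting a non-root block $B$, splice $L_i^B \setminus \{c_B\}$ into the current $M_i$ at the position of $c_B$, inserting elements below $c_B$ in $L_i^B$ immediately before $c_B$, and elements above $c_B$ immediately after. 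Because $c_B$ is a cut vertex and $B$ is convex in $P$, every comparability between $B \setminus \{c_B\}$ and the rest of $P$ factors through $c_B$, so each splice preserves the property of being a linear extension. The resulting $M_1,\dots,M_d$ reverse every incomparability whose two points lie inside a common block.

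It remains to reverse incomparable pairs $(x,y) \in \Inc(P)$ with $x$ and $y$ in distinct blocks. I would build two extensions $M_{d+1}$ and $M_{d+2}$ from two DFS traversals of the block tree, after fixing an ordering of the children at each block: one traversal ``left-to-right,'' the other ``right-to-left.'' Within each block one also fixes a linear extension used to write out the block's elements when the traversal visits it. For any incomparable pair $(x,y)$ in distinct blocks, the least common ancestor block $B^{\ast}$ in the block tree separates $x$ and $y$ into two different subtree-branches (possibly meeting $B^{\ast}$ at distinct cut vertices), and the two opposite traversals disagree on which of $x,y$ comes first. Together with $M_1,\dots,M_d$ this gives a realizer of size $d+2$.

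For tightness, I would construct, for each $d\ge 1$, a poset whose every block has dimension at most $d$ but whose dimension equals $d+2$. A natural candidate is to glue together two copies of an irreducible subposet of dimension roughly $d+1$ (the standard example $S_{d+1}$, whose cover graph is $2$-connected once $d+1 \ge 2$ and therefore is a single block of dimension $d+1$) at a shared cut vertex, together with a small attachment of one or two extra elements that forces the additional $+1$. Verifying that the result really has dimension $d+2$ will rely on producing, from any putative realizer of size $d+1$, a long alternating cycle of incomparable pairs that cannot all be reversed simultaneously.

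The main obstacle is the lower bound: exhibiting a poset that simultaneously keeps every block's dimension at most $d$ and forces the full dimension up to $d+2$ is genuinely delicate, because the ``$+2$'' has to arise from the interaction of block realizers with a single cut vertex and not from any individual block. A secondary obstacle in the upper bound is the verification that the DFS extensions $M_{d+1},M_{d+2}$ really reverse every cross-block incomparability in every configuration; the tricky case is when one of $x,y$ is itself a cut vertex on the tree-path between their blocks, and careful bookkeeping of where cut vertices land in each traversal is required.
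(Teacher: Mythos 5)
This theorem is not proved in the paper at all: it is quoted from Trotter, Walczak and Wang~\cite{bib:TrWaWa}, and the text explicitly warns that ``neither the proof of the inequality\dots nor the proof that the inequality is best possible is elementary.'' Your proposed elementary argument has a genuine gap at exactly the point where the real difficulty lies. The splicing step producing $M_1,\dots,M_d$ is fine (comparabilities between $B\setminus\{c_B\}$ and the rest do factor through $c_B$, and the restriction of $M_i$ to each block is $L_i^B$), but the claim that two traversal-based extensions $M_{d+1},M_{d+2}$ reverse \emph{every} incomparable pair whose endpoints lie in distinct blocks cannot be correct. Test it on a poset whose cover graph is a tree: every block is a single cover edge, so $d=1$ and every incomparable pair is a cross-block pair. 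If two linear extensions reversed all such pairs, every poset with a tree cover graph would have dimension at most~$2$; but, as the paper itself records ($d(1,h)=3$, Trotter--Moore~\cite{bib:TroMoo}), there are height-$2$ posets with tree cover graphs of dimension~$3$. Concretely, either your DFS orders fail to be linear extensions (writing out a parent block in full before descending puts elements of the child block that lie \emph{below} the shared cut vertex after elements of the parent that lie above it), or, once repaired to be linear extensions, they no longer disagree on every cross-block pair. The genuinely hard content of the upper bound is showing that the cross-block pairs \emph{not} already reversed incidentally by suitably chosen $M_1,\dots,M_d$ can be handled by two more extensions; in~\cite{bib:TrWaWa} this requires a Ramsey-theoretic argument, not a traversal trick.

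The tightness half is also not salvageable as sketched: gluing two copies of $S_{d+1}$ at a cut vertex produces blocks of dimension $d+1$, which violates the hypothesis that every block has dimension at most $d$. Any valid extremal example must keep each block's dimension at $d$ while forcing the global dimension to $d+2$ purely through the interaction across cut vertices, and exhibiting such a poset (together with an alternating-cycle argument defeating every $(d+1)$-realizer) is the second non-elementary component of~\cite{bib:TrWaWa}.
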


Neither the proof of the inequality in Theorem~\ref{thm:dim-blocks},
nor the proof that the inequality is best possible is elementary.
Surprisingly, however, there is no analogous result for local
dimension, as Bosek, Grytczuk and Trotter~\cite{bib:BoGrTr} prove that
for every $d\ge4$, there is a poset $P$ with $\ldim(P)\ge d$, such
that $\ldim(B)\le 3$ whenever $B$ is a block in $P$.

However, on the issue of blocks, Boolean dimension behaves like
Dushnik-Miller dimension, as the following inequality is proved
in~\cite{bib:MeMiTr}.

\begin{theorem}\label{thm:bdim-blocks}
  If $d\ge1$ and $\bdim(B)\le d$ for every block $B$ of a poset $P$,
  then $\bdim(P)\le 9+d+18\cdot 2^d$.
\end{theorem}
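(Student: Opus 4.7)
The plan is to generalize the approach of Theorem~\ref{thm:bdim-components} (the components case) to the block-cut decomposition. The key structural input is that the blocks of the cover graph of $P$ form a tree structure glued only at cut vertices, so cross-block comparisons in $P$ are forced to route through these cut vertices, and consequently can be reduced to within-block comparisons handled by the given block realizers.

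First, root the block-cut tree at an arbitrary block $B_0$, and assign to each element $x \in P$ a \emph{primary block} $B(x)$ (for a cut vertex $x$, the block containing $x$ that is nearest $B_0$ in the block-cut tree). The key observation is that for $x,y$ in different primary blocks the unique block-cut path $B(x),\,v_0,\,B_1,\,v_1,\,B_2,\dots,v_{m-1},\,B(y)$ has the property that $x<y$ in $P$ if and only if $x \le v_0 \le v_1 \le \dots \le v_{m-1} \le y$ in $P$; this is because any monotone cover-graph path from $x$ up to $y$ is forced to traverse these cut vertices in order. Thus every cross-block comparison reduces to a chain of within-block comparisons, each of which can be decided by the Boolean realizer of the relevant block.

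Next, assemble a Boolean realizer of $P$ in three layers. Layer~(i) consists of $d$ linear orders on $P$: for each $i \in [d]$, concatenate the $i$-th linear order of each block's Boolean realizer along a DFS traversal of the block-cut tree, so that the restriction to any single block is exactly the original block order. Layer~(ii) consists of a constant number (bounded by $9$) of linear orders encoding the DFS structure of the block-cut tree, enabling the dispatch $\tau$ to detect whether two elements share a primary block and, when they do not, to identify the intermediate cut vertices $v_0,\dots,v_{m-1}$. Layer~(iii) consists of $O(2^d)$ linear orders that, for each possible $d$-bit \emph{block signature} $\sigma$, encode the above/below status of each element of $P$ relative to the set of cut vertices whose layer-(i) signature equals $\sigma$; this layer is the direct block analogue of the component dispatch mechanism used in Theorem~\ref{thm:bdim-components}, applied to cut vertices instead of components.

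Finally, $\tau$ reads the bit string $q(x,y,\cgB)$ by first consulting layer~(ii): if $x$ and $y$ share a primary block $B$, it restricts to the layer-(i) bits and invokes the block's own $\tau_B$; otherwise it decodes the cut-vertex sequence $v_0,\dots,v_{m-1}$ and, using the layer-(iii) bits, verifies the chain $x \le v_0 \le \dots \le v_{m-1} \le y$. The main technical obstacle will be realizing layer~(iii) with only $O(2^d)$ orders rather than one per cut vertex, so that the bound is independent of the number of blocks in $P$. This requires the amortization trick of grouping cut vertices by their layer-(i) signature exactly as in the proof of Theorem~\ref{thm:bdim-components}, now extended to handle both the ``below~$x$'' and ``above~$y$'' side of each cut vertex simultaneously, which is what accounts for the doubled constants in the bound $9+d+18\cdot 2^d$ compared with $2+d+4\cdot 2^d$.
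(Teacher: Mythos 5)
First, a point of order: the paper does not prove Theorem~\ref{thm:bdim-blocks}; it is quoted from M\'{e}sz\'{a}ros, Micek and Trotter~\cite{bib:MeMiTr}, so there is no in-paper proof to compare against. Judged on its own terms, your proposal is a plan rather than a proof, and the plan has a genuine gap at exactly the point you yourself flag as ``the main technical obstacle.'' The difficulty is what the evaluation function $\tau$ can actually see. By definition, $q(x,y,\cgB)$ records only, for each linear order in $\cgB$, whether $x<y$ in that order; it carries no direct information about any third element. So $\tau$ cannot ``decode the cut-vertex sequence $v_0,\dots,v_{m-1}$'' (the identity of these vertices is not a function of a bounded number of bits about the relative position of $x$ and $y$), and it certainly cannot ``verify the chain $x\le v_0\le\dots\le v_{m-1}\le y$,'' since the number of links $v_i\le v_{i+1}$ is unbounded while $\tau$ is a fixed Boolean function of $9+d+18\cdot 2^d$ bits. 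Everything about which cut vertices lie between $x$ and $y$, and how they compare to one another, must be pre-encoded into the relative order of $x$ and $y$ across the linear orders, and designing orders that achieve this is the entire content of the theorem. Your layer~(iii), which groups cut vertices by a $d$-bit signature, addresses only the comparisons of $x$ and $y$ against cut vertices, not the (unboundedly many) cut-vertex-to-cut-vertex comparisons along the block path, and it is not explained how even the former is extracted from bits that compare $x$ only with $y$.

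There are also smaller unresolved issues: cut vertices belong to several blocks, so after assigning primary blocks, the restriction of your layer-(i) orders to a non-root block $B$ omits the cut vertex joining $B$ to its parent, and $\tau_B$ cannot then be invoked on pairs involving that vertex; and the claim that $9$ auxiliary orders suffice to determine whether two elements share a block is asserted, not constructed. Your key structural observation (cross-block comparability factors through the cut vertices on the block-tree path, in order) is correct and is indeed the starting point of the argument in~\cite{bib:MeMiTr}, but the actual proof there requires substantially more machinery precisely to circumvent the obstruction above; the sketch as written does not yet constitute a proof.
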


Again, this inequality cannot be improved dramatically, as it is shown
in~\cite{bib:MeMiTr} that for large $d$, there is a poset $P$ with
$\bdim(P)=\Omega(2^d/d)$ and $\bdim(B)\le d$ for every block $B$ of
$P$.

\section{Planar Posets and Dimension}

A poset $P$ is \textit{planar} if its order diagram can be drawn in
the plane without edge crossings.  If a poset is planar, then its
cover graph is planar, although the converse does not hold in general.
It is easy to see that the standard example $S_n$ is planar when
$2\le n\le 4$ and non-planar when $n\ge5$.

In Figure~\ref{fig:Kelly}, we show a construction due to
Kelly~\cite{bib:Kell} showing that for all $n\ge5$, the non-planar
poset $S_n$ is a subposet of a planar poset.  This specific figure is
a diagram where $n=5$, but it should be clear how we intend that the
diagram should be modified for other values of $n$. Of course, the
Kelly posets show that there are planar posets with arbitrarily large
dimension.

\begin{figure}
\begin{tikzpicture}[nodes={circle,fill=white,draw=black,line width=1pt,inner sep=0pt, minimum size=7pt},scale=0.38]

\begin{scope}[yshift=10mm,xshift=5mm]
		\node[label={above:$w_1$}]  (0) at (-8, 0) {};
		\node[label={above:$w_2$}]  (1) at (-6, 1) {};
		\node[label={above:$w_3$}]  (2) at (-4, 2) {};
		\node[label={above:$w_4$}]  (3) at (-2, 3) {};
\end{scope}

\begin{scope}[yshift=-10mm,xshift=-5mm]
		\node[label={below:$z_1$}]  (6) at (-12, -2) {};
		\node[label={below:$z_2$}]  (7) at (-14, -3) {};
		\node[label={below:$z_3$}]  (8) at (-16, -4) {};
		\node[label={below:$z_4$}]  (9) at (-18, -5) {};
\end{scope}

\begin{scope}[xshift=-5mm]
		\node[label={north east:$b_1$}]  (5) at (-11, 1) {};
		\node[label={north east:$b_2$}]  (12) at (-12, 3) {};
		\node[label={north east:$b_3$}]  (13) at (-13, 5) {};
		\node[label={north east:$b_4$}]  (14) at (-14, 7) {};
		\node[label={north east:$b_5$}]  (17) at (-15, 9) {};
\end{scope}

		\node[label={south west:$a_1$}]  (4) at (-9, -3) {};
		\node[label={south west:$a_2$}]  (10) at (-8, -5) {};
		\node[label={south west:$a_3$}]  (11) at (-7, -7) {};
		\node[label={south west:$a_4$}]  (15) at (-6, -9) {};
		\node[label={south west:$a_5$}]  (16) at (-5, -11) {};

\begin{scope}[line width=1pt]
		\draw (0) edge (1);
		\draw (1) to (2);
		\draw (2) to (3);
		\draw (6) to (7);
		\draw (7) to (8);
		\draw (8) to (9);
		\draw (4) to (0);
		\draw (0) to (12);
		\draw (10) to (1);
		\draw (1) to (13);
		\draw (11) to (2);
		\draw (2) to (14);
		\draw (5) to (6);
		\draw (6) to (10);
		\draw (12) to (7);
		\draw (7) to (11);
		\draw (15) to (3);
		\draw (8) to (15);
		\draw (9) to (16);
		\draw (3) to (17);
		\draw (13) to (8);
		\draw (14) to (9);
\end{scope}

\end{tikzpicture}
\caption{The Kelly Construction}
\label{fig:Kelly}
\end{figure}
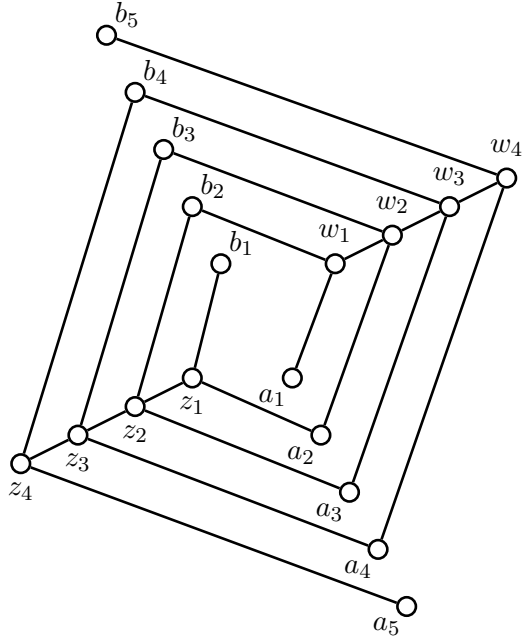

In retrospect, the Kelly posets should have prompted research on the
following questions:

\begin{enumerate}
\item Must a planar poset with large dimension have large height?
\item Must a planar poset with large dimension have many minimal
  elements (and many maximal elements)?
\item Must a planar poset with large dimension contain two large
  chains with all points in one incomparable with all points in the
  other?
\item Must a planar poset with large dimension contain a large
  standard example?
\end{enumerate}

However, these natural questions lay dormant for more than $20$~years,
so here is a compact summary of work done in the last five years.  The
first three questions in this listing have been answered in the
affirmative.  However, the last question in the list has been open for
nearly $30$ years.

In 2014, Streib and Trotter~\cite{bib:StrTro} proved that for every
positive integer $h$, there is a least positive integer $c_h$ so that
if $P$ is a poset of height~$h$ and the cover graph of $P$ is planar,
then $\dim(P)\le c_h$.  The proof given in~\cite{bib:StrTro} merely
established the existence of $c_h$ and gave no useful information
about its size.  However, an exponential upper bound was given
in~\cite{bib:JMOW}, and more recently, two groups have announced a
polynomial upper bound on $c_h$.  Joret, Micek, Ossona de Mendez and
Wiechert have shown how their results in~\cite{bib:JMOW} can be
extended to obtain this conclusion.  Meanwhile, Kozik, Krawczyk, Micek
and Trotter~\cite{bib:KKMT} have a much more complicated argument which
yields a better exponent.  From Below, Joret, Micek and
Wiechert~\cite{bib:JoMiWi} showed that the $c_h\ge 2h-2$.

For planar posets, Joret, Micek and Wiechert~\cite{bib:JoMiWi} have a
linear upper bound, i.e., they show that a planar poset of height $h$
has dimension at most $192h+96$.  They have also given $4h/3-2$ as a
lower bound.

In~\cite{bib:TroWan}, Trotter and Wang proved that the dimension of a
planar poset with $t$ minimal elements is at most $2t+1$. They also
showed that this inequality is tight for $t=1$ and $t=2$. For $t\ge3$,
they were only able to show that there is a planar poset with $t$
minimal elements which has dimension $t+3$.  Using duality, analogous
statements hold for maximal elements.  Note, however, that there are
no statements of this type for posets with planar cover graphs, since
as pointed out in~\cite{bib:StrTro}, for every $d\ge1$, there is a
poset $P$ with a zero and a one such that $\dim(P)\ge d$ and the cover
graph of $P$ is planar.

In~\cite{bib:HSTWW}, Howard, Streib, Trotter, Walczak and Wang proved
that for each $k\ge 1$, there is a constant $d_k$ so that if $P$ is a
poset which does not contain two chains $C_1$ and $C_2$ each of size
$k$ such that all points of $C_1$ are incomparable with all points of
$C_2$, then the dimension of $P$ is at most $d_k$.

In~\cite{bib:FeTrWi}, Felsner, Trotter and Wiechert showed that if $P$
is a poset and the cover graph of $P$ is outerplanar, then
$\dim(P)\le 4$.  They also gave an example to show that the inequality
is best possible. This same example shows that the inequality is tight
for Boolean dimension and local dimension.  The argument for Boolean
dimension is trivial, while the argument for local dimension has the
same spirit as the proof of Theorem~\ref{thm:ldim-grow}.  We leave the
details of this proof as an exercise.

In~\cite{bib:NesPud}, Ne\v{s}et\v{r}il and Pudl\'{a}k note that the
Kelly posets have Boolean dimension at most~$4$, and they asked
whether Boolean dimension is bounded for the class of planar posets.
This challenging question remains open.  We note that it is
conceivable (although we consider it very unlikely) that Boolean
dimension is bounded for planar posets but unbounded for posets with
planar cover graphs.

In presenting his concept of local dimension to conference
participants, Ueckerdt noted that standard examples have local
dimension at most~$3$, and it is easy to see that in fact, the Kelly
posets have local dimension at most~$3$.  This leads naturally to the
question: Do planar posets have bounded local dimension?  However,
this question has recently been answered in the negative by Bosek,
Grytczuk and Trotter~\cite{bib:BoGrTr}.

\section{Connections with Structural Graph Theory}\label{sec:sgt}

In this section, we explore which variants of dimension can be bounded
in terms of path-width or tree width. For the sake of completeness, we
include here the basic definitions of tree-width and path-width. Let
$G$ be a graph with vertex set $V(G)$. A \textit{tree-decomposition}
of $G$ is a pair $(T,\cgB)$ where $T$ is a tree with vertex set
$V(T)$, and $\cgB=\{B_t:t\in V(T)\}$ is a family of subsets of $V(G)$
satisfying:
\begin{enumerate}
\item[($T_1$)] for each $v\in V(G)$ there exists $t\in V(T)$ with
  $v\in B_t$; and for every edge $uv$ in $G$ there exists $t\in V(T)$
  with $u,v\in B_t$;
\item[($T_2$)] for each $v\in V(G)$, if $v\in B_t \cap B_{t''}$ for
  some $t,t''\in V(T)$, and $t'$ lies on the path in $T$ between $t$
  and $t''$, then $v\in B_{t'}$.
\end{enumerate}
It is common to refer to the tree $T$ as the \emph{host tree} in the
tree-decomposition, and when $t\in V(T)$, the induced subgraph
$G[B_t]$ of $G$ is referred to as a \emph{bag}. Note that, $|B_t|$ is
just the number of vertices of $G[B_{t}]$.

The \emph{width} of a tree-decomposition $(T,\cgB)$ is defined as
\[\max_{t\in V(T)}\{|B_{t}|-1\}.\]  The \emph{tree-width} of
$G$, $\tw(G)$, is the minimum width of a tree-decomposition of $G$.

A tree-decomposition $(T,\{B_t:t\in V(T)\})$ is called a
\emph{path-decomposition} when the host tree $T$ is a path.  In turn,
the \textit{path-width} of $G$, $\pw(G)$, is the minimum width of a
path-decomposition of $G$. Observe that $\pw(G)\geq \tw(G)$ since
every path-decomposition of $G$ is a tree-decomposition of $G$.

We encourage readers to consult the discussion of connections between
Dushnik-Miller dimension and structural graph theory as detailed
in~\cite{bib:JMMTWW}, \cite{bib:TroWan} and~\cite{bib:TroWal}. Here we
provide a quick summary of highlights.

The first major result linking dimension and structural graph theory
is due to Joret, Micek, Milans, Trotter, Walczak and
Wang~\cite{bib:JMMTWW}, who proved that the dimension of a poset is
bounded as a function of its height and the tree-width of its cover
graph. More formally, they showed that for each pair $(t,h)$ of
positive integers, there is a least positive integer $d(t,h)$ so that
if $P$ is a poset of height $h$ and the tree-width of the cover graph
of $P$ is~$t$, then $\dim(P)\le d(t,h)$. A poset of height~$1$ is an
antichain and has dimension at most~$2$, so it is of interest to study
$d(t,h)$ only when $h\ge2$.  Trotter and Moore~\cite{bib:TroMoo}
showed that $d(1,h)=3$ for all $h\ge2$, and Joret, Micek, Trotter,
Wang, and Wiechert~\cite{bib:JMTWW} showed that $d(2,h)\le 1276$ for
all $h\ge2$.  As is well known, Kelly posets have cover graphs with
path-width at most~$3$, so $d(t,h)$ goes to infinity with $h$ when
$t\ge3$.

Joret, Micek and Wiechert~\cite{bib:JoMiWi} have recently shown that
for fixed $t\ge3$, $d(t,h)$ grows exponentially with $h$.  The best
bound to date in the general case is due to Joret, Micek, Ossona de
Mendez and Wiechert~\cite{bib:JMOW}, where they prove:

\begin{equation}
2^{\Omega(h^{\lfloor (t-1)/2 \rfloor})}
 \leq d(t,h)
 \leq 4^{\binom{t+3h-3}{t}}.
\end{equation}

Now we turn our attention to analogous results for Boolean dimension
and local dimension.  In 2016, Micek and Walczak~\cite{bib:MicWal}
proved that the Boolean dimension of a poset is bounded in terms of
the path-width of its cover graph, independent of its height.  In
2017, Felsner, M\'{e}sz\'{a}ros and Micek~\cite{bib:FeMeMi} proved
that in fact, the Boolean dimension of a poset is bounded in terms of
the tree-width of its cover graph, independent of its height.

Now on to local dimension.  We will first prove the following result
which asserts that the local dimension of a poset is bounded in terms
of the path-width of its cover graph, independent of its height.

\begin{theorem}\label{thm:ldim-pw}
  For every $t\ge1$, there is a least positive integer $d(t)$ so that
  if $P$ is a poset whose cover graph has path-width~$t$, then
  $\ldim(P)\le d(t)$.
\end{theorem}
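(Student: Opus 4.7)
The plan is to use the path-decomposition directly to build a structured coloring of the vertices into ``tracks'' and then assemble a local realizer whose size depends only on $t$.

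Fix a path-decomposition $(B_1,B_2,\dots,B_m)$ of the cover graph of $P$ of width $t$, and smooth it in the standard way so that $|B_i \triangle B_{i+1}|\le 1$, i.e.\ a single vertex enters or leaves at each step. Since at most $t+1$ vertices coexist in any bag and they enter one at a time, a greedy online coloring assigns each vertex $v$ a color $c(v)\in[t+1]$ so that vertices sharing any bag receive distinct colors. For each $v$ let $I(v)=[\ell(v),r(v)]$ be the range of indices of bags containing $v$; vertices of the same color have pairwise disjoint intervals. Classify each pair of distinct elements $(x,y)$ by the ordered pair $(c(x),c(y))$ of colors and by whether $I(x),I(y)$ are \emph{disjoint} or \emph{overlapping}; note that overlapping pairs must have $c(x)\ne c(y)$.

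I would first dispose of the disjoint case. For each ordered color pair $(i,j)$ I introduce a single $\ple$ $L^{\mathrm{disj}}_{i,j}$ whose ground set is the elements of color $i$ or $j$, in which elements of color $j$ are placed \emph{above} the color-$i$ elements they succeed in time (witnessing $y<x$ for disjoint incomparable pairs with $r(x)<\ell(y)$), while ensuring that the $P$-order is respected on the ground set. Consistency with $P$ is automatic because $u<v$ in $P$ together with $r(u)<\ell(v)$ forces $u$ and $v$ to be linked by a cover-graph path whose interval structure matches the chosen orientation. This contributes $(t+1)^2$ $\ple$'s and multiplicity at most $2(t+1)$ per vertex. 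A symmetric comparability-only family, consisting for each $i\in[m]$ of a fixed local realizer of the subposet induced on $B_i$ (of size $\le t+1$, hence multiplicity $\le t+1$), then handles all comparabilities between vertices sharing a bag.

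The main obstacle is the overlapping-incomparable case. Here, for each ordered color pair $(i,j)$ with $i\ne j$, I iterate along the path-decomposition and build a family of $\ple$'s indexed by ``events'' on track $j$: whenever a new color-$j$ element $y$ enters some bag $B_s$ while a color-$i$ element $x$ is present, I add a short $\ple$ witnessing any required inversions between $y$ and the currently present color-$i$ and color-$j$ elements. The naive worst case is that a single color-$i$ element $x$ coexists with many color-$j$ elements in succession, yielding too many $\ple$'s containing $x$. To control this, I merge consecutive events: a single $\ple$ rooted at the birth of $x$ suffices to record all inversions needed between $x$ and every color-$j$ element spawned during $I(x)$, provided we are careful that those events are added to the same $\ple$ (the order of color-$j$ elements across time in the $\ple$ is forced by $P$-compatibility, since any two color-$j$ elements coexisting with $x$ have disjoint intervals and so are either incomparable or comparable in a direction determined by time). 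Summed over the $(t+1)^2$ color pairs, each vertex is a ``root'' of at most $O(t^2)$ such merged $\ple$'s and participates as a later event in at most $O(t^2)$ more, giving per-vertex multiplicity $O(t^2)$.

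Combining the three families yields a local realizer with multiplicity bounded by a function $d(t)$ of the path-width, establishing the theorem. The hardest step is the merging construction in the overlapping case: one must verify that writing the inversions witnessed by a root vertex $x$ into a single consistent $\ple$ never conflicts with a required $P$-comparability further down the path, which is where the track structure and the smoothness of the decomposition are essential, because they guarantee that whenever two color-$j$ elements both need to be inverted with $x$, their own mutual relation is already forced by the order in which they appeared on track $j$.
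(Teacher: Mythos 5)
Your construction breaks down exactly where the real difficulty of the theorem lives: reversing incomparable pairs $(x,y)$ whose intervals in the path-decomposition are \emph{disjoint}, say with $I(x)$ entirely to the left of $I(y)$. You dispose of these with a single $\ple$ per ordered color pair and the remark that ``consistency with $P$ is automatic,'' but it is not: for a fixed color pair $(i,j)$ the set of such pairs can contain a strict alternating cycle. Take color-$i$ elements $x_1,x_2$ and color-$j$ elements $y_1,y_2$ with pairwise disjoint intervals ordered $x_1,x_2,y_1,y_2$ along the path, with $x_1\parallel y_1$, $x_2\parallel y_2$, $x_2<y_1$ and $x_1<y_2$ in $P$. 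No single linear order extending $P$ on its ground set can place $x_1$ over $y_1$ and $x_2$ over $y_2$, since that forces $x_1>y_1\ge x_2>y_2\ge x_1$. Such configurations arise already for posets of very small path-width, so the number of $\ple$'s needed per color pair is unbounded, and bounding how often a \emph{single element} must appear among them is precisely the content of the theorem; your classification by color and disjoint/overlapping does not touch it. A related error occurs in your overlapping case: the direction of a comparability between two elements with disjoint intervals is \emph{not} ``determined by time,'' because a covering chain witnessing $u<v$ may wander left and right through the decomposition; so the order of the color-$j$ elements in your merged $\ple$ is not forced, and the merging step is unjustified.

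The paper's proof is organized entirely around this obstacle. It first passes to the split of $P$ (so only min--max pairs need reversing, at a bounded cost in local dimension), then attaches to each element a bounded descriptor recording its $P$-relation to the at most $t+1$ elements whose intervals contain its own, and to each incomparable pair a ``type'' recording the relations between the two associated stacks. The key structural claims show that any comparability between far-apart elements must be routed through these stacks, so that within a fixed type class the incomparability pattern is rigid enough to be decomposed, via a distance layering in an auxiliary bipartite graph, into reversible sets meeting each element at most twice. Some device of this kind --- a bounded local encoding that determines comparability across the whole decomposition --- appears indispensable, and your proposal does not contain one.
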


The details of the proof show that $d(t)$ is
$O(5^{(t+1)^2})$. However, we will then show that the local dimension
of a poset is \textit{not} bounded in terms of the tree-width of its
cover graph independent of its height.

\begin{theorem}\label{thm:ldim-tw}
  For every $d\ge1$, there exists a poset $P$ with $\ldim(P)>d$ such
  that the cover graph of $P$ has tree-width at most~$3$.
\end{theorem}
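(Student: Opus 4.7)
Our plan is to exhibit, for each $d \ge 1$, a poset $P_d$ with $\ldim(P_d) > d$ whose cover graph has tree-width at most $3$. The basic idea is to build $P_d$ as a large ``tree of small blocks'': the cover graph will be a connected graph each of whose blocks has at most four vertices, which automatically gives tree-width at most $3$ via the block-cut tree. Natural candidates for the block are a $4$-cycle in the cover graph (a crown $S_2$, or a ``diamond'' with two incomparable midpoints), joined to neighboring blocks through shared minimal or maximal cut vertices. Because standard examples $S_2$ have local dimension~$2$, any blow-up of local dimension must come from the global combinatorics of how these blocks are glued, not from any single block.

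To show $\ldim(P_d) > d$, I would follow the Ramsey-theoretic template of Theorems~\ref{thm:ldim-grow} and~\ref{thm:ldim-grow-1}. Suppose $\cgL = \{L_1,\dots,L_t\}$ is a local realizer of $P_d$ with $\mu(P_d,\cgL) \le d$. Label the blocks of the cover graph by elements of some large set, and to each $k$-tuple of blocks in a natural nested order, assign a color recording, for the distinguished incomparable pair the outermost block contributes, the least index $m$ of an $L_m \in \cgL$ that reverses them together with the \textit{occurrence numbers} of those two elements in $L_m$. The number of colors is polynomial in~$d$, so by iterating Ramsey's theorem---taking $P_d$ to have enough blocks to invoke $\Ram(k,h;d^{O(1)})$---we may pass to a uniform sub-collection of blocks on which \emph{every} such $k$-tuple is resolved with the same occurrence-pattern by a single $L_m$.

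From this monochromatic sub-collection I expect to force a contradiction in the style of Theorem~\ref{thm:ldim-grow}: the uniform pattern forces a long descending chain inside a single $\ple$ $L_m$ built from pairs coming from different blocks, but transitivity inside $L_m$ then produces a comparability $x > y$ in $L_m$ where in fact $x < y$ in $P_d$ (here we exploit the fact that cut vertices of the block tree appear simultaneously in many incomparabilities and so are heavily constrained). This violates property~(1) of a local realizer and contradicts $\mu(P_d,\cgL) \le d$, giving $\ldim(P_d) \ge d+1$.

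The main obstacle, and the real content of the theorem, is the design of the gadget tree: we must arrange the $4$-element blocks so that the incomparable pairs they introduce genuinely require many distinct $\ple$s to witness, rather than a few $\ple$s that efficiently re-use occurrences of shared cut vertices. This is exactly the subtlety that separates local dimension from Boolean dimension, which \emph{is} bounded by tree-width by the result of Felsner, M\'esz\'aros, and Micek cited in Section~\ref{sec:sgt}. Concretely, the cut vertices linking neighboring blocks must be ``reused'' so aggressively in incomparabilities across the tree that the frequency bound $\mu \le d$ becomes the binding constraint, and Ramsey's theorem converts that bounded frequency into an unavoidable monochromatic collision. This mirrors the Bosek--Grytczuk--Trotter construction mentioned earlier in the paper for the blocks result, which already produces posets of large local dimension whose blocks have local dimension at most~$3$; one should be able to arrange their blocks to have size at most $4$, which would furnish exactly the tree-width $3$ bound claimed.
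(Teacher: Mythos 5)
Your proposal has a genuine gap: the construction, which you yourself identify as ``the real content of the theorem,'' is never actually supplied. You propose a tree of blocks each on at most four vertices and defer the design to an unspecified shrinking of the Bosek--Grytczuk--Trotter gadgets, but no argument is given that their blocks can be made that small, and the paper's construction is quite different. The paper uses the Joret--Micek--Wiechert poset $P_n$, built from a complete binary tree $B_n\cong T_n$ as an up set and a dual copy $A_n\cong T_n^*$ as a down set, with $a_x<b_y$ exactly when neither binary string is an initial segment of the other. Its cover graph is \emph{not} a tree of $4$-vertex blocks (already for $n=1$ it is a $6$-cycle, hence a single block on six vertices); the tree-width bound comes instead from an explicit, inductively constructed tree decomposition with bags of size four.

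There is also a structural problem with your Ramsey step. You color $k$-tuples of blocks ``in a natural nested order'' and invoke ordinary Ramsey's theorem. But any monochromatic witness extracted from a linearly (nested) ordered family of small blocks lives in a sub-poset whose cover graph has bounded path-width, and by Theorem~\ref{thm:ldim-pw} such posets have \emph{bounded} local dimension, so no contradiction of the kind you describe can be forced along a path of blocks. The witness structure must genuinely branch --- this is precisely the gap between path-width and tree-width that the theorem exploits --- and the appropriate tool is Milliken's Ramsey theorem for trees (Theorem~\ref{thm:ramsey-bt}), applied to strong copies of $T_1$ in $T_n$ colored by the occurrence pair $(\alpha,\beta)$ of the least partial linear extension reversing $a_x>b_z$. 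A monochromatic strong copy of $T_3$ then yields the contradiction from four specific incomparable pairs such as $(a_\emptyset,b_{111})$, $(a_\emptyset,b_{101})$, $(a_{10},b_{101})$ and $(a_{11},b_{111})$. Without an explicit poset and without a tree Ramsey theorem, the proposal does not constitute a proof.
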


\subsection{Local Dimension and Path-Width}

Here we give the proof of Theorem~\ref{thm:ldim-pw}.  Our argument
requires some preliminary material on a concept introduced by
Kimble~\cite{bib:Kimb-pc}.  The \textit{split of a poset $P$} is the
height~$2$ poset $Q$ whose minimal elements are $\{x':x\in P\}$ and
whose maximal elements are $\{x'':x\in P\}$. Furthermore, for all
$x,y\in P$ not necessarily distinct, $x'<y''$ in $Q$ if and only if
$x\le y$ in $P$.

The following well known result is an easy exercise, but it is stated
here for emphasis.

\begin{theorem}\label{thm:split}
  Let $Q$ be the split of a poset $P$. Then
  $\dim(P)\le \dim(Q)\le 1+\dim(P)$.
\end{theorem}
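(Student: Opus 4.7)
The proof consists of two inequalities, handled separately: the upper bound $\dim(Q)\le 1+\dim(P)$ by explicit construction, and the lower bound $\dim(P)\le\dim(Q)$ by extraction.

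For the upper bound, my plan is to start from a realizer $\cgR=\{L_1,\dots,L_d\}$ of $P$ with $d=\dim(P)$. For each $i\in[d]$, define $M_i$ on $Q$ as the interleaving of $L_i$: if $L_i$ enumerates the ground set of $P$ as $p_1<p_2<\cdots<p_n$, let $M_i$ be the linear order
\[
 p_1' < p_1'' < p_2' < p_2'' < \cdots < p_n' < p_n''.
\]
Each $M_i$ is a linear extension of $Q$ because the only forced relations are of the form $x'<y''$ with $x\le y$ in $P$, and these hold either within a slot (when $x=y$) or across slots whose order respects $L_i$. To reverse the pairs not handled by the interleaved $M_i$'s, I add one extension in split form: fix a linear extension $L=q_1<q_2<\cdots<q_n$ of $P$ and set
\[
 M_{d+1}\colon\quad q_n'<q_{n-1}'<\cdots<q_1'<q_n''<q_{n-1}''<\cdots<q_1''.
\]
Since all primes precede all doubles, every forced relation $x'<y''$ holds, so $M_{d+1}$ is a linear extension. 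I would then verify the realizer property by a case analysis over the three kinds of incomparable pairs of $Q$: cross pairs $(y',x'')$ with $y\not\le x$ are reversed by some interleaved $M_i$ (using the realizer property of $\cgR$ to witness $x<y$ or $x\parallel y$); same-side pairs whose ground elements are incomparable in $P$ appear in both orders across the interleaved $M_i$'s; and same-side pairs whose ground elements are comparable in $P$ are reversed by $M_{d+1}$ thanks to its two reversed blocks.

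For the lower bound, the strategy is extraction. Given a realizer $\{L_1,\dots,L_d\}$ of $Q$ with $d=\dim(Q)$, every critical pair $(x,y)$ of $P$ produces an incomparable pair $(x'',y'')$ in $Q$, which the realizer must reverse, so some $L_j$ has $y''<_{L_j}x''$. I would assign each critical pair of $P$ to such a witnessing index $j$ and, for each $i\in[d]$, define $M_i$ as a linear extension of $P$ that reverses the critical pairs assigned to $i$, obtained by topologically sorting the digraph on $P$ whose arrows are the Hasse arrows of $P$ together with a reversal arrow $y\to x$ for each critical pair $(x,y)$ assigned to $i$. Any critical pair $(x,y)$ is then reversed by some $M_j$, so the collection is a realizer of $P$ of size $d$.

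The main obstacle is ensuring the digraph used to build $M_i$ is acyclic. A naive assignment can fail: in $\mathbf{2}+\mathbf{2}$ a single $L_i$ can simultaneously invite reversal of both critical pairs of $P$, whose joint reversal is inconsistent with $P$. To overcome this I would first reduce to a canonical realizer of $Q$ in which the restriction of each $L_i$ to the doubled antichain is itself a linear extension of $P$. The reduction is a bubbling argument: whenever $L_i$ has $y''<_{L_i}x''$ with $x<_P y$, one can move $x''$ to immediately precede $y''$ in $L_i$ without disturbing the forced relations $x'<x''$, $y'<y''$, or $z'<x''$ for $z\le x$, since $x<_P y$ forces all such $z'$ to precede $y''$ in $L_i$. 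The subtlety is that such a swap can remove the witness that a given critical pair of $Q$ is reversed, so the bubbling must be applied in a coordinated fashion (e.g., pairing swaps so that no critical pair of $Q$ loses its last reversal). Once the reduction is in place, the doubled projection $K_i$ of each $L_i$ is already a linear extension of $P$, the digraph is automatically acyclic, and $\{K_1,\dots,K_d\}$ realizes $P$ because each incomparable pair $(x,y)$ of $P$ is reversed by the $K_j$ whose parent $L_j$ reverses $(x'',y'')$. This is the step that requires the most care.
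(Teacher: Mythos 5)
Your upper bound is correct and complete in outline: the interleaved orders $M_1,\dots,M_d$ are indeed linear extensions of $Q$, and together with the single split-form extension $M_{d+1}$ they reverse all three types of incomparable pairs of $Q$ exactly as you describe. (The paper states this theorem without proof, calling it an easy exercise, so there is no official argument to compare against.)

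The lower bound, however, has a genuine gap, and it is the one you flag yourself. You try to read the order of $P$ off the positions of the doubled maximal elements $x''$ in a realizer of $Q$; but $\{x'':x\in P\}$ is an antichain in $Q$, so a realizer of $Q$ is free to order these elements arbitrarily (indeed it \emph{must} put $y''$ below $x''$ in some member even when $x<y$ in $P$, since $(x'',y'')\in\Inc(Q)$), which is why your $\mathbf{2}+\mathbf{2}$ obstruction arises. Your repair --- bubbling each $L_i$ into a ``canonical'' realizer whose double-primed projection is a linear extension of $P$ --- is exactly where the proof breaks: moving $x''$ down past some $w''$ with $w\parallel x$ in $P$ can destroy the last witness for the pair $(x'',w'')$, and ``pairing swaps so that no critical pair loses its last reversal'' is an unproven claim; nothing in the proposal shows the process terminates in a family that still reverses $(x'',y'')$ for every $(x,y)\in\Inc(P)$. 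The missing idea is to use the min--max pairs instead, since the comparabilities $x'<y''$ for $x\le y$ in $P$ are precisely where $Q$ stores the order of $P$. For $(x,y)\in\Inc(P)$ we have $x\not\le y$, so $(x',y'')\in\Inc(Q)$ and some $L_i$ has $x'>y''$; let $S_i$ be the set of pairs of $\Inc(P)$ assigned to $i$ this way. Each $S_i$ is reversible: if $(x_1,y_1),\dots,(x_k,y_k)\in S_i$ formed an alternating cycle, i.e.\ $x_{j+1}\le y_j$ in $P$ cyclically, then $x_{j+1}'<y_j''<x_j'$ in $L_i$ (the first inequality because $x_{j+1}'<y_j''$ in $Q$, the second by membership in $S_i$), forcing $x_1'>x_2'>\dots>x_k'>x_1'$ in $L_i$, a contradiction. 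Equivalently, your auxiliary digraph is automatically acyclic under this assignment, with no massaging of the realizer of $Q$, and one obtains $d=\dim(Q)$ linear extensions of $P$ reversing all of $\Inc(P)$.
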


Recent work in dimension theory has made use of a variant of the
notion of a split. Let $P$ be a poset and let $X$ denote the ground
set of $P$. The \textit{split-in-place} of $P$ is the poset $R$
obtained as follows:
\begin{enumerate}
\item The ground set of $R$ is disjoint union of three sets
  $X\cup X'\cup X''$.
\item $X'=\{x':x\in X\}=\Min(R)$ and $X''=\{x'':x\in X\}=\Max(R)$.
\item The subposet of $R$ determined by $X$ is $P$.
\item In $R$, for each $x\in X$, $x'$ is only covered by $x$, and
  $x''$ only covers $x$.
\end{enumerate}
Observe that the split of $P$ is a subposet of the split-in-place of
$P$.

Essentially the same argument used to prove Theorem~\ref{thm:split}
yields the following extension.

\begin{theorem}\label{thm:split-in-place}
  Let $Q$ be the split and let $R$ be the split-in-place of a poset
  $P$.  Then $\dim(P)\le \dim(Q)\le \dim(R)\le 1+\dim(P)$.
\end{theorem}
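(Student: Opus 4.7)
\emph{Proof proposal.} The two leftmost inequalities are immediate. That $\dim(P)\le\dim(Q)$ is Theorem~\ref{thm:split}, and $\dim(Q)\le\dim(R)$ follows from monotonicity of dimension, since by construction the subposet of $R$ induced on $X'\cup X''$ is exactly the split $Q$: in $R$, $x'<y''$ holds (via $x'<x<y''$, when $x\le y$ in $P$) precisely under the defining condition for $Q$. The substance of the theorem is therefore the rightmost inequality $\dim(R)\le 1+\dim(P)$.

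My plan is to mimic the strategy of Theorem~\ref{thm:dim-1pt} and build $d+1$ linear extensions of $R$ from a realizer $\{L_1,\ldots,L_d\}$ of $P$, where $d=\dim(P)$. After relabeling $X=\{x_1,\ldots,x_n\}$ so that $x_1<_{L_1}\cdots<_{L_1}x_n$, I would for each $i\in[d]$ define $M_i$ by reading $L_i$ from bottom to top and replacing each $x\in X$ by the consecutive block $x'<x<x''$; a routine check confirms $M_i$ is a linear extension of $R$. For the extra extension I would take
\[
  M_{d+1}\;=\;(x_n',x_{n-1}',\ldots,x_1')\;<\;(x_1,x_2,\ldots,x_n)\;<\;(x_n'',x_{n-1}'',\ldots,x_1''),
\]
i.e.\ $X'$ in reverse $L_1$-order, then $X$ in $L_1$-order, then $X''$ in reverse $L_1$-order. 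Since $X'<X<X''$ as blocks of $M_{d+1}$ and $L_1$ is a linear extension of $P$, $M_{d+1}$ is a linear extension of $R$.

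It then remains to check that $\{M_1,\ldots,M_{d+1}\}$ reverses every incomparable pair of $R$. The pairs split into six kinds: $\{x,y\}\subseteq X$, and the mixed types $\{x',y\}$, $\{x,y''\}$, $\{x',y''\}$ (with the appropriate non-comparability constraints), together with $\{x',y'\}$ and $\{x'',y''\}$ for $x\ne y$. Pairs inside $X$ are reversed by the $L_i$'s through the $M_i$'s. For each mixed type one direction is supplied by the $X'<X<X''$ block structure of $M_{d+1}$, and the other by any $L_i$ ordering the underlying $X$-elements the opposite way. The main obstacle, and the reason $M_{d+1}$ must be chosen as above, is the pairs $\{x',y'\}$ and $\{x'',y''\}$ with $x$ and $y$ \emph{comparable} in $P$: every $M_i$ inherits the $L_i$-order of $x$ versus $y$, so no $M_i$ reverses such a pair. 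Reversing the $L_1$-order within both the $X'$-block and the $X''$-block of $M_{d+1}$ simultaneously flips every such $\{x',y'\}$ and every such $\{x'',y''\}$, while preserving $M_{d+1}$ as a linear extension of $R$ because the three blocks $X'<X<X''$ already absorb every cross-block $R$-relation. Once this observation is made, the remaining case analysis is routine.
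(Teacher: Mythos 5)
Your proposal is correct and is essentially the argument the paper has in mind: the paper gives no explicit proof, remarking only that ``essentially the same argument used to prove Theorem~\ref{thm:split}'' works, and your construction (replacing each $x$ by the block $x'<x<x''$ in each $L_i$, plus one extra extension $X'<X<X''$ with the outer blocks dualized to reverse the pairs $\{x',y'\}$ and $\{x'',y''\}$ coming from comparabilities of $P$) is exactly that extension, correctly identifying the one family of incomparable pairs the $M_i$ cannot handle.
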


We note that there is no analogue of this theorem for Boolean dimension. Indeed, while the Boolean dimension of the canonical interval order is unbounded, it is easy to show that the split of any interval order has Boolean dimension at most 6. Here is the analogue of the preceding
theorem for local dimension.

\begin{lemma}\label{lem:ldim-split}
  Let $Q$ be the split and let $R$ be the split-in-place of a poset
  $P$.  Then $\ldim(P)\le\ldim(R)\le 2\ldim(Q)-1$ and
  $\ldim(Q)\le \ldim(R)\le 2+\ldim(P)$.
\end{lemma}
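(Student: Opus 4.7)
The plan is to establish the four inequalities of the lemma in turn. The first and third, $\ldim(P)\le\ldim(R)$ and $\ldim(Q)\le\ldim(R)$, will follow immediately from the monotonicity of local dimension on induced subposets: from the definition of the split-in-place, $P$ is the induced subposet of $R$ on $X$ and $Q$ is the induced subposet of $R$ on $X'\cup X''$.

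To prove $\ldim(R)\le 2+\ldim(P)$, I will start from a local realizer $\cgM=\{M_1,\ldots,M_t\}$ of $P$ with $\mu(\cgM)=\ldim(P)$ and fix a linear extension $M$ of $P$. For each $i$, I will form the ple $\tilde M_i$ of $R$ by replacing each $x\in M_i$ with the block $x'<x<x''$; a direct check against the defining relations of $R$ shows $\tilde M_i$ is a valid ple. I will then add two ples $N_1,N_2$ of $R$ with ground set $X'\cup X\cup X''$ and layered structure $X'<X<X''$: in $N_1$, the middle layer $X$ uses $M$-order (forced by the ple condition), and $X'$, $X''$ use the dual order $M^*$; in $N_2$, all three layers use $M$-order. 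Both $N_1$ and $N_2$ are valid ples because the layered structure is consistent with every comparability in $R$ and $X'$, $X''$ are antichains. The family $\{\tilde M_i\}\cup\{N_1,N_2\}$ will then cover every relation of $R$: the $\tilde M_i$'s realize $\Inc(P)$ and one direction of each incomparability of $R$ inherited from a comparability of $P$, while $N_1$ and $N_2$ supply the remaining directions via opposite orderings of $X'$ and $X''$ and via the $X'<X<X''$ layering that simultaneously reverses every $(x',y)$, $(x,y'')$ and $(x',y'')$ incomparability. Each element of $R$ will appear in at most $\ldim(P)$ of the $\tilde M_i$ and once in each of $N_1,N_2$, giving $\mu\le\ldim(P)+2$.

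To prove $\ldim(R)\le 2\ldim(Q)-1$, I will start from a local realizer $\cgN$ of $Q$ with $\mu(\cgN)=d=\ldim(Q)$ and fix a linear extension $M$ of $P$. For each $N\in\cgN$, I will construct a single ple $\hat N$ of $R$ with ground set
\[
 (N\cap X')\cup(N\cap X'')\cup\{x\in X:x'\in N\text{ or }x''\in N\}.
\]
The ple $\hat N$ will inherit $N$'s order on $X'\cup X''$, and the $X$-elements will be inserted in $M$-order, each at a latest valid position: after every $y'\in N$ with $y\le_P x$ and every previously inserted $x_j\le_P x$, and before every $z''\in N$ with $x\le_P z$. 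The insertion window will be nonempty because $N$ being a ple of $Q$ forces $y'<z''$ in $N$ whenever $y\le_P z$. The family $\cgL=\{\hat N:N\in\cgN\}$ will then be a local realizer of $R$: comparabilities and incomparabilities within $X'\cup X''$ are inherited directly from $\cgN$, while those involving $X$-elements are recovered by tracing the insertion positions back to $N$'s structure (for instance, if $N$ witnesses $x'>y''$, then in $\hat N$ the inserted $x$ sits after $x'$ and the inserted $y$ before $y''$, yielding $y<x$). Each $u\in X'\cup X''$ satisfies $\mu(u,\cgL)\le\mu(u,\cgN)\le d$, and each $x\in X$ lies in $\hat N$ iff $x'\in N$ or $x''\in N$, so
\[
 \mu(x,\cgL)\le\mu(x',\cgN)+\mu(x'',\cgN)-|\{N\in\cgN:x',x''\in N\}|\le 2d-1,
\]
where the final inequality uses the local realizer condition applied to $x'<x''$ in $Q$ to guarantee that the overlap is at least one. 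The main obstacle will lie in this second construction: since the antichain $X'$ has an arbitrary order within each $N$ while the inserted $X$-elements must respect $P$, verifying that $\hat N$ is a valid ple of $R$ and that every incomparability of $R$ involving an $X$-element is actually reversed in some $\hat N$ requires a careful case analysis relating positions in $\hat N$ back to positions in $N$.
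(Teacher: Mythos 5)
Your proposal is correct and follows essentially the same route as the paper: monotonicity for the two lower bounds, sandwiching each $x$ as $x'<x<x''$ inside the ples of a realizer of $P$ plus two layered linear extensions $X'<X<X''$ (with dual orderings on the antichains $X'$ and $X''$) for $\ldim(R)\le 2+\ldim(P)$, and inserting the $X$-elements into the ples of a realizer of $Q$ with the overlap forced by $x'<x''$ saving one occurrence for $\ldim(R)\le 2\ldim(Q)-1$. The only difference is presentational: you specify the insertion rule for the $X$-elements more explicitly than the paper, which simply asserts that a suitable ple exists and leaves the verification to the reader.
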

\begin{proof}
  The inequalities $\ldim(P)\le\ldim(R)$ and $\ldim(Q)\le\ldim(R)$
  hold since both $P$ and $Q$ are subposets of $R$.

  Setting $s=\ldim(Q)$, we show that $\ldim(R)\le 2s-1$.  Let
  $\cgL=\{L_1,L_2,\dots,L_t\}$ be a local realizer of $Q$ with
  $\mu(u,\cgL)= s$ for every $u\in Q$. Recall that the ground set of
  $R$ is $X\cup X'\cup X''$, where $X$ is the ground set of $P$. For
  each $i\in[t]$, let $X'_i$ consist of those elements $x'\in X'$
  which are in $L_i$ and let $X''_i$ consist of those elements
  $x''\in X''$ which are in $L_i$. Then let $X_i$ consist of those
  elements $x\in X$ for which either $x'\in X'_i$ or $x''\in X''_i$.

  For the poset $R$, let $M_i$ be a $\ple$ whose ground set is
  $X_i\cup X'_i\cup X''_i$ such that the restriction of $M_i$ to
  $X'_i\cup X''_i$ is $L_i$. Checking the necessary details it can be
  seen that $\cgM=\{M_1,M_2,\dots,M_t\}$ is a local realizer of $R$,
  with $\mu(R,\cgM)\le 2s$. However, since for each $x\in P$ there is
  some $i$ with $x'<x''$ in $L_{i}$ it follows that
  $\mu(R,\cgM)\leq 2s-1$.

  Next, we show that $\ldim(R)\le 2+\ldim(P)$.  Let $d=\ldim(P)$ and
  let $\cgL=\{L_1,L_2,\dots,L_t\}$ be a local realizer of $P$ with
  $\mu(x,\cgL)= d$ for all $x\in P$.  For each $i\in[t]$, let $X_i$ be
  the ground set of the $\ple$ $L_i$. We then modify $L_i$ as follows:
  for each $x\in X_i$, we add $x'$ immediately under $x$ and we add
  $x''$ immediately over $x$.

  It remains to witness the incomparabilities $(a,b)$ where both $a$
  and $b$ are in $X'$ or both are in in $X''$. Construct two linear
  extensions of $R$ as follows. Let $L$ be a linear extension of
  $P$. Take $M_{0}$ to be the linear extension of $R$ with block
  structure $X'<X<X''$, where the restriction to each of $X,X'$ and
  $X''$ is ordered according to the corresponding elements in
  $L$. Similarly, define $M_{0}'$ to be the linear extension of $R$
  with block structure $\overline{X}'<X<\overline{X}'',$ where
  $\overline{X}'$ and $\overline{X}''$ are ordered dually to $L$, and
  $X$ is ordered according to $L$. Now, we can see that
  $\cgM=\{M_0,M'_0\}\cup\{M_i:i\in[t]\}$ is a local realizer for $R$
  with $\mu(R,\cgM) \le 2+d$.
\end{proof}

For the remainder of the proof, we fix a positive integer $t$ and let
$P$ be a poset whose cover graph has path-width at most~$t$.  We will
then show that $\ldim(P)$ is $O(5^{(t+1)^2})$.  Let $Q$ be the split
of $P$, and let $R$ be the split-in-place of $P$.  The basic idea for
the remainder of the argument is to prove that
the path-width of
the cover graph of $R$ is $t+1$. We will use this to show that the local dimension of
$Q$ is bounded in terms of $t$.  The conclusion of our theorem will
then follow from Lemma~\ref{lem:ldim-split}.

We let $\bbP_n$ denote the path whose vertex set is $[n]$ with
vertices $i$ and $j$ from $[n]$ adjacent in $\bbP_n$ if and only if
$|i-j|=1$.

Then let $G$ be the cover graph of $P$ and let $H$ be the cover graph
of $R$. 

Let $\cgP_{G}=(\bbP_{m},\{B_{t}:t\in[m]\})$ be a path-decomposition of
$G$ of width $t$. If $u\in P$ we define
$a_{\cgP_{G}}(u)=\min\{t\in [m]:u\in B_{t}\}$ and
$b_{\cgP_{G}}(u)=\max\{t\in [m]:u\in B_{t}\}$. Since $\cgP_{G}$ is a
path-decomposition, it follows that $u\in B_{t}$ if and only if
$a_{\cgP_{G}}(u)\leq t\leq b_{\cgP_{G}}(u)$. Thus we define the
\emph{interval} of $u$ in $\cgP_{G}$ to be the set of consecutive
integers
$\Int_{\cgP_{G}}(u)=\{t\in [m]: a_{\cgP_{G}}(u)\leq t \leq
b_{\cgP_{G}}(u)\}$. We may assume that the endpoints of the intervals
in $\cgP_{G}$ are distinct and every bag contains an endpoint, i.e.,
for each $i\in[m]$, there is exactly one vertex $u\in P$ with
$i\in\{a_{\cgP_{G}}(u),b_{\cgP_{G}}(u)\}$.

Let us fix the path-decomposition for $H$ to be
$\cgP=(\bbP_{3m},\{B'_{t}:t\in[3m]\})$, where
\[B'_{3i-j}=
  \begin{cases}
    B_{i}\cup\{u''\}\quad&\text{if }j=0\text{ and }i=a_{\cgP_{G}}(u)\text{ for some }
    u\in P,\\
    B_{i}\cup\{u'\}\quad&\text{if }j=1\text{ and }i=a_{\cgP_{G}}(u)\text{ for some }
    u\in P,\\
    B_{i}\quad&\text{if }j=2\text{ and }i=a_{\cgP_{G}}(u)\text{ for some }
    u\in P,\\
    
    B_{i}\quad&\text{if }j\in\{0,1,2\}\text{ and }i\neq
    a_{\cgP_{G}(u)} \text{ for all } u\in P.
  \end{cases}
\]
Since the path-decomposition of $H$ is now fixed we let $n=3m,$ and we
adopt similar notation for intervals in $\cgP$ as done above. That is,
if $u\in R$, then $a_{u}=\min\{t\in [n]:u\in B'_{t}\},$
$ b_{u}=\max\{t\in [n]:u\in B'_{t}\},$ and
$\Int(u)=\{t\in[n]:a_{u}\leq t\leq b_{u}\}$. Note that $\cgP$ has
width $t+1$ and that it satisfies the following properties:
\begin{enumerate}
\item The endpoints of the intervals in $\cgP$ are distinct, i.e., for
  each $i\in[n]$, there is at most one vertex $u\in R$ with
  $i\in\{a_{u},b_{u}\}$.
\item For every $u\in P$, $b_{u'}<a_{u''}$.
\item For every pair $(u,v)$ of (not necessarily distinct) elements of
  $P$, if $\Int(v)$ intersects either of $\Int(u')$ and $\Int(u'')$,
  then it contains both of them.
\end{enumerate}

We now begin to use properties of $\cgP$ to build a local realizer
$\cgL$ for $Q$.  To avoid a proliferation of primes and double primes
in the presentation, we will adopt the following conventions: the
letter $x$, sometimes written with subscripts, will always denote a
minimal element of $Q$.  Dually, the letter $y$ will always denote a
maximal element of $Q$. Also, we take $X$ as the set of all minimal
elements of $Q$ while $Y$ is the set of all maximal elements of
$Q$. We let $\Inc(X,Y)$ denote the set of all pairs
$(x,y)\in X\times Y$ with $(x,y)\in\Inc(Q)$.
 
We begin by including two linear extensions $L_0$ and $L'_0$ in $\cgL$
such that for all $(x,y)\in X\times Y$, $x<y$ in both $L_0$ and
$L'_0$, in which:
\begin{enumerate}
\item the restriction of $L_0$ to $X$ is the dual of the restriction
  of $L'_0$ to $X$, and
\item the restriction of $L_0$ to $Y$ is the dual of the restriction
  of $L'_0$ to $Y$.
\end{enumerate}
Given a set of ple's $\cgL'$ which satisfy the following condition,
then $\cgL=\cgL'\cup\{L_{0},L_{0}'\}$ is a local realizer for $Q$.

\medskip
\noindent\textbf{Reversing Min-Max Pairs.}
For each pair $(x,y)\in\Inc(X,Y)$, there is some $L\in \cgL'$ with
$x>y$ in $L$.

\medskip Of course, we must take care to keep $\mu(z,\cgL)$ bounded in
terms of $t$ for all $z\in Q$. We begin by taking $\varphi$ as a
proper coloring of the graph $G$ in the sense that for each pair of
distinct vertices $u,v\in P$ we have $\varphi(u)\neq\varphi(v)$ when
$\Int(u)\cap\Int(v)\neq\emptyset$. Let us see why such a coloring exists
using $t+1$ colors. Let $\cgP'$ be the path-decomposition of $G$
resulting from restricting the bags of $\cgP$ to only contain elements
of $G$. Consider the ordering of $V(G)$ given by the left endpoints of
intervals in $\cgP'$. We may greedily color vertices of $G$ according
to this ordering. Since $\cgP'$ witnesses that
$\pw(G)\leq t$, the resulting coloring does not use more
than $t+1$ colors. Without loss of generality, we may assume that $\varphi$
uses the integers in $[t+1]$ as colors.

Second, for each $z\in X\cup Y$, we let $\st(z)$ denote the set of all
points $u\in P$ such that $\Int(z)\subseteq \Int(u)$. Note that
$|\st(z)|\le t+1$. We then define a coloring $\pi$ of the elements of
$X\cup Y$.  The colors used by $\pi$ are vectors of length $t+1$ and
the coordinates are taken from $\{0,1,2,3\}$, so $\pi$ uses $4^{t+1}$
colors. Below $\pi(z)(i)$ is the $i$-th coordinate in the
$(t+1)$-tuple of $\pi(z)$. Note that there is at most one
$u\in\st(z)$ with $\varphi(u)=i$, because $\varphi$ is a proper
coloring (by definition of $\varphi$). For each $z\in X\cup Y$ and
each $i\in[t+1]$, we set:
\begin{enumerate}  \setcounter{enumi}{-1}
\item $\pi(z)(i)=0$ if there is no element $u\in\st(z)$ with
  $\varphi(u)=i$.
\item $\pi(z)(i)=1$ if there is an element $u\in\st(z)$ with
  $\varphi(u)=i$ and $z<u$ in $R$.
\item $\pi(z)(i)=2$ if there is an element $u\in\st(z)$ with
  $\varphi(u)=i$ and $z>u$ in $R$.
\item $\pi(z)(i)=3$ if there is an element $u\in\st(z)$ with
  $\varphi(u)=i$ and $z\parallel u$ in $R$.
\end{enumerate}

Next, we define a coloring $\tau$ of ordered pairs of elements from
$X\cup Y$. The colors used by $\tau$ are $(t+1)\times(t+1)$ matrices
with all entries taken from $\{0,1,2,3,4\}$, so $\tau$ uses
$5^{(t+1)^2}$ colors.  For each pair $(z,w)$ of elements of $X\cup Y$
and each pair $(i,j)\in [t+1]^{2}$, we set:
\begin{enumerate}  \setcounter{enumi}{-1}
\item $\tau(z,w)(i,j)=0$ if there is no pair $(u,w)$ with
  $u\in\st(z)$, $v\in\st(w)$, $\varphi(u)=i$ and $\varphi(v)=j$.
\item $\tau(z,w)(i,j)=1$ if there is a pair $(u,w)$ with $u\in\st(z)$,
  $v\in\st(w)$, $\varphi(u)=i$, $\varphi(v)=j$ and $u<v$ in $P$.
\item $\tau(z,w)(i,j)=2$ if there is a pair $(u,w)$ with $u\in\st(z)$,
  $v\in\st(w)$, $\varphi(u)=i$, $\varphi(v)=j$ and $u>v$ in $P$.
\item $\tau(z,w)(i,j)=3$ if there is a pair $(u,w)$ with $u\in\st(z)$,
  $v\in\st(w)$, $\varphi(u)=i$, $\varphi(v)=j$ and $u\parallel v$ in
  $P$.
\item $\tau(z,w)(i,j)=4$ if there is a pair $(u,w)$ with $u\in\st(z)$,
  $v\in\st(w)$, $\varphi(u)=i$, $\varphi(v)=j$ and $u=v$ in $P$.   
\end{enumerate}

Let $x,y\in R$. We say \emph{$x$ is left of $y$} if and only if
$b_{x}<a_{y}$. Under the same conditions, we say \emph{$y$ is right of
  $x$}. Now define a coloring $\sigma$ of the pairs in $\Inc(X,Y)$
using $4$-tuples of the form $(\alpha_1,\alpha_2,\alpha_3,\alpha_4)$.
The first coordinate $\alpha_1$ is $0$ if $x$ is left of $y$ and $1$
if $x$ is right of $y$. The remaining three coordinates are defined by
setting $\alpha_2=\pi(x)$, $\alpha_3=\pi(y)$ and
$\alpha_4=\tau(x,y)$. Clearly, $\sigma$ uses
$2\cdot4^{2(t+1)}\cdot 5^{(t+1)^2}$ colors.

Since the number of colors used by $\sigma$ is bounded in terms of
$t$, to complete the proof, it suffices to show that for each color
$\Gamma=(\alpha_1,\alpha_2,\alpha_3,\alpha_4)$ used by $\sigma$, we
can determine a family $\cgL(\Gamma)$ of $\ple$'s so that (1)~For each
$(x,y)\in\Inc(X,Y)$ with $\sigma(x,y)=\Gamma$, there is some
$L\in\cgL(\Gamma)$ with $x>y$ in $L$; and (2)~For each $z\in X\cup Y$,
$\mu(z,\cgL(\Gamma))$ is bounded in terms of $t$.

Fix a color $\Gamma=(\alpha_1,\alpha_2,\alpha_3, \alpha_4)$ and
consider the subset $S(\Gamma)$ of $\Inc(X,Y)$ consisting of all pairs
$(x,y)\in\Inc(X,Y)$ with $\sigma(x,y)=\Gamma$.  We will assume that
$\alpha_1=0$, i.e., if $(x,y)\in S(\Gamma)$ then all our pairs will
have $x$ left of $y$. From the details of the argument, it will be
clear that the case $\alpha_1=1$ is symmetric. Of course, we will also
assume that the set $S(\Gamma)$ is non-empty.

The next part of the proof will involve four claims. We begin by
proving the following.

\noindent
\textbf{Claim 1.}  Let $x_1,x_2\in X$ and $y_1,y_2\in Y$. Then the
following two statements hold:

\begin{enumerate}
\item If $(x_1,y_1)$ and $(x_2,y_1)$ are in $S(\Gamma)$, and $y_1$ is
  left of $y_2$, then $\tau(x_1,y_2)=\tau(x_2,y_2)$.  In particular,
  $(x_1,y_2)\in S(\Gamma)$ if and only if $(x_2,y_2)\in S(\Gamma)$.
\item If $(x_2,y_1)$ and $(x_2,y_2)$ are in $S(\Gamma)$, and $x_1$ is
  left of $x_2$, then $\tau(x_1,y_1)=\tau(x_1,y_2)$.  In particular,
  $(x_1,y_1)\in S(\Gamma)$ if and only if $(x_1,y_2)\in S(\Gamma)$.
\end{enumerate}
\begin{proof}
  Let us prove~(1), noting that the proof for~(2) follows from a
  similar argument. Suppose $\tau(x_{1},y_{2})(i,j)=k$ for some
  $(i,j)\in[t+1]^{2}$ and $k\in\{0,1,2,3,4\}$. We show that
  $\tau(x_{2},y_{2})(i,j)=k$. This results in the following five cases

  \begin{itemize}
  \item Assume that $k=0$. If $\pi(y_{2})(j)=0$, then
    $\tau(x_{2},y_{2})=0$. However, if $\pi(y_{2})(j)\neq 0$ then
    $\pi(x_{1})(i)=0$. Since $\sigma(x_{1},y_{1})=\sigma(x_{2},y_{1})$
    this implies in particular that $\pi(x_{1})(i)=\pi(x_{2})(i)$. So
    $\pi(x_{2})(i)=0$ and therefore $\tau(x_{2},y_{2})=0$.
    
  \item If $k=4$, then $i=j$ and there exists
    $u\in \st(x_{1})\cap\st(y_{2})$. Since $x_{1}$ is left of $y_{1}$
    and $y_{1}$ is left of $y_{2}$, then $u\in \st(y_{1})$. Therefore
    $\tau(x_{1},y_{1})(i,i)=4$. This implies
    $\tau(x_{2},y_{1})(i,i)=4$, as
    $\sigma(x_{1},y_{1})=\sigma(x_{2},y_{1})$. Thus $u\in\st(x_{2})$
    and it follows that $\tau(x_{2},y_{2})(i,i)=4$.

  \item For the case where $k=1$, there is $u\in\st(x_{1})$ and
    $v\in\st(y_{2})$ such that $\varphi(u)=i$, $\varphi(v)=j,$ and
    $u<v$ in $P$. Let $u=u_{1}u_{2}\ldots u_{m}=v$ be a path in $G$
    that witnesses the comparability $u<v$. It follows that
    $u_{l}\in\st(y_{1})$ for some $l\in[m]$. Suppose
    $\varphi(u_{l})=j'$. Let us assume that
    $\tau(x_{1},y_{1})(i,j')=1$, noting that the case where
    $\tau(x_{1},y_{1})(i,j')=4$ follows from an analogous
    argument. Since $\tau(x_{1},y_{1})(i,j')=1$, this is witnessed by
    $u\in \st(x_{1})$ and $u_{l}\in \st(y_{1})$. Since
    $\sigma(x_{1},y_{1})=\sigma(x_{2},y_{1})$ and
    $\tau(x_{1},y_{1})(i,j')=1$, we conclude that
    $\tau(x_{2},y_{1})(i,j')=1$. Therefore there is $u'\in\st(x_{2})$
    with $\varphi(u')=i$ and $u'<u_{l}$. We now have $u'<v$ and
    therefore $\tau(x_{2},y_{2})(i,j)=1$.
  \item The case where $k=2$ follows from an argument analogous to the
    one for $k=1$.
  \item We have shown that $\tau(x_{1},y_{2})\neq 3$ if and only if
    $\tau(x_{2},y_{2})\neq 3.$ Thus the result holds when $k=3$.
  \end{itemize}

\end{proof}

\noindent
\textbf{Claim 2.}  Let $S\subseteq S(\Gamma)$. Then the following two
statements hold:

\begin{enumerate}
\item If there is some $z\in X$ such that $(z,y)\in S(\Gamma)$
  whenever $(x,y)\in S$, then the set $S$ is reversible.
\item If there is some $w\in Y$ such that $(x,w)\in S(\Gamma)$
  whenever $(x,y)\in S$, then the set $S$ is reversible.
\end{enumerate}
\begin{proof}
  We prove the first statement and note that the proof for~(2) is
  symmetric. We argue by contradiction and assume there is some
  $k\ge2$ for which there is a strict alternating cycle\
  $\mathcal{S}=\{(x_i,y_i):i\in [k]\}$ contained in $S$.
  Without loss of generality, we may assume that the pairs of this
  cycle have been labeled so that $y_1$ is left of $y_i$ for all
  $i\in\{2,3,\dots,k\}$. Note that $(x_{i},y_{i})\in S$ and
  $x_{i}\parallel y_{i}$ in $R$ for all $i\in[k]$. Since
  $(z,y_{i})\in S(\Gamma),$ it follows that $z\parallel y_{i}$ for all
  $i\in[k]$. Because $(x_1,y_1)$ and $(z,y_1)$ are in $S(\Gamma)$ with $y_1$ left of $y_2$, Claim~1 guarantees $(x_{1},y_{2})\in S(\Gamma)$ since $(z,y_2)\in S(\Gamma)$,
  thus $x_{1}\parallel y_{2}$.  This is not possible
  since $x_{1}<y_{2}$ in $\mathcal{S}$.
\end{proof}

We consider the pairs in $S(\Gamma)$ as edges in a bipartite graph
$G(\Gamma)$ whose vertex set is $X\cup Y$ with vertex $x\in X$
adjacent to vertex $y\in Y$ in $G(\Gamma)$ when $(x,y)\in S(\Gamma)$.
In general, the graph $G(\Gamma)$ may be disconnected and some of the
components may just be isolated vertices. Regardless, since a vertex
from $X\cup Y$ belongs to at most one component of $G(\Gamma)$, it is
enough to consider a subset of $S(\Gamma)$ consisting of pairs
determining a non-trivial component of $G(\Gamma)$. Let $C$ be such a
component, let $S_C$ be the edge set of $C$, and let $X_C$ and $Y_C$
be, respectively, the subsets of $X$ and $Y$ which are incident with
at least one edge in $S_C$. Also, let $x_0$ be the left-most element
of $X_C$.

Then using the graph-theoretic concept of distance in a connected
graph, for each edge $(x,y)\in S_C$, we define
$\rho(x,y)=\min\{\dist(x,x_{0}),\dist(y,x_{0})\}$ to be the distance
from the edge $(x,y)$ to the vertex $x_0$. For each non-negative
integer $s$, we let $S_C(s)$ denote the set of edges $(x,y)\in S_C$
with $\rho(x,y)=s$. Note that $S_C(0)$ is just the set of edges
$(x_0,y)\in S_C$ where $y\in Y_C$. The set $X_C(s)$ consists of all
vertices $x\in X_C$ incident with an edge in $S_C(s)$. The set
$Y_C(s)$ is defined analogously. It is obvious that for each
$x\in X_C$, there are at most two values of $s$ for which
$x\in X_C(s)$. Furthermore, if there are two values, then they are
consecutive integers and the smaller of the two is odd.  Similarly, if
$y\in Y_C$, there are at most two values of $s$ for which
$y\in Y_C(s)$. If there are two values, they are consecutive integers
and the smaller of the two is even.

\medskip
\noindent
\textbf{Claim 3.}  The following two statements hold:
\begin{enumerate}
\item If $s$ is a non-negative even integer, $(x_1,y)\in S_C(s)$ and
  $(x_2,y)\in S_C(s+1)$, then $x_1$ is left of $x_2$.
\item If $s$ is an odd positive integer, $(x,y_1)\in S_C(s)$ and
  $(x,y_2)\in S_C(s+1)$, then $y_1$ is right of $y_2$.
\end{enumerate}
\begin{proof}
  First, suppose that $s=0$. Then since $(x_1,y)\in S_C(0)$, we know
  that $x_{1}=x_0$ and therefore $x_{1}$ is left of $x_{2}$ since
  $x_{0}$ was chosen to be the left-most element of $X_{C}$. Now, we
  argue by contradiction. Let $s$ be the least positive integer for
  which one of the two statements of the claim fails.

  If $s$ is a positive even integer and the claim fails for the pairs
  $(x_1,y)\in S_{C}(s)$ and $(x_2,y)\in S_{C}(s+1)$, then $x_2$ is
  left of $x_1$.  Let $(x_1,y_1)$ be any edge in $S_C$ so that
  $\rho(x_1,y_1)=s-1$.  Since the claim holds for $s-1$, we know that
  $y_1$ is right of $y$.  By Claim~1, we conclude that
  $(x_2,y_1)\in S_C$, so $\rho(x_2,y_1)\le s-1$ because $y_{1}$ is at
  distance $s-1$ from $x_{0}$. Therefore $\rho(x_{2},y)\leq s$, which
  contradicts the fact that $\rho(x_{2},y)\in S_{C}(s+1)$. Therefore
  $s$ is not a positive even integer.

  A similar contradiction is reached when $s$ is a positive odd
  integer, and with this observation, the proof of the claim is
  complete.
\end{proof}

Accordingly, to complete the proof of our theorem, it is enough to
show that for each non-negative integer $s$ and component $C$, there
is a family $\cgL_C(s)$ of ple's with ground set
$X_{C}(S)\cup Y_{C}(S)$ so that
\begin{enumerate}
\item for each $(x,y)\in S_C(s)$, there is some $L\in\cgL_C(s)$ with
  $x>y$ in $L$; and
\item $\mu(x,\cgL_C(s))$ and $\mu(y,\cgL_C(s))$ are bounded in terms
  of $t$, for every $x\in X_C(s)$ and every $y\in Y_C(s)$.
\end{enumerate}

The case $s=0$ is easy since all the edges in $S_C(0)$ are of the form
$(x_0,y)$, and clearly the set of such pairs is reversible. Similarly,
the case $s=1$ is handled by Claim~2, since it asserts that the set
$S_C(1)$ is reversible.

Now we fix an integer $s\ge2$. Suppose first that $s$ is even.  For
each $x\in X_C(s)$, there is a unique right-most point $w\in Y_C$ with
$(x,w)\in S_C(s-1)$.  We call $w$ the \textit{right-parent} of $x$.
For each $w\in Y_C$, we then let $X_C(s,w)$ denote those elements
$x\in X_C(s)$ for which $w$ is the right-parent of $x$. Clearly, when
$w_1\neq w_2$, the sets $X_C(s,w_1)$ and $X_C(s,w_2)$ are disjoint.

For a vertex $w\in Y_C$ for which $X_C(s,w)\neq\emptyset$, we then let
$Y_C(s,w)$ denote the set of all $y\in Y_C(s)$ for which there is some
$x\in X_C(s,w)$ with $(x,y)\in S_C(s)$.

\medskip
\noindent
\textbf{Claim 4.} If $w_1$ and $w_2$ are distinct elements of $Y_C$,
then $Y_C(s,w_1)\cap Y_C(s,w_2)=\emptyset$.
\begin{proof}
  Suppose to the contrary that there is some
  $y\in Y_C(s,w_1)\cap Y_C(s,w_2)$.  Choose elements
  $x_1\in X_C(s,w_1)$ and $x_2\in X_C(s,w_2)$ such that
  $(x_1,y),(x_2,y)\in S_C(s)$.  Without loss of generality, we may
  assume that $w_1$ is left of $w_2$. 
  Since $(x_2,w_2)\in S_C(s-1)$ and $(x_2,y)\in S_C(s)$ where $s$ is even, Claim 3 guarantees $y$ is left of $w_2$.
  Then by Claim~1, $(x_1,w_2)\in S_C$. Clearly, $\rho(x_1,w_2)$ is
  either $s-1$ or $s$. However, if $\rho(x_1,w_2)=s$, then the pairs
  $(x_1,w_1)$ and $(x_1,w_2)$ violate Claim~3. Also, if
  $\rho(x_1,w_2)= s-1$, $w_1$ is not the right parent of $x_1$.  The
  contradiction completes the proof of the claim.
\end{proof}

For each $w\in Y_C$ for which $X_C(s,w)\cup Y_C(s,w)$ is non-empty, we
form a $\ple$ $L_C(s,w)$ whose ground set is $X_C(s,w)\cup Y_C(s,w)$.
In view of Claim~2, we may assume that $x>y$ in $L_C(s,w)$ for every
pair $(x,y)\in S_C(s)$ with $x\in X_C(s,w)$. In view of Claim~4, for
each $z\in X_C\cup Y_C$, there is at most one element $w\in Y_C$ for
which $z$ is in the ground set of $L_{C}(s,w)$.

The proof when $s$ is a positive odd integer is similar, except we use
the obvious notion of a left-parent rather than a right-parent.

Finally, observe that each element of $Q$ appears at most twice when
reversing the incomparable pairs in $S(\Gamma)$. Recall that
$\cgL=\cgL'\cup\{L_{0},L_{0}'\}$, therefore
\[\ldim(Q)\leq \mu(Q,\cgL) \leq 2\cdot 2\cdot4^{2(t+1)}\cdot
5^{(t+1)^2}.
\] 
Now Lemma~\ref{lem:ldim-split} implies that
\[\ldim(P) \leq 2\cdot2\cdot 2\cdot4^{2(t+1)}\cdot 5^{(t+1)^2}+3.\]
Thus $\ldim(P)$ is $O(5^{(t+1)^2})$, as desired.

\subsection{Local Dimension and Tree-Width}\label{subsec:tw}

We now turn to proving Theorem~\ref{thm:ldim-tw}.  Recall that our
goal is to prove that for every $d\ge1$, there exists a poset $P$ such
that $\ldim(P)>d$ and the tree-width of the cover graph of $P$ is at
most~$3$.

Our argument will require some additional Ramsey theoretic tools.  The
results we use in the proof of Theorem~\ref{thm:ldim-tw} are treated
in a more comprehensive manner by
Milliken~\cite{bib:Mill}\footnote{The particular result we need is
  Theorem~2.1 on page 220. Note that Milliken credits the result to
  Halpern, L\"{a}uchi, Laver and Pincus and comments on the history of
  the result.}. However we will find it convenient to use somewhat
different notation and terminology.

For a positive integer $n$, we view the \textit{complete binary tree
  $T_n$} as the poset whose elements are the binary strings of length
at most~$n$, with $x\le y$ in $T_n$ when $x$ is a initial segment in
$y$.  The empty string, denoted $\emptyset$, is then the zero (least
element) of $T_n$. For all $n\ge1$, $T_n$ has $2^{n+1}-1$ elements,
$2^n$ leaves and height~$n+1$.  By convention, we take $T_0$ as the
one-point poset whose only element is the empty string.

When $n\ge1$ and $x$ is a binary string of length~$n$, we will denote
coordinate $i$ of $x$ as $x(i)$ and when a string is of modest length,
we may write it explicitly, e.g., $x=01001101$.  Let $x$ be a string
of length $p$, $y$ be a string of length~$m$ and $x<y$ in $T_n$. We
say $y$ is in the \textit{left tree above $x$} when
$y(p+1)=0$. Similarly, $y$ is in the \textit{right tree above $x$}
when $y(p+1)=1$.

Let $n$ and $m$ be integers with $n\ge m\ge0$, and let $\Lambda$ be a subposet of $T_n$. We will say $\Lambda$ is a \textit{strong copy of
  $T_m$} when there is a function $f:T_m\rightarrow \Lambda$ satisfying the
following two requirements:

\begin{enumerate}
\item $f$ is a poset isomorphism, i.e., $f$ is a bijection and for all
  $x,y\in T_m$, $x\le y$ in $T_m$ if and only if $f(x)\le f(y)$ in
  $\Lambda$.
\item For all $x,y\in T_m$ with $x<y$ in $T_m$, $y$ is in the left tree
  above $x$ in $T_m$ if and only if $f(y)$ is in the left tree above
  $f(x)$ in $T_n$.
\end{enumerate}

The following result is the special case of Theorem~2.1
from~\cite{bib:Mill} for binary trees and has also been used
in~\cite{bib:BFMMSTT}.  In fact, the application here preceded and
motivated the work in~\cite{bib:BFMMSTT}.

\begin{theorem}\label{thm:ramsey-bt}
  For every triple $(m,p,r)$ of positive integers, with $p\ge m$,
  there is a least positive integer $n_0=\BTRam(m,p,r)$ so that if
  $n\ge n_0$ and $\varphi$ is an $r$-coloring of the strong copies of
  $T_m$ in $T_n$, then there is a color $\alpha\in[r]$ and a subposet
  $\Lambda$ of $T_n$ such that $\Lambda$ is a strong copy of $T_p$ and
  $\varphi$ assigns color $\alpha$ to every strong copy of $T_m$
  contained in $\Lambda$.
\end{theorem}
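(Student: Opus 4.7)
The plan is to prove Theorem~\ref{thm:ramsey-bt} by induction on $m$, with the Halpern--L\"auchi partition theorem as the principal combinatorial input. Halpern--L\"auchi, in the binary-tree form we need, states: for every $p,r\ge 1$ there is an integer $N(p,r)$ so that any $r$-coloring of the leaves of $T_N$ admits a strong copy of $T_p$ whose leaves are monochromatic; a product variant, coloring tuples of leaves drawn from several binary trees simultaneously with a common-height constraint, is equivalent via a routine coding. The finite version of both statements follows from the infinite (set-theoretic) version by compactness, so we may treat them as given.

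For the base case $m=0$, strong copies of $T_0$ are just single vertices, so $\varphi$ is a vertex coloring of $T_n$. I would iterate Halpern--L\"auchi from the top down, peeling one layer at a time: first extract a strong subcopy of a suitably tall $T_{p_1}$ whose leaves all share one color; then consider $\varphi$ restricted to the new leaves of this subcopy (the former penultimate level) and refine again; and so on. After $p+1$ such refinements, provided $n$ was chosen sufficiently large to begin with, the surviving strong copy of $T_p$ has every vertex receiving a single common color, settling the case $m=0$.

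For the inductive step, assume the statement holds for $m-1$. A strong copy of $T_m$ in $T_n$ is determined by a strong copy $\Delta$ of $T_{m-1}$ together with, for each leaf $\lambda$ of $\Delta$, one descendant in the left subtree above $\lambda$ and one in the right subtree above $\lambda$. Fix $\Delta$; varying the extensions gives a family on which the restriction of $\varphi$ is an $r$-coloring of tuples indexed by (leaf of $\Delta$, side). Applying the product form of Halpern--L\"auchi to the $2^{m+1}$ subtrees hanging above the leaves of $\Delta$ produces strong subcopies of these subtrees on which the restriction of $\varphi$ depends only on $\Delta$. Performing this refinement uniformly over all $\Delta$ (possible by taking the starting height sufficiently large) induces a well-defined $r$-coloring $\widetilde\varphi$ of strong copies of $T_{m-1}$; applying the inductive hypothesis to $\widetilde\varphi$ produces the required monochromatic strong copy of $T_p$ for $\varphi$.

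The main obstacle is quantitative bookkeeping: at each step the ambient tree must be tall enough that Halpern--L\"auchi and the inductive hypothesis can simultaneously be invoked with the sizes they require, and each refinement must preserve the rigid left/right alignment that defines a strong copy. Because the inductive hypothesis is applied to $\widetilde\varphi$ only after HL has produced ``canonical'' extensions, the resulting bound on $\BTRam(m,p,r)$ grows as a tower function in $m$; but finiteness, which is all that is needed in the application here, is guaranteed.
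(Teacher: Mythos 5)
First, a point of orientation: the paper offers no proof of this theorem at all --- it is quoted as a special case of Milliken's tree theorem (Theorem~2.1 of the cited paper of Milliken) --- so you are reconstructing an argument the authors deliberately outsource. Your skeleton, induction on $m$ with the Halpern--L\"auchli theorem as the combinatorial engine, is indeed the standard route to Milliken's theorem. The difficulty is that, as written, your inductive step proves at best the \emph{level-aligned} version of the result, whereas the statement here concerns the paper's level-free notion of ``strong copy,'' and that discrepancy is exactly where the content of this particular formulation lies.

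Concretely: condition~(2) in the definition of a strong copy constrains only left/right branching, so the images of the two children of a node of $T_m$ may sit at completely different depths of $T_n$. In your inductive step you fix a strong copy $\Delta$ of $T_{m-1}$ and need $\varphi$ to become constant on \emph{all} extensions of $\Delta$, i.e.\ on the full Cartesian product of the shrunken subtrees above its leaves; only then is $\widetilde\varphi(\Delta)$ well defined. But the product Halpern--L\"auchli theorem you invoke --- which you yourself describe as carrying a ``common-height constraint'' --- yields constancy only on the level product, the tuples of nodes all at a common depth. The skew extensions are left uncontrolled, and they cannot in general be controlled by shrinking: take $m=1$ and color a strong copy $\{x,y,z\}$ (with $y$ in the left tree and $z$ in the right tree above $x$) by the truth value of $|y|<|z|$. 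Whatever strong subcopies of height at least~$1$ you retain above the two children of $x$, both colors are realized unless the two subcopies occupy disjoint, ordered ranges of levels --- precisely what a common level set forbids, and something that cannot be arranged coherently for every $\Delta$ simultaneously merely by ``taking the starting height sufficiently large.'' So $\widetilde\varphi$ is not well defined and the induction does not close. (The same coloring shows that the monochromatic $\Lambda$ must in general be a skew copy, so the level-aligned Milliken theorem does not directly imply the statement either.) The standard repair is to prove the level-aligned version first and then dispose of skew copies by an envelope argument: every strong copy of $T_m$ inside a level-aligned strong subtree of large height $M$ realizes one of boundedly many level patterns, one colors the aligned height-$(M+1)$ subtrees by the tuple of $\varphi$-values of the copies of each pattern they contain, and one applies the aligned theorem to that product coloring. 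Your uniformization ``over all $\Delta$'' also needs the usual level-by-level fusion rather than a mere height bound, and $\Delta$ has $2^{m-1}$ leaves, hence $2^m$ (not $2^{m+1}$) subtrees above them; but both points are secondary to the alignment gap.
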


We now turn our attention to a construction due to Joret, Micek and
Wiechert~\cite{bib:JoMiWi} which was used to show that a poset whose
cover graph has bounded tree-width can have dimension that grows
exponentially with the height of the poset.  Here is their
construction, with notation and terminology adjusted so that we can
conveniently apply Theorem~\ref{thm:ramsey-bt}.
 
For $n\ge0$, construct a poset $P_n$ as follows.  The ground set of
$P_n$ is the disjoint union $A_n\cup B_n$ where $B_n$ is an up set and
$A_{n}$ is a down set in $P_n$. The subposet $B_n$ is a copy of
$T_n$. For each string $x$ in $T_n$ of length at most $n$, we let
$b_x$ be the corresponding element of $B_n$, i.e., $b_x<b_y$ in $B_n$
if and only if $x$ is an initial segment of $y$. Note that
$b_\emptyset$ is the minimum of $B_n$.

The subposet $A_n$ is a copy of $T^*_n$, the dual of $T_n$. When
$x\in T_n$, we let $a_x$ be the corresponding element of $A_n$, i.e.,
$a_x>a_y$ in $A_n$ if and only if $x$ is an initial segment of
$y$. Note that $a_\emptyset$ is the maximum of $A_n$.

When $(a_x,b_y)\in A_n\times B_n$, we set $a_x<b_y$ if and only if
neither $x$ nor $y$ is an initial segment of the other.  For example,
$a_{1011}<b_{01}$, $a_{11}<b_{010}$, $a_{101}\parallel b_{10}$ and
$a_{101}\parallel b_{101}$ in $P_n$.

In Figure~\ref{fig:JMW} and Figure~\ref{fig:JMW2}, we show a
tree-decomposition of the cover graph of $P_1$ and $P_{2}$ that have
width~3 respectively. Observe that $P_1$ is isomorphic to the standard
example $S_3$. Also, note that there is a leaf of the host trees in
which the only two vertices of $P_1$ and $P_{2}$ which occur in this
bag are $a_\emptyset$ and $b_\emptyset$.
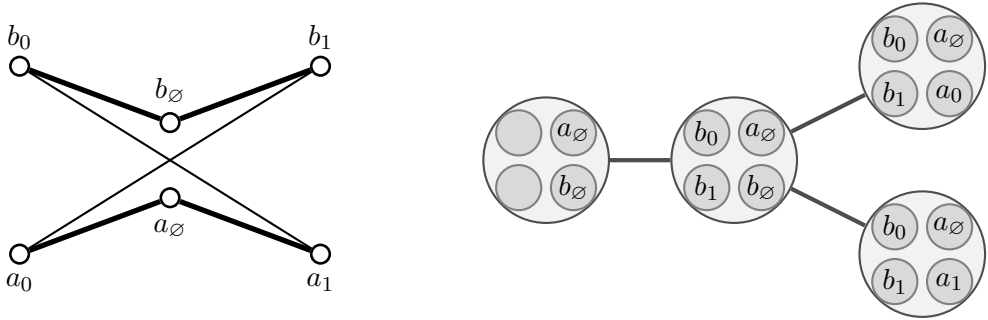
\begin{figure}
 \centering
 \begin{tikzpicture}

\begin{scope}[nodes={circle,fill=white,draw=black,line width=1pt,inner sep=0pt, minimum size=7pt}]

\begin{scope}[yscale=-1, yshift=0.5cm]
\node[label={below:$a_{\emptyset}$}] (a) at (0,0) {};
\node[label={below:$a_{1}$}] (a1) at (2,.75) {};
\node[label={below:$a_{0}$}] (a0) at (-2,.75) {};
\end{scope}
\begin{scope}[yshift=0.5cm]
\node[label={above:$b_{\emptyset}$}] (b) at (0,0) {};
\node[label={above:$b_{1}$}] (b1) at (2,.75) {};
\node[label={above:$b_{0}$}] (b0) at (-2,.75) {};
\end{scope}
\end{scope}

\begin{scope}[line width=0.8pt]
\begin{scope}[line width=2pt]
\draw (a) edge (a1);
\draw (a) edge (a0);
\draw (b) edge (b1);
\draw (b) edge (b0);
\end{scope}
\draw (a1)edge(b0);
\draw (a0)edge(b1);
\end{scope}

\begin{scope}[inner/.style={circle,draw=black!50,fill=gray!30,thick,inner sep=0.5pt,minimum size=6mm},
  outer/.style={circle,draw=black!70,fill=gray!10,thick,inner sep=-2.5pt},xshift=5cm]
\node[outer] (T) at (5,1.25) {
\begin{tikzpicture}[node distance=0.1cm]
\node[inner] (t1) {$b_{0}$};
\node[inner,right=of t1] (t2) {$a_{\emptyset}$};
\node[inner,below=of t1] (t3) {$b_{1}$};
\node[inner,right=of t3] (t4) {$a_{0}$};
\end{tikzpicture}
};

\node[outer] (S) at (2.5,0) {
\begin{tikzpicture}[node distance=0.1cm]
\node[inner] (s1) {$b_{0}$};
\node[inner,right=of s1] (s2) {$a_{\emptyset}$};
\node[inner,below=of s1] (s3) {$b_{1}$};
\node[inner,right=of s3] (s4) {$b_{\emptyset}$};
\end{tikzpicture}
};

\node[outer] (R) at (5,-1.25) {
\begin{tikzpicture}[node distance=0.1cm]
\node[inner] (r1) {$b_{0}$};
\node[inner,right=of r1] (r2) {$a_{\emptyset}$};
\node[inner,below=of r1] (r3) {$b_{1}$};
\node[inner,right=of r3] (r4) {$a_{1}$};
\end{tikzpicture}
};

\node[outer] (Q) at (0,0) {
\begin{tikzpicture}[node distance=0.1cm]
\node[inner] (q1) {};
\node[inner,right=of q1] (q2) {$a_{\emptyset}$};
\node[inner,below=of q1] (q3) {};
\node[inner,right=of q3] (q4) {$b_{\emptyset}$};
\end{tikzpicture}
};

\draw[ultra thick,color=black!70] (S) edge (T);
\draw[ultra thick,color=black!70] (S) edge (R);
\draw[ultra thick,color=black!70] (S) edge (Q);

\end{scope}

\end{tikzpicture}
 \caption{The Joret-Micek-Wiechert Construction for $n=1$}
 \label{fig:JMW}
\end{figure}

\begin{figure}
 \centering
 \begin{tikzpicture}

\begin{scope}[nodes={circle,fill=white,draw=black,line width=1pt,inner sep=0pt, minimum size=7pt},yshift=5cm]
	\begin{scope}[yscale=-1, yshift=0.5cm]
		\node[label={below:$a_{\emptyset}$}] (a) at (0,0) {};
		\node[label={below:$a_{1}$}] (a1) at (2,.75) {};
		\node[label={below:$a_{10}$}] (a10) at (0.5*2,1.5*.75) {};
		\node[label={below:$a_{11}$}] (a11) at (1.5*2,1.5*.75) {};
		\node[label={below:$a_{0}$}] (a0) at (-2,.75) {};
		\node[label={below:$a_{00}$}] (a00) at (-2*1.5,1.5*0.75) {};
		\node[label={below:$a_{01}$}] (a01) at (-2*0.5,1.5*0.75) {};
	\end{scope}
	\begin{scope}[yshift=0.5cm]
		\node[label={above:$b_{\emptyset}$}] (b) at (0,0) {};
		\node[label={above:$b_{1}$}] (b1) at (2,.75) {};
		\node[label={above:$b_{10}$}] (b10) at (0.5*2,1.5*.75) {};
		\node[label={above:$b_{11}$}] (b11) at (1.5*2,1.5*.75) {};
		\node[label={above:$b_{0}$}] (b0) at (-2,.75) {};
		\node[label={above:$b_{00}$}] (b00) at (-2*1.5,1.5*0.75) {};
		\node[label={above:$b_{01}$}] (b01) at (-2*0.5,1.5*0.75) {};
	\end{scope}
\end{scope}

\begin{scope}[line width=0.8pt]
	\begin{scope}[line width=2pt]
		\draw (a) edge (a1);
		\draw (a) edge (a0);
		\draw (a1) edge (a11);
		\draw (a1) edge (a10);
		\draw (a0) edge (a01);
		\draw (a0) edge (a00);
		\draw (b) edge (b1);
		\draw (b) edge (b0);
		\draw (b1) edge (b11);
		\draw (b1) edge (b10);
		\draw (b0) edge (b01);
		\draw (b0) edge (b00);
	\end{scope}
	\draw (a1)edge(b0);
	\draw (a0)edge(b1);
	\draw (a11)edge(b10);
	\draw (a10)edge(b11);
	\draw (a01)edge(b00);
	\draw (a00)edge(b01);
\end{scope}

\begin{scope}[scale=0.85,inner/.style={circle,draw=black!50,fill=gray!30,thick,inner sep=0.2pt,minimum size=6mm},
  outer/.style={circle,draw=black!70,fill=gray!10,thick,inner sep=-2.5pt},xshift=-7.5cm]

\begin{scope}[xshift=10cm]
\node[outer] (T1) at (5,1.25) {
\begin{tikzpicture}[node distance=0.1cm]
\node[inner] (t1) {$b_{10}$};
\node[inner,right=of t1] (t2) {$a_{1}$};
\node[inner,below=of t1] (t3) {$b_{11}$};
\node[inner,right=of t3] (t4) {$a_{10}$};
\end{tikzpicture}
};

\node[outer] (S1) at (2.5,0) {
\begin{tikzpicture}[node distance=0.1cm]
\node[inner] (s1) {$b_{10}$};
\node[inner,right=of s1] (s2) {$a_{1}$};
\node[inner,below=of s1] (s3) {$b_{11}$};
\node[inner,right=of s3] (s4) {$b_{1}$};
\end{tikzpicture}
};

\node[outer] (R1) at (5,-1.25) {
\begin{tikzpicture}[node distance=0.1cm]
\node[inner] (r1) {$b_{1}$};
\node[inner,right=of r1] (r2) {$a_{1}$};
\node[inner,below=of r1] (r3) {$b_{11}$};
\node[inner,right=of r3] (r4) {$a_{11}$};
\end{tikzpicture}
};

\node[outer] (Q1) at (0,0) {
\begin{tikzpicture}[node distance=0.1cm]
\node[inner] (q1) {$a_{\emptyset}$};
\node[inner,right=of q1] (q2) {$a_{1}$};
\node[inner,below=of q1] (q3) {$b_{0}$};
\node[inner,right=of q3] (q4) {$b_{1}$};
\end{tikzpicture}
};

\draw[ultra thick,color=black!70] (S1) edge (T1);
\draw[ultra thick,color=black!70] (S1) edge (R1);
\draw[ultra thick,color=black!70] (S1) edge (Q1);
\end{scope}

\begin{scope}[xscale=-1,xshift=-5cm]
\node[outer] (T0) at (5,1.25) {
\begin{tikzpicture}[node distance=0.1cm]
\node[inner] (t1) {$b_{00}$};
\node[inner,left=of t1] (t2) {$a_{0}$};
\node[inner,below=of t1] (t3) {$b_{01}$};
\node[inner,left=of t3] (t4) {$a_{00}$};
\end{tikzpicture}
};

\node[outer] (S0) at (2.5,0) {
\begin{tikzpicture}[node distance=0.1cm]
\node[inner] (s1) {$b_{00}$};
\node[inner,left=of s1] (s2) {$a_{0}$};
\node[inner,below=of s1] (s3) {$b_{01}$};
\node[inner,left=of s3] (s4) {$b_{0}$};
\end{tikzpicture}
};

\node[outer] (R0) at (5,-1.25) {
\begin{tikzpicture}[node distance=0.1cm]
\node[inner] (r1) {$b_{00}$};
\node[inner,left=of r1] (r2) {$a_{0}$};
\node[inner,below=of r1] (r3) {$b_{01}$};
\node[inner,left=of r3] (r4) {$a_{01}$};
\end{tikzpicture}
};

\node[outer] (Q0) at (0,0) {
\begin{tikzpicture}[node distance=0.1cm]
\node[inner] (q1) {$a_{\emptyset}$};
\node[inner,left=of q1] (q2) {$a_{0}$};
\node[inner,below=of q1] (q3) {$b_1$};
\node[inner,left=of q3] (q4) {$b_{0}$};
\end{tikzpicture}
};

\draw[ultra thick,color=black!70] (S0) edge (T0);
\draw[ultra thick,color=black!70] (S0) edge (R0);
\draw[ultra thick,color=black!70] (S0) edge (Q0);
\end{scope}

\node[outer] (T) at (7.5,1.25) {
\begin{tikzpicture}[node distance=0.1cm]
\node[inner] (t1) {$b_{\emptyset}$};
\node[inner,right=of t1] (t2) {$a_{\emptyset}$};
\node[inner,below=of t1] (t3) {$b_{0}$};
\node[inner,right=of t3] (t4) {$b_{1}$};
\end{tikzpicture}
};

\node[outer] (Q) at (7.5,-1.25) {
\begin{tikzpicture}[node distance=0.1cm]
\node[inner] (t1) {$b_{\emptyset}$};
\node[inner,right=of t1] (t2) {$a_{\emptyset}$};
\node[inner,below=of t1] (t3) {};
\node[inner,right=of t3] (t4) {};
\end{tikzpicture}
};

\draw[ultra thick,color=black!70] (Q0) edge (T);
\draw[ultra thick,color=black!70] (Q1) edge (T);
\draw[ultra thick,color=black!70] (Q) edge (T);
\end{scope}

\end{tikzpicture}
 \caption{The Joret-Micek-Wiechert construction for $n=2$}
 \label{fig:JMW2}
\end{figure}
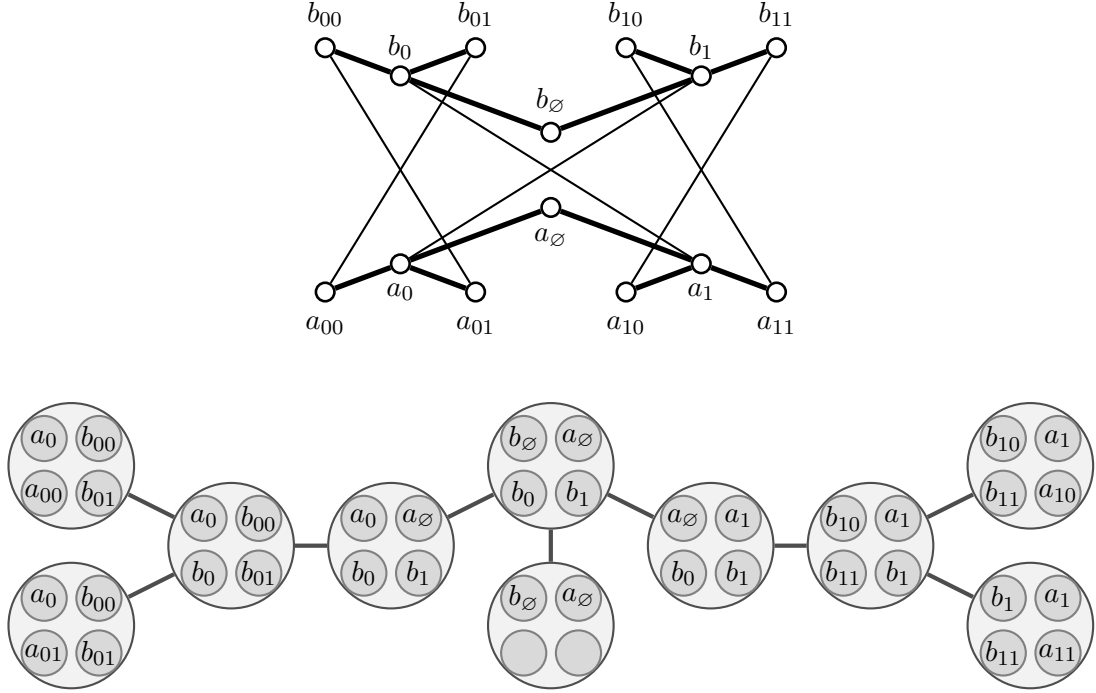

Now it is an easy exercise to verify by induction the following
properties of the family $\{P_n:n\ge0\}$:
\begin{enumerate}
\item The tree-width of the cover graph of $P_n$ is (at most)~$3$, and
\item $P_n$ has a tree-decomposition of width~$3$ for which there is a
  leaf (bag) $u$ in the host tree for which the set of elements of
  $P_n$ appearing in $u$ is precisely $\{a_\emptyset,b_\emptyset\}$.
\end{enumerate}
To complete the proof of Theorem~\ref{thm:ldim-tw}, we now prove the
following claim.

\medskip
\noindent
\textbf{Claim.}  Let $d\ge2$. If $n\ge\BTRam(1,3,d^2)$, then
$\ldim(P_n)>d$.

\begin{proof}
  Let $d\ge2$ and $n\ge\BTRam(1,3,d^2)$. We assume $\ldim(P_n)\le d$
  and argue to a contradiction. Let $\cgL=\{L_1,L_2,\dots,L_t\}$ be a
  local realizer for $P_n$ with $\mu(\cgL)\le d$. We use $\cgL$ to
  construct a coloring $\varphi$ of the strong copies of $T_1$ in
  $T_n$. A strong copy of $T_1$ consists of three binary strings
  $x,y,z$ with $x$ an initial segment of both $y$ and $z$ and, if $x$
  is a string of length $s$, $y(s+1)=0$ while $z(s+1)=1$. In
  particular, this implies that $a_x\parallel b_z$ in $P_n$.

  We then define a $d^2$-coloring of the strong copies of $T_1$ in
  $T_n$ by setting $\varphi(\{x,y,z\})=(\alpha,\beta)$ where $\alpha$
  and $\beta$ are defined as follows: Let $i$ be the least positive
  integer for which $a_x>b_z$ in $L_i$. Then $(\alpha,\beta)$ is the
  pair for which occurrence $\alpha$ of $a_x$ is in $L_i$ and
  occurrence $\beta$ of $b_z$ is in $L_i$. Since $\alpha,\beta\in[d]$,
  the function $\varphi$ uses at most $d^2$ colors. We pause to note
  that the element $y$ plays no role in this argument. Everything to
  follow depends only on $x$ and $z$.

  From Theorem~\ref{thm:ramsey-bt}, there is a subtree $\Lambda$ of
  $T_n$ and a color $(\alpha,\beta)$ so that $\Lambda$ is a strong
  copy of $T_3$ and $\varphi$ maps every strong copy of $T_1$ in
  $\Lambda$ to color $(\alpha,\beta)$.  Relabel the elements of
  $\Lambda$ so that they match the standard notation for $T_3$.

  Now consider the $3$-element subset $\{\emptyset,0,111\}$ in $T_3$,
  which is a strong copy of $T_1$.  This copy is assigned color
  $(\alpha,\beta)$ so there is some $L_i\in\cgL$ so that
  $a_\emptyset>b_{111}$ in $L_i$, where occurrence $\alpha$ of
  $a_\emptyset$ is in $L_i$ and occurrence $\beta$ of $b_{111}$ is in
  $L_i$. Next consider the $3$-element subset $\{\emptyset, 0, 101\}$
  which is also a strong copy of $T_1$. Since we already know that
  occurrence $\alpha$ of $a_\emptyset$ is in $L_i$, it follows that
  occurrence $\beta$ of $b_{101}$ is also in $L_i$.

  We then consider the subsets $\{10,100,101\}$ and $\{11,110,111\}$.
  Both are strong copies of $T_1$. Since we already know that
  occurrence $\beta$ of $b_{101}$ and occurrence $\beta$ of $b_{111}$
  is in $L_i$, we conclude that $a_{10}>b_{101}$ in $L_i$ and
  $a_{11}>b_{111}$ in $L_i$. This is impossible since $a_{10}<b_{111}$
  in $P_n$ and $a_{11}<b_{101}$ in $P_n$.  The contradiction completes
  the proof.
\end{proof}

\section{Summary Listing of Open Problems}

For the convenience of readers, we gather here a listing of open
problems concerning Boolean dimension and local dimension.

\begin{enumerate}
\item For a positive integer $w$, what is the maximum value of the
  Boolean dimension of a poset whose width is~$w$?
\item For a positive integer $w$, what is the maximum value of the
  local dimension of a poset whose width is~$w$?
\item For a non-negative integer $n$, what is the maximum value of the
  Boolean dimension of a poset consisting of an antichain and $n$
  additional points?
\item Is there a constant $d_0$ such that every planar poset has
  Boolean dimension at most $d_0$?
\item Is there a constant $d_0$ such that every poset with a planar
  cover graph has Boolean dimension at most $d_0$?
\item If a planar poset has large dimension, must it contain a large
  standard example?
\item If a planar poset has large Boolean dimension, must it contain a
  large standard example?
\item If a planar poset has large local dimension, must it contain a large
standard example?
\item For an integer $d\ge4$, what is the maximum local dimension of a
  disconnected poset in which each component has local dimension at
  most~$d$?  \textit{Note.} The answer is either $d$, $d+1$ or $d+2$.
\item What is the maximum amount the Boolean dimension of a poset can
  drop when a single point is removed?  \textit{Note.} The answer is
  either $1$, $2$ or $3$.
\item What is the Boolean dimension and the local dimension of
  $\mathbf{2}^d$?
\end{enumerate}

\section{Acknowledgments}

Our work has benefited considerably through collaboration, and a touch
of competition, with our colleagues Stefan Felsner, Gwena\"{e}l Joret,
Tam\'{a}s M\'{e}sz\'{a}ros, Piotr Micek and Bartosz Walczak. Smith was
supported in part by NSF-DMS grant 1344199.

\end{document}